\documentclass[lang=en]{elegantpaper}
\usepackage{color}
\usepackage{enumitem}
\usepackage{hyperref}
\usepackage{tikz}
\usepackage{tikz-cd}
\usetikzlibrary{arrows.meta,decorations.markings}

\usepackage{amsthm}
\theoremstyle{plain}
\newtheorem{numberedcorollary}[theorem]{Corollary}
\usepackage{cleveref}

\NewDocumentCommand{\inorm}{o m}{
	\IfValueTF{#1}{
		\mathrm{N}(\mathfrak{#2})^{#1}
	}{
		\mathrm{N}(\mathfrak{#2})
	}
}
\NewDocumentCommand{\iphi}{ e{^} m }{%
  \IfValueTF{#1}{%
	\phi^*(\mathfrak{#2})
  }{%
    \phi(\mathfrak{#2})%
  }%
}
\newcommand{\imu}[1]{\mu(\mathfrak{#1})}
\newcommand{\mmu}[1]{\widetilde{\mu}(\mathfrak{#1})}

\usepackage{xparse}
\usepackage{mathtools}

\NewDocumentCommand{\ssum}{
	e{_}
}{
	\IfValueTF{#1} {
		\sum_{\substack{#1}}
	}{
		\sum
	}
}

\renewcommand{\mod}[1]{{\ifmmode\text{\rm\ (mod~$#1$)}\else\discretionary{}{}{\hbox{ }}\rm(mod~$#1$)\fi}}

\newcommand{\bigO}{\mathsf{O}}
\newcommand{\littleo}{\mathsf{o}}

\def\a{\mathfrak{a}}
\def\f{\mathfrak{f}}
\def\p{\mathfrak{p}}
\def\a{\mathfrak{a}}
\def\m{\mathfrak{m}}
\def\s{\mathfrak{s}}
\def\t{\mathfrak{t}}

\def\norm{\mathrm{N}}

\def\to{\rightarrow}

\def\1{1\!\!1}

\DeclareMathOperator{\Tr}{Tr}

\title{Nonvanishing of ray class $L$-functions}
\author{Thurman Ye\qquad Xu Zhuang}
\date{}

\begin{document}
\maketitle
\begin{abstract}
We study the nonvanishing of central values of ray class \(L\)-functions
over a fixed imaginary quadratic field \(K\). We prove that, as
\(\norm(\mathfrak f)\) tends to infinity, at least a proportion
\(1/3-\littleo_K(1)\) of the primitive ray class characters \(\chi\)
modulo \(\mathfrak f\) satisfy \(L(1/2,\chi)\neq 0\). Our proof adapts
the classical mollifier method to the ray class setting and establishes
asymptotic formulas for the first and second mollified moments.
\end{abstract}
\section{Introduction}

The nonvanishing of the central values of \(L\)-functions is of great importance in number theory. For the classical case, Chowla conjectured in \cite{Chowla} that \( L(1/2, \chi) \neq 0 \) over all primitive Dirichlet characters \(\chi\). Balasubramanian and Murty \cite[p.~568]{BalasubramanianMurty1992} proved the first positive proportion of nonvanishing for Dirichlet characters;
Iwaniec and Sarnak \cite{Iwaniec-Sarnak} significantly improved the proportion to \(1/3\). They obtained their results by computing the first two mollified moments of \(L(1/2,\chi)\). Subsequent work has raised the proportion through a combination of more sophisticated mollifiers, sharper estimates for the corresponding moments, and restrictions to certain moduli. For a partial survey of these results, see \cite{Bui2012},  \cite{QinWu2025}, \cite{KhanMilicevicNgo2022}, and \cite{durkan2026centralnonvanishingdirichletlfunctions}.

In this paper we study the nonvanishing problem for characters over number fields. Fix a number field \(K\) and integral ideal \(\mathfrak{f}\), and consider \(\operatorname{Cl}_K(\mathfrak f)\),  the ray
class group modulo \(\mathfrak f\). Associated to
a character \(\chi\) of \(\operatorname{Cl}_K(\mathfrak f)\) is the Hecke
\(L\)-function \( L(s, \chi) \). Assuming that \(K\) is imaginary quadratic, our main result is to recover the Iwaniec-Sarnak proportion of nonvanishing of \(L(\frac{1}{2},\chi) \) in the limit \( \norm(\mathfrak{f}) \to \infty \).

\begin{theorem}\label{thm:main}
Let \(K\) be an imaginary quadratic field. As \( \norm(\mathfrak{f}) \to \infty \) through integral ideals \( \mathfrak{f} \), at least \( (\frac{1}{3} - \mathsf{o}_K(1))h_K^*(\mathfrak f)\) primitive ray class characters modulo \(\mathfrak{f} \) satisfy \( L(\frac{1}{2}, \chi) \neq 0 \).
\end{theorem}

Our work adapts the one-piece mollifier method of Iwaniec and Sarnak to the imaginary quadratic setting. Gao and Zhao \cite{GaoZhao2020QuadraticHecke} obtained quantitative nonvanishing results for quadratic Hecke \(L\)-functions over \(\mathbb Q(i)\) and \(\mathbb Q(\sqrt{-3})\). More recently, David, de Faveri, Dunn, and Stucky \cite{david2026nonvanishingcubicheckelfunctions} proved unconditional positive-proportion nonvanishing for a family of cubic Hecke \(L\)-functions over \(\mathbb Q(\sqrt{-3})\). Castillo, de Faveri, and Dunn \cite{castillo2026nonvanishingquarticheckelfunctions} established an analogous result for a family of quartic Hecke \(L\)-functions over \(\mathbb Q(i)\), and also treated twists of fixed infinity type. Diaconu, Ion, Pa\c{s}ol, and Popa \cite{diaconu2026secondmomentnonvanishingcentral} established unconditional positive-proportion nonvanishing for families of \(r\)-th order Hecke \(L\)-functions over number fields containing the \(2r\)-th roots of unity, for each fixed \(r\ge3\). Their results concern families indexed by squarefree and \(r\)-th power-free ideals. These works study prescribed families of fixed-order characters or their twists as the indexing ideals vary, whereas we average over all primitive ray class characters modulo a given ideal, without restricting their order. The resulting nonvanishing proportions are therefore not directly comparable.

A closer comparison is provided by Gao and Zhao \cite{GaoZhao2021UnitaryHecke}, who studied primitive ray class characters over \(\mathbb Q(i)\) with conductor \(\mathfrak f=((1+i)^3q)\), where \(q\in\mathbb Z[i]\) and \((q,2)=1\). Their Corollaries~1.2 and~1.4 give a nonvanishing proportion \(\gg1/\log\norm(\mathfrak f)\) unconditionally and at least \(1/2+\littleo(1)\) under the generalized Riemann hypothesis. Our theorem improves their unconditional bound to \(1/3-\littleo(1)\) and extends the result to arbitrary imaginary quadratic fields and arbitrary integral moduli.

A little bit of class field theory shows that ray class characters are the natural extension of Dirichlet characters to any number field. Their \(L\)-functions also satisfy a functional equation (Lemma~\ref{thm:functional-equation}), so it is natural to expect \Cref{thm:main} to hold for all number fields \(K\). The difficulty in extending beyond \(K=\mathbb{Q}\) and \(K=\mathbb{Q}(\sqrt{-d})\) lies in part in the infinite unit groups of all other number fields. These make it possible for the number of ray class characters modulo \(\mathfrak{f} \) to be much smaller than \(\norm(\mathfrak{f}) \). See Lemmas \ref{lem:ray-class-exact-sequence} and \ref{lem:primitive-family-size}. It would be interesting to see if some restrictions can be put on general \(K\) and \(\mathfrak{f}\) to recover a nonzero proportion of nonvanishing. The estimates in Sections \ref{sec:lattice bounds} and \ref{sec:character sums} also use the imaginary quadratic assumption; it would be nice to also see these extended to arbitrary number fields.

We have already mentioned the application of the first two mollified moments to obtain \Cref{thm:main}. These moments might be of independent interest, so we conclude this introduction by recording them here. Let \(\mu\) be the M\"obius
function on integral ideals. Associated to a ray class character \(\chi\) modulo \(\mathfrak{f}\) is our mollifier
\begin{equation}\label{eq:mollifier}
\mathcal M(\chi)
=\sum_{\substack{\norm(\mathfrak m)\le M\\
(\mathfrak m,\mathfrak f)=1}}
\frac{\widetilde{\mu}(\mathfrak m)\chi(\mathfrak m)}
{\norm(\mathfrak m)^{1/2}}
\end{equation}
where
\[
	\widetilde{\mu}(\mathfrak{m}) = \mu(\mathfrak{m})\max \biggl( 1-\frac{\log\norm(\mathfrak m)}{\log M},0 \biggr).
\]

Throughout the paper we use \(\theta>0\) to parameterize the length of the mollifier as \( M = \norm(\mathfrak{f})^\theta \). The value of \(\theta\) is fixed as \( \norm(\mathfrak{f}) \) varies.

\begin{proposition} \label{prop:first mollified moment}
    Let \(K\) be an imaginary quadratic field. For \( 0 < \theta < 1/2 \) we have as \( \norm(\mathfrak{f}) \to \infty \) that
    \[
    \sideset{}{^*}\sum_{\chi\bmod\mathfrak f} \mathcal M(\chi) L\left(\frac{1}{2},\chi\right)
    =(1+\mathsf{o}_{K,\theta}(1)) h_K^*(\mathfrak{f}).
    \]
    The asterisk restricts the sum to primitive ray class characters modulo \(\mathfrak f\).
\end{proposition}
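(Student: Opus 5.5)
We follow the Iwaniec--Sarnak strategy, adapted to ideals of the imaginary quadratic field \(K\). For a primitive ray class character \(\chi\) modulo \(\mathfrak f\), we start from the approximate functional equation furnished by Lemma~\ref{thm:functional-equation}:
\[
L\Bigl(\tfrac12,\chi\Bigr)=\sum_{\mathfrak a}\frac{\chi(\mathfrak a)}{\norm(\mathfrak a)^{1/2}}V\Bigl(\frac{\norm(\mathfrak a)}{X}\Bigr)+\varepsilon(\chi)\sum_{\mathfrak a}\frac{\overline{\chi}(\mathfrak a)}{\norm(\mathfrak a)^{1/2}}V\Bigl(\frac{\norm(\mathfrak a)}{Y}\Bigr),
\]
with \(XY\asymp \norm(\mathfrak f)|d_K|\). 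Since \(K\) is imaginary quadratic, there is no infinite-place sign ambiguity beyond a fixed gamma factor \(\Gamma(s+|\text{infinity type}|/2)\). Here \(\varepsilon(\chi)\) is the root number, which is proportional to a Gauss sum \(\tau(\chi)/\norm(\mathfrak f)^{1/2}\). Multiplying by \(\mathcal M(\chi)\) and summing over primitive \(\chi\) produces a \emph{diagonal part} coming from the first sum and a \emph{dual part} coming from the root-number term.

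For the diagonal part, we detect primitivity by Möbius inversion over divisors \(\mathfrak g\mid\mathfrak f\), which expresses \(\sideset{}{^*}\sum\) through full character sums over \(\operatorname{Cl}_K(\mathfrak g)\). Orthogonality then gives \(\sum_{\chi\bmod\mathfrak g}\chi(\mathfrak a)\overline{\chi}(\mathfrak b)=h_K(\mathfrak g)\) whenever \(\mathfrak a\mathfrak b^{-1}\) is principal and generated by an element \(\equiv 1\) modulo \(\mathfrak g\) (up to the finitely many units), and zero otherwise. We apply this with \(\mathfrak b=\mathfrak m^{-1}\), i.e.\ to the products \(\mathfrak m\mathfrak a\). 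The term \(\mathfrak m\mathfrak a=(1)\), that is \(\mathfrak m=\mathfrak a=\mathcal O_K\), contributes \(\widetilde\mu(\mathcal O_K)V(1/X)\,h_K^*(\mathfrak f)=(1+\littleo(1))h_K^*(\mathfrak f)\), using Lemma~\ref{lem:primitive-family-size} to compare \(h_K^*\) with the \(h_K(\mathfrak g)\).

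The off-diagonal terms require \(\mathfrak m\mathfrak a=(\alpha)\) with \(\alpha\equiv 1 \pmod{\mathfrak g}\) and \(\alpha\neq 1\). Counting such \(\alpha\) of norm up to \(MX\) is a lattice-point problem in \(\mathbb C\): the number of lattice points of the shifted lattice \(1+\mathfrak g\) inside a disc of radius \(R\) is \(\bigO(R^2/\norm(\mathfrak g)+R/\norm(\mathfrak g)^{1/2}+1)\), excluding \(\alpha=1\) itself. This is presumably the content of Section~\ref{sec:lattice bounds}. Weighting by \(\norm(\mathfrak a\mathfrak m)^{-1/2}\) and summing over divisors, the off-diagonal total is \(\ll h_K(\mathfrak f)\,\norm(\mathfrak f)^{-1+\epsilon}(MX)^{1/2+\epsilon}\) plus smaller terms. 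This is \(\littleo(h_K^*(\mathfrak f))\) provided \(X\) is chosen slightly smaller than \(\norm(\mathfrak f)^{1-\theta}\), or provided we balance \(X\) against \(Y\).

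The dual part is where the condition \(\theta<1/2\) enters, and we expect it to be the main obstacle. Opening \(\varepsilon(\chi)\overline{\chi}(\mathfrak a)\chi(\mathfrak m)\) gives sums of the form \(\norm(\mathfrak f)^{-1/2}\sideset{}{^*}\sum_\chi\tau(\chi)\chi(\mathfrak m)\overline\chi(\mathfrak a)\). Expanding the Gauss sum as an additive character sum over \(\mathcal O_K/\mathfrak f\) twisted by the different, and executing the character sum (again via Möbius over \(\mathfrak g\mid\mathfrak f\)), reduces these to exponential sums \(e(\Tr(\beta x))\) evaluated at elements \(x\equiv m\bar a\) modulo \(\mathfrak f\). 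These are Kloosterman/Ramanujan-type sums over \(\mathcal O_K/\mathfrak g\). Their trivial bound, which is essentially \(\norm(\mathfrak f)^{1/2+\epsilon}h_K(\mathfrak f)/\norm(\mathfrak f)\) per pair \((\mathfrak m,\mathfrak a)\), gives a total of
\[
\ll h_K(\mathfrak f)\,\norm(\mathfrak f)^{-1/2+\epsilon}(MY)^{1/2}.
\]
With \(Y\asymp \norm(\mathfrak f)/X\) and \(X=\norm(\mathfrak f)^{1-\theta-\delta}\), the exponent is \(-1/2+(\theta+\theta+\delta)/2<0\) exactly when \(2\theta<1\). This requires character-sum estimates uniform over non-principal-ideal classes: we pick representatives of the class group and handle \(\mathfrak a\) in each class separately, which is where the estimates of Section~\ref{sec:character sums} would be invoked. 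Finally, we check that \(X\) can be chosen so that both the diagonal off-terms and the dual part are \(\littleo(h_K^*(\mathfrak f))\), the constraint being satisfiable for \(\theta<1/2\). This yields the proposition.
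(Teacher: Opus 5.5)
Your proposal is correct and follows essentially the same route as the paper. It uses the approximate functional equation, primitive orthogonality via M\"obius inversion over $\mathfrak s\mathfrak t=\mathfrak f$, and the single diagonal term $\mathfrak a=\mathfrak m=\mathcal O_K$ giving $h_K^*(\mathfrak f)$. It then bounds the remaining terms by a lattice count in $1+\mathfrak t$ (where excluding $\alpha=1$ has to be used in the form $\norm(\mathfrak t)\le 4\norm(\alpha)$, so the $+1$ in your count does not survive the sum over $\mathfrak t\mid\mathfrak f$). The dual term is handled by the orthogonality-based bound $\ll d(\mathfrak f)\norm(\mathfrak f)$ for $\sideset{}{^*}\sum_{\chi}\chi(\mathfrak b)\tau(\chi)$.

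There are two small differences from the paper, neither of which changes the constraint $\theta<1/2$. You take unbalanced cutoffs $X=\norm(\mathfrak f)^{1-\theta-\delta}$, $Y\asymp\norm(\mathfrak f)^{\theta+\delta}$, whereas the paper uses the symmetric choice $X=Y\asymp\norm(\mathfrak f)^{1/2}$. You also bound the off-diagonal after grouping $\mathfrak J=\mathfrak a\mathfrak m$, which is sharper than the paper's bound $M^{3/2}\norm(\mathfrak f)^{1/4+\varepsilon}$.
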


\begin{proposition} \label{prop:second mollified moment}
With the same setup as in Proposition~\ref{prop:first mollified moment}, we have
\[
    \sideset{}{^*}\sum_{\chi\bmod\mathfrak f}
    \left|\mathcal M(\chi)L\!\left(\frac12,\chi\right)\right|^2
    =\left(1+\frac1\theta+\littleo_{K,\theta}(1)\right)
    h_K^*(\mathfrak f).
\]
\end{proposition}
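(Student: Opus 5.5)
We will follow the Iwaniec--Sarnak strategy, adapted to ideals of \(K\). The starting point is the approximate functional equation coming from Lemma~\ref{thm:functional-equation}. For a primitive \(\chi\) modulo \(\mathfrak f\) with root number \(\epsilon(\chi)\), we write
\[
|L(\tfrac12,\chi)|^2=2\sum_{\mathfrak a,\mathfrak b}\frac{\chi(\mathfrak a)\overline\chi(\mathfrak b)}{\norm(\mathfrak a\mathfrak b)^{1/2}}\,V\!\Bigl(\frac{\norm(\mathfrak a\mathfrak b)}{|d_K|\norm(\mathfrak f)}\Bigr).
\]
This form has no root number, since \(|\epsilon(\chi)|=1\), and the weight \(V\) comes from \(\Gamma(s+\tfrac12)^2\) for the complex place. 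Multiplying by \(|\mathcal M(\chi)|^2\), we must evaluate
\[
\sideset{}{^*}\sum_{\chi}\chi(\mathfrak a\mathfrak m_1)\overline\chi(\mathfrak b\mathfrak m_2).
\]
We do this by M\"obius inversion over divisors \(\mathfrak g\mid\mathfrak f\), writing the primitive sum in terms of full orthogonality on \(\operatorname{Cl}_K(\mathfrak g)\). Orthogonality detects the condition \(\mathfrak a\mathfrak m_1\sim\mathfrak b\mathfrak m_2\) in \(\operatorname{Cl}_K(\mathfrak g)\). Since \(K\) is imaginary quadratic, the unit group is finite. Via Lemma~\ref{lem:ray-class-exact-sequence}, this condition means \(\mathfrak a\mathfrak m_1=(\alpha)\mathfrak c\) and \(\mathfrak b\mathfrak m_2=(\beta)\mathfrak c\) with \(\alpha\equiv\beta\) modulo \(\mathfrak g\), up to a bounded number of units. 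We then split into the diagonal \(\mathfrak a\mathfrak m_1=\mathfrak b\mathfrak m_2\) and the off-diagonal terms. The family size \(h_K^*(\mathfrak f)\) arises from Lemma~\ref{lem:primitive-family-size}.

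\textbf{Diagonal.} The diagonal contribution equals \(h_K^*(\mathfrak f)\) times
\[
\sum_{\mathfrak m_1,\mathfrak m_2}\frac{\widetilde\mu(\mathfrak m_1)\widetilde\mu(\mathfrak m_2)}{\norm(\mathfrak m_1\mathfrak m_2)^{1/2}}\sum_{\mathfrak a\mathfrak m_1=\mathfrak b\mathfrak m_2}\frac{1}{\norm(\mathfrak a\mathfrak b)^{1/2}}V(\cdot).
\]
Writing \(\mathfrak a=\mathfrak m_2\mathfrak n/\mathfrak d\) and \(\mathfrak b=\mathfrak m_1\mathfrak n/\mathfrak d\) with \(\mathfrak d=\gcd(\mathfrak m_1,\mathfrak m_2)\), this becomes a contour integral. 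The integrand involves \(\zeta_K(1+2s)\) (with coprimality to \(\mathfrak f\), which costs only \(\littleo(1)\) via \(\phi(\mathfrak f)/\norm(\mathfrak f)\) factors handled as in the classical case) and Mellin transforms of the smoothing. Shifting contours picks up the double pole at \(s=0\), arising from \(\zeta_K(1+2s)\) together with the \(\log\) coming from \(V\). The mollifier sum is then evaluated via the standard Dedekind-zeta version of the Iwaniec--Sarnak/Conrey computation. This uses \(\sum_{\norm\mathfrak m\le x}\mu(\mathfrak m)/\norm(\mathfrak m)\cdot(\dots)\) and the residue of \(\zeta_K\) at \(1\). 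The residue of \(\zeta_K\) cancels between numerator and denominator, leaving \(\int_0^1\bigl(P(x)^2/\theta+P'(x)^2\bigr)\)-type expressions with \(P(x)=x\). For the linear mollifier \(1-\log\norm\mathfrak m/\log M\) this gives exactly \(1+1/\theta\). The \(\log(|d_K|\norm\mathfrak f)\) from \(V\) produces the \(1/\theta\), and the \(P'^2\) term produces the \(1\).

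\textbf{Off-diagonal.} This is the main obstacle. We need to show that the terms with \(\mathfrak a\mathfrak m_1\ne\mathfrak b\mathfrak m_2\) but congruent in \(\operatorname{Cl}_K(\mathfrak g)\) contribute \(\littleo(h_K^*(\mathfrak f))\) when \(\theta<1/2\). The condition becomes \(\alpha\equiv\beta\) modulo \(\mathfrak g\) with \(\alpha\ne\beta\) in the lattice \(\mathfrak c^{-1}\mathfrak a\mathfrak m_1\), with \(\norm(\alpha)\approx\norm\mathfrak a\mathfrak m_1/\norm\mathfrak c\). First we would decompose into dyadic ranges. Then we count pairs of lattice points in \(\mathbb C\) in discs of radius \(\sqrt{A M}\) and \(\sqrt{BM}\) that differ by a nonzero element of \(\mathfrak g\). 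This uses the lattice-point bounds of Section~\ref{sec:lattice bounds} (this is where the finiteness of units and the two-dimensional geometry are used). It yields a count \(\ll (AB)^{1/2}M^{1+\varepsilon}\cdot M/\norm\mathfrak g+\dots\), with \(AB\le\norm(\mathfrak f)^{1+\varepsilon}\). Summed with weights \(1/\sqrt{AB\,\norm(\mathfrak m_1\mathfrak m_2)}\), this gives \(\ll M\norm(\mathfrak f)^{1/2+\varepsilon}\cdot\norm(\mathfrak f)^{1/2}/\norm(\mathfrak g)\). After the M\"obius sum over \(\mathfrak g\mid\mathfrak f\), we get \(\norm(\mathfrak f)^{1/2+\theta+\varepsilon}\)-type bounds. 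These are \(\littleo(\norm\mathfrak f)\) exactly when \(\theta<1/2\). Small \(\mathfrak g\) will be handled with the trivial bound together with the character-sum estimates of Section~\ref{sec:character sums}. We expect the delicate points to be handling the ideal-class factor \(\mathfrak c\), and keeping the \(\varepsilon\)-losses and the divisor-sum over \(\mathfrak g\) under control uniformly in \(\mathfrak f\). The latter may require separating \(\mathfrak g\) with \(\norm\mathfrak g\ge\norm(\mathfrak f)^{1-\delta}\).
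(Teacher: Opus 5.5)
Your outline follows the paper's route. You use the approximate functional equation without a root number, M\"obius inversion over $\mathfrak g\mid\mathfrak f$ to reach full orthogonality, the double pole of $\zeta_{K,\mathfrak f}(1+2s)/s$ followed by a Conrey-type evaluation of the mollifier sums for the diagonal, and planar lattice counting averaged over $\mathfrak m_1,\mathfrak m_2$ for the off-diagonal. However, the two computations that carry the real content are misstated. As written, the proposal establishes neither the constant $1+1/\theta$ nor the off-diagonal bound.

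\textbf{Diagonal.} The expression $\int_0^1\bigl(P(x)^2/\theta+P'(x)^2\bigr)\,dx$ with $P(x)=x$ equals $1+1/(3\theta)$, not $1+1/\theta$. The correct Iwaniec--Sarnak shape is $P(1)^2+\theta^{-1}\int_0^1P'(x)^2\,dx$. The $\log\norm(\mathfrak f)$ from the double pole pairs with $\int_0^1P'(x)^2\,dx=1$ to give $1/\theta$. The $1$ is $P(1)^2$, coming from the $-\log\norm(\mathfrak n_1\mathfrak n_2)$ part of the residue; in the paper this is the term $2\mathcal D'_{\mathrm{diag},2}(M)$ with its factor $\log(M/\norm(\mathfrak l))$. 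Since this constant is the whole content of the proposition, the ``standard computation'' must actually be carried out, uniformly in $\mathfrak f$.

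Coprimality to $\mathfrak f$ does not cost $\littleo(1)$: $\phi(\mathfrak f)/\norm(\mathfrak f)$ can be $\asymp1/\log_2\norm(\mathfrak f)$. What happens instead is an exact cancellation between the residue of $\zeta_{K,\mathfrak f}(1+2s)$ and the factor $\norm(\mathfrak f)/\phi(\mathfrak f)$ carried by each of the two sums $T_0(\mathfrak l)$. The $\eta(\mathfrak f)$ terms are $\ll\log_2\norm(\mathfrak f)=\littleo(\log M)$.

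Obtaining $T_0$ and $T_1$ needs the uniform Lemma~\ref{lem:conrey-1}. This uses the zero-free region for $\zeta_K$, and Lemma~\ref{lem:uniform Mertens} for $\prod_{\mathfrak p\mid\mathfrak l\mathfrak f}(1-\norm(\mathfrak p)^{-z})^{-1}$ slightly left of $\Re z=1$, since $\mathfrak f$ may have many small prime factors. Its error bound $\mathcal E_1(\mathfrak l)$ exceeds the main term of $T_0(\mathfrak l)$ when $\norm(\mathfrak l)$ is near $M$, and is negligible only on average over $\mathfrak l$. Lemma~\ref{conrey2} then evaluates the remaining sums of $\mu^2(\mathfrak l)/\phi(\mathfrak l)$.

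\textbf{Off-diagonal.} Your intermediate bound $M\norm(\mathfrak f)^{1/2+\varepsilon}\cdot\norm(\mathfrak f)^{1/2}/\norm(\mathfrak g)$ does not lead to $\norm(\mathfrak f)^{1/2+\theta+\varepsilon}$. The orthogonality weight is $\mu(\mathfrak s)h_K(\mathfrak g)$ with $\mathfrak s\mathfrak g=\mathfrak f$ and $h_K(\mathfrak g)\asymp\phi(\mathfrak g)$, so your bound would give about $M\norm(\mathfrak f)^{1+\varepsilon}$.

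The right bookkeeping, as in Lemma~\ref{lem:ray-ideal-pair-lattice}, is dyadic in $\norm(\mathfrak J_i)\asymp X_i$, where $\mathfrak J_1=\mathfrak a\mathfrak m_1$, $\mathfrak J_2=\mathfrak b\mathfrak m_2$ and $X_1\ge X_2$. Write $\mathfrak J_1\mathfrak J_2^{-1}=(\alpha)$ with $\alpha\equiv1\pmod{\mathfrak g}$; this removes the need for a class representative $\mathfrak c$. Off-diagonality gives a nonzero $\alpha-1\in\mathfrak g\mathfrak J_2^{-1}$ of norm at most $4X_1/\norm(\mathfrak J_2)$, so the number of pairs is $\ll X_1X_2/\norm(\mathfrak g)$ with no additive term. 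Weight by $\norm(\mathfrak J_1\mathfrak J_2)^{-1/2}$ and use $X_1X_2\ll M^2\norm(\mathfrak f)^{1+\varepsilon}$, $d(\mathfrak J_i)\ll\norm(\mathfrak J_i)^{\varepsilon}$ and $h_K(\mathfrak g)\ll\norm(\mathfrak g)$. Then each $\mathfrak g\mid\mathfrak f$ contributes $\ll M^{1+\varepsilon}\norm(\mathfrak f)^{1/2+\varepsilon}$ uniformly.

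So no separate treatment of small or large $\mathfrak g$ is needed. The Gauss-sum bounds of Section~\ref{sec:character sums} play no role, since the second moment has no dual sum. Conversely, your unspecified ``$+\dots$'' must genuinely vanish. A coset bound of the form $1+X_1/\norm(\mathfrak g)$ for each $\mathfrak J_2$ leaves, after weighting and multiplying by $h_K(\mathfrak g)$ for $\mathfrak g$ close to $\mathfrak f$, a term of size $\norm(\mathfrak f)$ up to logarithms. That is no smaller than the main term.
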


These formulas follow from the more precise estimates
\eqref{C asymptotic first moment} and \eqref{D asymptotic second moment},
together with Lemma~\ref{lem:primitive-family-size}.
Cauchy's inequality applied to the two moments gives a nonvanishing proportion of at least \(\theta/(1+\theta)+\littleo_{K,\theta}(1)\). For each fixed \(0<\theta<1/2\), first take the lower limit as \(\norm(\mathfrak f)\to\infty\) over nonempty primitive families. Taking the supremum of the resulting bounds over these fixed values of \(\theta\) then gives \(1/3\), as asserted in Theorem~\ref{thm:main}.

In Section~\ref{sec:ray-class-preliminaries} we recall the first properties of ray class characters
and their associated Hecke \(L\)-functions.
Section~\ref{sec:analytic-preliminaries} establishes various lemmas used throughout the rest of the paper. Sections~\ref{sec:first-moment} and
\ref{sec:second-moment} evaluate the twisted first and second moments, and Propositions \ref{prop:first mollified moment} and \ref{prop:second mollified moment} are proven in the subsequent Sections \ref{sec:mollified-first-moment} and \ref{sec:mollified-second-moment}. The proof of Theorem~\ref{thm:main} is detailed in Section~\ref{sec:main thm}.

Throughout this paper we maintain the notation \( K \supseteq \mathcal{O}_K \) for a number field and its ring of integers. The letter \(\mathfrak{f}\) denotes a nonzero integral ideal of \(\mathcal{O}_K \). All asymptotic estimates involving \(\mathfrak{f}\) hold as \(\norm(\mathfrak{f}) \) tends to infinity. The number-theoretic functions \(\phi(\mathfrak{f}) \), \(d(\mathfrak{f}) \), \( \mu(\mathfrak{f}) \) have their usual definitions. We also write
\[
	\eta(\mathfrak f) =\sum_{\mathfrak p\mid\mathfrak f} \frac{\log\norm(\mathfrak p)}{\norm(\mathfrak p)-1}.
\]
Implicit constants may depend on the field \(K\). At times we use the letter \(\varepsilon \) that indicates a small constant that may change from line to line, and implicit constants may depend on \(\varepsilon \) as well.

\section{Preliminaries}\label{sec:ray-class-preliminaries}
In this section we review the development of \( L \)-functions associated to characters of the ray class group. We first recall the Dedekind zeta function. The norm
of a nonzero integral ideal \(\mathfrak a\) of \(K\) is
\(\norm(\mathfrak a) \coloneqq [\mathcal O_K:\mathfrak a]\), and this definition is extended
multiplicatively to fractional ideals. The norm of a principal ideal is related to the norm of an element by \(\norm((\alpha)) = |\norm(\alpha)| \). We denote the first few terms of the Laurent expansion of the Dedekind zeta function at \(s=1\) by
\[
\zeta_K(s) =
\sum_{\mathfrak{a}} \frac{1}{\norm(\mathfrak{a})^s} =\frac{\gamma_{-1}(K)}{s-1}+\gamma_0(K)+\bigO_K(s-1).
\]
The coefficients here have been computed by Hashimoto, Iijima, Kurokawa, and Wakayama in \cite{HashimotoEtAl2004}.

We shall use a bound for \(1/\zeta_K(s)\) near \(\Re s=1\), in the proof of Lemma~\ref{lem:conrey-1}. For sufficiently small \(C>0\), the region \(\sigma\ge1-C/\log(2+|t|)\) contains at most one zero of \(\zeta_K\), as established by Stark in \cite{Stark1974}; see also \cite[Theorem~5.33]{Analyticbook}. Any such exceptional zero is real and lies to the left of \(1\). Since \(K\) is fixed, we may decrease \(C\) to exclude it. The standard argument for the reciprocal zeta function, as in \cite[Section~6.1]{MontgomeryVaughan2006}, then gives
\begin{equation}\label{eq:bound for Dedekind zeta in zero free region}
    \begin{aligned}
    \zeta_K(\sigma+\mathsf{i}t)^{-1} &\ll_K\log(2+|t|),
    \qquad \mathrlap{\sigma\ge1-\frac{C}{\log(2+|t|)},} \\
    \zeta_K(\sigma+\mathsf{i}t)^{-1} &\ll_K \log (2+ |t|).
    \end{aligned}
\end{equation}
At times we will deal with \( \zeta_K \) with certain factors removed from its Euler product. Accordingly, for an integral ideal \( \mathfrak{b} \) we set
\[
	\zeta_{K,\mathfrak{b}}(s) \coloneqq \sum_{(\mathfrak{a},\mathfrak{b}) = 1} \frac{1}{\norm(\mathfrak{a})^s} = \prod_{\mathfrak{p} \nmid \mathfrak{b}} \biggl(1 - \frac{1}{(\norm(\mathfrak{p}))^s}\biggr)^{-1}.
\]

\subsection{Ray class group}

Let \(I_K\) be the group of nonzero fractional ideals of \( K \), with
subgroup \(P_K\) of principal fractional ideals.  We use moduli with trivial infinite part, so no sign conditions are imposed at real embeddings.
For a nonzero integral ideal \(\mathfrak f\), put
\[I_K(\mathfrak f)
\coloneqq\{\mathfrak a\in I_K:(\mathfrak a,\mathfrak f)=1\} \quad \text{ and } P_K(\mathfrak f)
\coloneqq\{(\alpha)\in P_K:\alpha\equiv1\pmod{\mathfrak f}\}.\]
For fractional values \( \alpha \in K \), the
congruence \(\alpha\equiv1\pmod{\mathfrak f}\) means that
\(\alpha=a/b\) for some \(a,b\in\mathcal O_K\) where \((ab,\mathfrak f)=1\) and
\(a\equiv b\pmod{\mathfrak f}\) in the usual sense.
The ray class group modulo \(\mathfrak f\) is defined as \(\operatorname{Cl}_K(\mathfrak f) \coloneqq I_K(\mathfrak f)/P_K(\mathfrak f)\).

Write
\(h_K(\mathfrak f)\coloneqq|\operatorname{Cl}_K(\mathfrak f)|\) and
\(h_K\coloneqq|I_K/P_K|\). We also put
\(K^{\mathfrak f,1}=\{x\in K^\times:x\equiv1\pmod{\mathfrak f}\}\).

\begin{lemma}\label{lem:ray-class-exact-sequence}
There is an exact sequence
\[
    \begin{tikzcd}
    1 \ar[r] & \mathcal{O}_K^\times \cap K^{\mathfrak{f},1} \ar[r] & \mathcal{O}_K^\times \ar[r] & (\mathcal{O}_K/\mathfrak{f})^\times \ar[r] & \operatorname{Cl}_K(\mathfrak{f}) \ar[r] & I_K/P_K \ar[r] & 1.
    \end{tikzcd}
\]
Consequently,
\[
h_K(\mathfrak f)
=h_K\frac{\phi(\mathfrak f)}
{[\mathcal O_K^\times:\mathcal O_K^\times\cap K^{\mathfrak f,1}]}.
\]
\end{lemma}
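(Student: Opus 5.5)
The plan is to define each map explicitly, check exactness at every node, and then read off the class number formula by counting.

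\emph{The maps.} The first map is inclusion. The second is reduction modulo \(\mathfrak f\). Since every unit is prime to \(\mathfrak f\), a unit \(u\) gives a class \(u \bmod \mathfrak f\) in \((\mathcal O_K/\mathfrak f)^\times\). The third map sends the class of \(a\in\mathcal O_K\) with \((a,\mathfrak f)=1\) to the ray class of the principal ideal \((a)\). The last map is the natural projection \(I_K(\mathfrak f)/P_K(\mathfrak f)\to I_K/P_K\).

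We must check that the third map is well defined. If \(a\equiv b\pmod{\mathfrak f}\), then \(a/b\equiv1\pmod{\mathfrak f}\) in the sense defined above, so \((a)(b)^{-1}=(a/b)\in P_K(\mathfrak f)\). Multiplicativity of all the maps is clear.

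\emph{Exactness, from left to right.}
\begin{enumerate}
\item Injectivity of the inclusion is trivial.
\item At \(\mathcal O_K^\times\): a unit \(u\) maps to \(1\) exactly when \(u\equiv1\pmod{\mathfrak f}\). For an integral element this is equivalent to \(u\in K^{\mathfrak f,1}\). Indeed, if \(u=a/b\) with \(a\equiv b\) and \((b,\mathfrak f)=1\), then \(ub=a\equiv b\), and cancelling \(b\), which is invertible mod \(\mathfrak f\), gives \(u\equiv1\).
\item At \((\mathcal O_K/\mathfrak f)^\times\): the image of a unit \(u\) is \((u)=\mathcal O_K=(1)\), which is trivial. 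Conversely, suppose \((a)\in P_K(\mathfrak f)\). Then \((a)=(\alpha)\) with \(\alpha\equiv1\), so \(a=u\alpha\) for some unit \(u\). Writing \(\alpha=c/d\) with \(c\equiv d\) and \((cd,\mathfrak f)=1\), we get \(ad=uc\equiv ud\), hence \(a\equiv u\pmod{\mathfrak f}\).
\item At \(\operatorname{Cl}_K(\mathfrak f)\): the kernel of the projection is \((P_K\cap I_K(\mathfrak f))/P_K(\mathfrak f)\). Every \((\beta)\in P_K\cap I_K(\mathfrak f)\) can be written \(\beta=a/b\) with \(a,b\in\mathcal O_K\) prime to \(\mathfrak f\). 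This is the one nontrivial point. To arrange it, choose the denominator by weak approximation or CRT: pick \(b\in\mathcal O_K\) with \(b\beta\in\mathcal O_K\) and \(v_{\mathfrak p}(b)=\max(0,-v_{\mathfrak p}(\beta))\) at each \(\mathfrak p\mid\mathfrak f\). Since \(v_{\mathfrak p}(\beta)=0\) there, this means \(b\) is a unit at \(\mathfrak p\). Then \(a=b\beta\) is also prime to \(\mathfrak f\). Now \(b\) has an inverse \(b'\) modulo \(\mathfrak f\), and \((\beta)\equiv(ab')\) modulo \(P_K(\mathfrak f)\), because \(bb'\equiv1\). So the class of \((\beta)\) is the image of \(ab' \bmod\mathfrak f\).
\item Surjectivity onto \(I_K/P_K\): every ideal class contains an integral ideal prime to \(\mathfrak f\). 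This is the standard CRT/avoidance argument: given \(\mathfrak a\), choose \(\gamma\) with prescribed valuations \(-v_{\mathfrak p}(\mathfrak a)\) at the finitely many \(\mathfrak p\mid\mathfrak f\); then \(\gamma\mathfrak a\) is prime to \(\mathfrak f\).
\end{enumerate}

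\emph{Counting.} Exactness gives
\[
|\operatorname{Cl}_K(\mathfrak f)|=h_K\cdot\frac{\phi(\mathfrak f)}{|\mathrm{im}(\mathcal O_K^\times)|},
\]
and the image of \(\mathcal O_K^\times\) has order \([\mathcal O_K^\times:\mathcal O_K^\times\cap K^{\mathfrak f,1}]\). Here we use \(|(\mathcal O_K/\mathfrak f)^\times|=\phi(\mathfrak f)\); this index is finite since \((\mathcal O_K/\mathfrak f)^\times\) is finite.

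The main obstacle is the approximation step in the exactness at \(\operatorname{Cl}_K(\mathfrak f)\) and at \(I_K/P_K\), namely producing representatives prime to \(\mathfrak f\). I would handle both with the Chinese remainder theorem applied to the finitely many primes dividing \(\mathfrak f\).
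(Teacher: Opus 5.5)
Your proof is correct, and it is essentially the argument the paper relies on. The paper gives no proof of its own and cites Cohen (Section~3.2.1) and Milne (Theorem~V.1.7) with the infinite part of the modulus taken trivial; your node-by-node check of exactness, including the CRT step that produces representatives prime to \(\mathfrak f\) at \(\operatorname{Cl}_K(\mathfrak f)\) and \(I_K/P_K\), is the standard proof given in those references.
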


\begin{proof}
See \cite[Section~3.2.1]{Cohen} or \cite[Theorem~V.1.7]{MilneCFT}, taking the infinite part of the modulus to be trivial.
\end{proof}

\subsection{Ray class characters}

For \(K=\mathbb Q\), characters of this finite-modulus ray class group correspond to even Dirichlet characters.

Here and throughout, we refer to characters of \( \operatorname{Cl}_K(\mathfrak{f}) \) simply as ``characters modulo \(\mathfrak{f}\).'' We extend the definition of a character modulo \( \mathfrak{f} \) to \( I_K(\mathfrak{f}) \) by pulling back along \( I_K(\mathfrak{f}) \to \operatorname{Cl}_K(\mathfrak{f}) \). We further extend the definition of such a \( \chi \) to all fractional ideals by setting \( \chi(\mathfrak{a}) = 0 \) for \( (\mathfrak{a},\mathfrak{f}) \neq 1 \).

Let \(\chi^* \) and \(\chi \) be characters of moduli \(\mathfrak{f}^* \) and \(\mathfrak{f} \) with
\( \mathfrak{f}^* \mid \mathfrak{f} \). If
\(\chi^*(\mathfrak{a})=\chi(\mathfrak{a})\) for every
\(\mathfrak{a}\in I_K(\mathfrak{f})\), then \(\chi\) is
\textit{induced} by \(\chi^* \). The conductor of \( \chi \) modulo
\(\mathfrak{f}\) is the smallest modulus \( \mathfrak{c} \) such that
\(\chi\) is induced modulo \( \mathfrak{c} \), and \( \chi \) is primitive
if \( \mathfrak{c} = \mathfrak{f} \).

We write \(\sum_{\chi \bmod\mathfrak f}\) and
\(\sideset{}{^*}\sum_{\chi\bmod\mathfrak f}\) for sums over ray class
characters modulo \( \mathfrak{f} \); the first sum
is over all such characters, and the starred sum restricts to primitive
characters. For every fractional ideal \(\mathfrak{m} \neq 0 \), we have the orthogonality relation
\[
\sum_{\chi \text{ mod } \mathfrak{f}} \chi(\m)
=
\begin{cases}
h_K(\f), & \text{if } \m \in P_K(\f),\\
0, & \text{otherwise}.
\end{cases}
\]
The analogous sum over primitive characters is computed in the following
\begin{lemma}\label{lem:primitive-ray-orthogonality}
For every nonzero fractional ideal \(\mathfrak m\) with \((\mathfrak m,\mathfrak f)=1\),
\[
\sideset{}{^*}\sum_{\chi \bmod \mathfrak f}\chi(\mathfrak m)
=
\ssum_{\substack{\mathfrak m\in P_K(\mathfrak t)\\
\mathfrak s\mathfrak t=\mathfrak f}}
\mu(\mathfrak s)h_K(\mathfrak t).
\]
In particular, the number of primitive characters modulo \( \mathfrak{f} \) is
\begin{equation} \label{eq:}
h_K^*(\mathfrak f)
=
\sum_{\mathfrak s\mathfrak t=\mathfrak f}
\mu(\mathfrak s)h_K(\mathfrak t).
\end{equation}
\end{lemma}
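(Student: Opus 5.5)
The plan is the usual Möbius inversion over conductors, adapted to ray class groups. For each divisor \(\mathfrak t\mid\mathfrak f\), the natural surjection \(\operatorname{Cl}_K(\mathfrak f)\to\operatorname{Cl}_K(\mathfrak t)\) pulls characters modulo \(\mathfrak t\) back to characters modulo \(\mathfrak f\). Surjectivity holds because \(I_K(\mathfrak f)\to I_K(\mathfrak t)/P_K(\mathfrak t)\) is onto: every ray class modulo \(\mathfrak t\) contains an ideal prime to \(\mathfrak f\), which is the standard weak approximation fact implicit in Lemma~\ref{lem:ray-class-exact-sequence}. Consequently every character \(\chi\) modulo \(\mathfrak f\) is induced from a unique primitive character \(\chi^*\) modulo its conductor \(\mathfrak c\mid\mathfrak f\). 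Uniqueness of the conductor requires the fact that if \(\chi\) is induced modulo \(\mathfrak c_1\) and modulo \(\mathfrak c_2\), then it is induced modulo \(\gcd(\mathfrak c_1,\mathfrak c_2)\); I would cite this from Neukirch or Cohen rather than reprove it. Granting it, we obtain a bijection
\[
\{\chi\bmod\mathfrak t\}\longleftrightarrow\bigsqcup_{\mathfrak c\mid\mathfrak t}\{\text{primitive }\chi^*\bmod\mathfrak c\}.
\]
For \(\mathfrak m\) coprime to \(\mathfrak f\), the values \(\chi(\mathfrak m)\) and \(\chi^*(\mathfrak m)\) agree.

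Fix \(\mathfrak m\) with \((\mathfrak m,\mathfrak f)=1\) and put \(F(\mathfrak t)=\sum_{\chi\bmod\mathfrak t}\chi(\mathfrak m)\) and \(G(\mathfrak c)=\sideset{}{^*}\sum_{\chi\bmod\mathfrak c}\chi(\mathfrak m)\) for divisors \(\mathfrak t,\mathfrak c\) of \(\mathfrak f\). Since \(\mathfrak m\) is coprime to every such \(\mathfrak t\), the decomposition above gives
\[
F(\mathfrak t)=\sum_{\mathfrak c\mid\mathfrak t}G(\mathfrak c).
\]
Möbius inversion on the divisor lattice of \(\mathfrak f\) then yields
\[
G(\mathfrak f)=\sum_{\mathfrak s\mathfrak t=\mathfrak f}\mu(\mathfrak s)F(\mathfrak t).
\]
Inversion is valid because the ideal Möbius function satisfies \(\sum_{\mathfrak d\mid\mathfrak a}\mu(\mathfrak d)=[\mathfrak a=\mathcal O_K]\) by unique factorization. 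Finally, the orthogonality relation stated just before the lemma, applied with modulus \(\mathfrak t\), gives \(F(\mathfrak t)=h_K(\mathfrak t)\) if \(\mathfrak m\in P_K(\mathfrak t)\) and \(F(\mathfrak t)=0\) otherwise. Substituting this produces the displayed formula.

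For the count \(h_K^*(\mathfrak f)\), take \(\mathfrak m=\mathcal O_K\). It lies in every \(P_K(\mathfrak t)\), and \(\chi(\mathcal O_K)=1\) for every \(\chi\).

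The only genuine obstacle is the uniqueness of the primitive inducing character, equivalently that the set of moduli through which \(\chi\) factors is closed under \(\gcd\). The cleanest route is \(P_K(\mathfrak c_1)P_K(\mathfrak c_2)\cap I_K(\mathfrak f)\supseteq P_K(\gcd(\mathfrak c_1,\mathfrak c_2))\cap I_K(\mathfrak f)\). This follows from the Chinese remainder theorem: given \(\alpha\equiv1\bmod\gcd\), choose \(\beta\) with \(\beta\equiv\alpha\bmod\mathfrak c_1\) and \(\beta\equiv1\bmod\mathfrak c_2\), and write \(\alpha=\beta\cdot(\alpha/\beta)\). Everything else in the argument is formal.
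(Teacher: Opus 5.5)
Your proposal is correct and follows the same route as the paper. You decompose the characters modulo each \(\mathfrak t\mid\mathfrak f\) by their primitive inducing characters, apply M\"obius inversion over the divisors of \(\mathfrak f\), insert the orthogonality relation, and set \(\mathfrak m=\mathcal O_K\), while supplying details the paper takes as standard: surjectivity of \(\operatorname{Cl}_K(\mathfrak f)\to\operatorname{Cl}_K(\mathfrak t)\) and closure of the inducing moduli under gcd.

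One small fix is needed in your gcd step. First reduce to integral \(\alpha\), then also require \(\beta\equiv1\) modulo each prime \(\mathfrak p\mid\mathfrak f\) with \(\mathfrak p\nmid\mathfrak c_1\mathfrak c_2\). This makes both \((\beta)\) and \((\alpha/\beta)\) lie in \(I_K(\mathfrak f)\), where \(\chi\) is a homomorphism trivial on each \(P_K(\mathfrak c_i)\cap I_K(\mathfrak f)\).
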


\begin{proof}
Every character modulo \(\mathfrak f\) is induced from a unique primitive
character whose conductor divides \(\mathfrak f\). Then the first result follows from the orthogonality relation and M\"obius inversion. The second result follows by taking \(\mathfrak m=\mathcal O_K\).
\end{proof}

\subsection[Hecke L-functions]{Hecke \(L\)-functions}

The Hecke \(L\)-function associated to a ray class character \(\chi\) is defined for  \( \Re s  > 1 \) by
\[
L(s,\chi)
:=
\sum_{\mathfrak{a}} \frac{\chi(\mathfrak a)}{\norm(\mathfrak a)^s},
\]
where the sum is over all nonzero integral ideals in \( \mathcal{O}_K \). It admits an Euler product over prime ideals \( \mathfrak{p} \),
\[
L(s,\chi)
=
\prod_{\mathfrak{p}}
\left(
1-\frac{\chi(\mathfrak p)}{\norm(\mathfrak p)^s}
\right)^{-1}.
\]
If \( \chi \) has modulus \( \mathfrak{f} \), then the product is equivalently written by restricting to primes \( \mathfrak{p} \nmid \mathfrak{f} \).

For the rest of this subsection, assume that \(K\) is imaginary quadratic. For a primitive ray class character \(\chi\) of conductor \(\mathfrak f\), the completed \(L\)-function is
\[
\varLambda(s,\chi)
:=
\bigl(|d_K|\norm(\mathfrak f)\bigr)^{s/2}
(2\pi)^{-s}\Gamma(s)L(s,\chi),
\]
where \(d_K\) is the discriminant of \(K\).
Let \(\mathfrak D_K\) denote the different of \(K/\mathbb Q\), and \(\tau(\chi)\) the Gauss sum associated with \(\chi\). Following \cite[Section~3.8, Exercise~11]{Analyticbook}, choose a nonzero integral ideal \(\mathfrak e\) coprime to \(\mathfrak f\) and \(\beta\in K^\times\) such that \((\beta)=\mathfrak e(\mathfrak D_K\mathfrak f)^{-1}\). Both may be chosen independently of \(\chi\), and \(\beta\in(\mathfrak D_K\mathfrak f)^{-1}\). Then

\[
\tau(\chi)
=
\chi(\mathfrak e)
\sum_{\substack{\alpha\bmod\mathfrak f\\
(\alpha,\mathfrak f)=(1)}}
\chi_{\mathfrak f}(\alpha)
\exp\!\left(
2\pi i\Tr(\alpha\beta)
\right),
\]
where
\(\chi_{\mathfrak f}(\alpha\bmod\mathfrak f):=\chi((\alpha))\) for
\(\alpha\in\mathcal O_K\) with \((\alpha,\mathfrak f)=1\).  This is the
restriction of \(\chi\) to principal ideals, viewed as a character of
\((\mathcal O_K/\mathfrak f)^\times\).  We also have
\(|\tau(\chi)|=\norm(\mathfrak f)^{1/2}\) by
\cite[Section~3.8, Exercise~12]{Analyticbook}.

\begin{lemma}[Functional equation]
\label{thm:functional-equation}
Let \(\chi\) be a nontrivial primitive ray class character of conductor
\(\mathfrak f \).
The completed \(L\)-function associated to \( \chi \) is entire and satisfies the functional equation
\[
\varLambda(s,\chi)
=
W(\chi)\,
\varLambda(1-s,\overline{\chi}),
\]
for a complex number \( W(\chi) \) of modulus \( 1\) (the root number).
With the above normalization,
\(W(\chi)=\tau(\chi)/\norm(\mathfrak f)^{1/2}\).

\end{lemma}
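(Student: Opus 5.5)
This is Hecke's functional equation, specialized to a finite-modulus character of an imaginary quadratic field. I would follow Hecke's original theta-function method (equivalently Tate's thesis), taking care with the normalizations so that the stated root number comes out.

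\emph{Reduction to theta series.} Split the Dirichlet series over ideal classes: \(L(s,\chi)=\sum_{C\in \operatorname{Cl}_K(\mathfrak f)}\chi(C)\sum_{\mathfrak a\in C,\ \mathfrak a\subseteq\mathcal O_K}\norm(\mathfrak a)^{-s}\). For each class fix an integral representative \(\mathfrak a_C\). An integral ideal \(\mathfrak a\) lies in \(C\) exactly when \(\mathfrak a=\mathfrak a_C(\alpha)\) with \(\alpha\in\mathfrak a_C^{-1}\) and \(\alpha\equiv1\pmod{\mathfrak f}\). The set of such \(\alpha\) is a translate \(\alpha_0+\mathfrak a_C^{-1}\mathfrak f\) of a lattice, taken modulo the units \(\mathcal O_K^\times\cap K^{\mathfrak f,1}\), a finite group since \(K\) is imaginary quadratic. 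Since \(K\otimes\mathbb R=\mathbb C\), the Mellin transform \((2\pi)^{-s}\Gamma(s)|\alpha|^{-2s}=\int_0^\infty e^{-2\pi y|\alpha|^2}y^{s}\,dy/y\) expresses the completed \(L\)-function as a Mellin integral of Gaussian theta series \(\Theta(y)=\sum_{\alpha\in\alpha_0+L}e^{-2\pi y c|\alpha|^2}\) for suitable lattices \(L\), with \(c\) a positive normalizing constant.

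\emph{Poisson summation and analytic continuation.} Poisson summation on \(\mathbb C\cong\mathbb R^2\), using the trace pairing \(\Tr(xy)\), identifies the dual of \(\mathfrak a_C^{-1}\mathfrak f\) with \(\mathfrak a_C(\mathfrak D_K\mathfrak f)^{-1}\). This turns a shifted theta series at \(y\) into \(y^{-1}\) times a theta series twisted by additive characters \(\exp(2\pi i\Tr(\alpha_0\xi))\) at \(1/y\). Split the Mellin integral at \(y=1\) and apply this transformation to the piece over \((0,1)\). The constant terms from \(\xi=0\) produce the polar terms, proportional to \(1/s\) and \(1/(s-1)\). They are multiplied by \(\sum_C\chi(C)\), which vanishes because \(\chi\) is nontrivial. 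Hence \(\varLambda\) is entire, and both sides are expressed by integrals over \((1,\infty)\) that converge everywhere.

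\emph{Identifying the dual side and the root number.} I expect this to be the main obstacle. I would regroup the dual sum, which runs over \(\xi\in\mathfrak a_C(\mathfrak D_K\mathfrak f)^{-1}\), according to the residue class of \(\alpha_0\) modulo \(\mathfrak f\). The inner sum over residues \(\alpha\bmod\mathfrak f\) of \(\chi_{\mathfrak f}(\alpha)\exp(2\pi i\Tr(\alpha\xi))\) is a Gauss sum. To evaluate it, write \(\xi=\beta\gamma\) with \(\beta\) as chosen above, so that the ideal generated by \(\gamma\) and \(\mathfrak e\) transforms \(\xi\) back into an integral ideal. Primitivity of \(\chi\) is then used in the standard way: the twisted sum vanishes unless \((\xi\mathfrak D_K\mathfrak f,\mathfrak f)=1\), and otherwise it equals \(\overline{\chi}\) of the corresponding ideal times \(\tau(\chi)\). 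This identifies the dual side with \(\tau(\chi)\norm(\mathfrak f)^{-1/2}\varLambda(1-s,\overline\chi)\). The factors \((|d_K|\norm(\mathfrak f))^{s/2}\) balance because \(\norm(\mathfrak D_K)=|d_K|\) and because the covolume of \(\mathfrak a_C^{-1}\mathfrak f\) is \(\tfrac12|d_K|^{1/2}\norm(\mathfrak a_C)^{-1}\norm(\mathfrak f)\).

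\emph{The modulus of \(W(\chi)\).} Finally, \(|W(\chi)|=1\) follows directly from \(|\tau(\chi)|=\norm(\mathfrak f)^{1/2}\), which is quoted from \cite[Section~3.8, Exercise~12]{Analyticbook}. The details of the computation are standard; see \cite[Section~3.8]{Analyticbook} or Neukirch, Chapter~VII.
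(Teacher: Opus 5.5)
The paper gives no proof of this lemma. It is stated as the standard Hecke functional equation, relying on \cite[Section~3.8]{Analyticbook} for the Gauss-sum normalization (Exercise~11) and for $|\tau(\chi)|=\norm(\mathfrak f)^{1/2}$ (Exercise~12). Your outline is the usual theta-series and Poisson-summation proof behind that citation, and it is correct in structure. This includes your normalization check via $\norm(\mathfrak D_K)=|d_K|$ and the covolume of $\mathfrak a_C^{-1}\mathfrak f$, which is also what makes the polar coefficients independent of $C$, so that they are multiplied by $\sum_C\chi(C)=0$.

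Two pieces of bookkeeping should be made explicit. First, with your literal choice of representatives the shift is always $\alpha_0=1$, so no Gauss sum appears. To produce Gauss sums, take the representatives of the ray classes lying over a fixed wide class $[\mathfrak b]$ to be $\mathfrak b(\gamma)$. This gives the common lattice $\mathfrak b^{-1}\mathfrak f$, with shifts that run over $(\mathcal O_K/\mathfrak f)^\times$ after unfolding the action of $\mathcal O_K^\times$. Second, the $\xi=0$ term yields only the $1/(s-1)$ pole. A $1/s$ pole comes from $\alpha=0$ and occurs only when $\mathfrak f=\mathcal O_K$.
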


\begin{lemma}[Approximate functional equation I]\label{approximate functional equation}
Fix a weight function
\[
V(y)
=
\frac{1}{2\pi i}
\int_{(3)}
G(s)
\left(\frac{\sqrt{|d_K|}}{2\pi}\right)^s
\frac{\Gamma(\frac{1}{2}+s)}{\Gamma(\frac{1}{2})}
y^{-s}\frac{ds}{s}
\]
where \( G \) is an even entire function, decaying faster than any polynomial in vertical strips with \( G(0) = 1 \). For a nontrivial primitive character \(\chi\) of conductor \(\mathfrak f\),
\[
L\!\left(\frac12,\chi\right)
= \sum_{\substack{\mathfrak a\subseteq\mathcal O_K\\ \mathfrak a\ne(0)}}
\frac{\chi(\mathfrak a)}{\norm(\mathfrak a)^{1/2}}
V\!\left(
\frac{\norm(\mathfrak a)}
{\norm(\mathfrak f)^{1/2}}
\right) +
\frac{\tau(\chi)}{\norm(\mathfrak f)^{1/2}}
\sum_{\substack{\mathfrak a\subseteq\mathcal O_K\\
\mathfrak a\ne(0)}}
\frac{\overline{\chi}(\mathfrak a)}{\norm(\mathfrak a)^{1/2}}
V\!\left(
\frac{\norm(\mathfrak a)}
{\norm(\mathfrak f)^{1/2}}
\right).
\]
\end{lemma}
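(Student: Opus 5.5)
The plan is the standard contour-shift argument. Define
\[
I=\frac{1}{2\pi i}\int_{(3)} G(s)\,\frac{\varLambda(\tfrac12+s,\chi)}{\bigl(|d_K|\norm(\mathfrak f)\bigr)^{1/4}(2\pi)^{-1/2}\Gamma(\tfrac12)}\,\frac{ds}{s}.
\]
Since \(\chi\) is nontrivial and primitive, Lemma~\ref{thm:functional-equation} says \(\varLambda(s,\chi)\) is entire. It is also of moderate growth in vertical strips: \(\Gamma\) decays exponentially, and \(L(s,\chi)\) has polynomial growth by Phragm\'en--Lindel\"of. Combined with the rapid decay of \(G\), this justifies moving the line of integration from \(\Re s=3\) to \(\Re s=-3\).

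\textbf{Residue and the first sum.} The only pole crossed is the simple pole of \(1/s\) at \(s=0\). Since \(G(0)=1\), its residue is exactly \(L(\tfrac12,\chi)\), so
\[
L(\tfrac12,\chi)=I-I',
\]
where \(I'\) is the same integral taken over \((-3)\). To evaluate \(I\), expand \(L(\tfrac12+s,\chi)\) as its Dirichlet series, which converges absolutely on \(\Re s=3\), and interchange sum and integral. The factor in front of \(L\) equals
\[
\Bigl(\frac{\sqrt{|d_K|}\,\norm(\mathfrak f)^{1/2}}{2\pi}\Bigr)^{s}\,\frac{\Gamma(\tfrac12+s)}{\Gamma(\tfrac12)}.
\]
Combining \(\norm(\mathfrak f)^{s/2}\) with \(\norm(\mathfrak a)^{-s}\) gives \((\norm(\mathfrak a)/\norm(\mathfrak f)^{1/2})^{-s}\). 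Hence each term is precisely \(\chi(\mathfrak a)\norm(\mathfrak a)^{-1/2}\,V(\norm(\mathfrak a)/\norm(\mathfrak f)^{1/2})\), which is the first sum in the statement.

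\textbf{The second sum.} In \(I'\), substitute \(s\mapsto -s\). Because \(G\) is even and \(ds/s\) is invariant up to the orientation sign, this gives
\[
-I'=\frac{1}{2\pi i}\int_{(3)} G(s)\,\frac{\varLambda(\tfrac12-s,\chi)}{\cdots}\,\frac{ds}{s}.
\]
Now apply the functional equation \(\varLambda(\tfrac12-s,\chi)=W(\chi)\,\varLambda(\tfrac12+s,\overline\chi)\), with \(W(\chi)=\tau(\chi)/\norm(\mathfrak f)^{1/2}\). The normalizing factor is the same for \(\chi\) and \(\overline\chi\), since \(\overline\chi\) is also primitive of conductor \(\mathfrak f\). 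Expanding as before then produces \(W(\chi)\) times the same sum with \(\overline\chi\) in place of \(\chi\), which is the second term.

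\textbf{Main obstacle.} The only delicate points are the growth estimate needed to shift contours and the bookkeeping of the normalization. For the first, convexity bounds for \(L(s,\chi)\) in the strip \(-3\le\Re s\le 3\) (obtainable from the functional equation and Phragm\'en--Lindel\"of) are polynomial in \(|t|\), and this polynomial growth is dominated by the decay of \(G\). For the second, one needs to check that \((|d_K|\norm(\mathfrak f))^{s/2}(2\pi)^{-s}\) at \(\tfrac12+s\), divided by its value at \(\tfrac12\), matches the kernel of \(V\). It does, because \(|d_K|^{s/2}=(\sqrt{|d_K|})^s\).
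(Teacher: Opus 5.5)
Your proposal is correct and follows the same route the paper relies on. The paper gives no proof of its own; it cites \cite[Theorem~5.3]{Analyticbook}, which is exactly this argument: shift $G(s)\varLambda(\tfrac12+s,\chi)s^{-1}$ from $\Re s=3$ to $\Re s=-3$, then apply $s\mapsto -s$, the evenness of $G$, and the functional equation (the paper uses the same template for Lemma~\ref{approximate functional equation II}). Your normalization check and your orientation bookkeeping for $-I'$ are both right.
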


See \cite[Theorem~5.3]{Analyticbook} for details. For any fixed \(A>0\), we may choose \(G(s)=P_A(s)\mathsf e^{s^2}\), where \(P_A\) is an even polynomial with \(P_A(0)=1\) that vanishes at all poles of \(\Gamma(\tfrac12+s)\) in \(\Re s\ge-A\). Shifting the contour to \(\Re s=-A\) or \(\Re s=A\) then shows that \(V\) is an example of a function \(f\) on \([0,\infty)\), smooth on \( (0,\infty)\), satisfying the estiamtes
\begin{equation}\label{eq:approximation to indicator}
    f(y)=1+\mathsf O_A(y^A)\quad(y\to0^+),
    \qquad f^{(j)}(y)=\mathsf O_{A,j}(y^{-A})\quad(y\to\infty).
\end{equation}
If \(P_A\) is chosen with real coefficients, then \(G\) is real-valued on the real axis.

\begin{lemma}[Approximate functional equation II]\label{approximate functional equation II}
Let \(\chi\) be a nontrivial primitive character of conductor \(\mathfrak f\). Let \(G(s)\)
be an even entire function, decaying faster than any polynomial in every vertical strip, and
normalized by \(G(0)\Gamma(1/2)=1\). Define
\[
\lambda_\chi(n)
:=
\sum_{\substack{\mathfrak a,\mathfrak b\subseteq\mathcal O_K\\
\mathfrak a,\mathfrak b\ne(0)\\
\norm(\mathfrak a)\norm(\mathfrak b)=n}}
\chi(\mathfrak a)\overline{\chi}(\mathfrak b),
\]
so that
\(L(s,\chi)L(s,\overline\chi)=\sum_{n\ge1}\lambda_\chi(n)n^{-s}\)
for \(\Re s>1\). Then
   \[
   	\left|L\left(\frac{1}{2},\chi\right)\right|^2
	=
	2\sum_{n=1}^{\infty} \frac{\lambda_{\chi}(n)}{n^{1/2}}
	W\left(\frac{4n\pi^2}{|d_{K}|\norm(\f)}\right)
   \]
   where
   \[
   	W(y)
	=
	\frac{1}{2\pi i}\int_{(3)}
	\Gamma^2\left(s+\frac{1}{2}\right)G^2(s)s^{-1}y^{-s}\,\mathrm{d}s.
   \]
\end{lemma}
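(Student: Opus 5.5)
We prove the formula by the standard contour-shift argument applied to the product \(\varLambda(s,\chi)\varLambda(s,\overline\chi)\), taking Lemma~\ref{thm:functional-equation} as known. Set
\[
\Lambda_2(s)\coloneqq\varLambda\!\left(\tfrac12+s,\chi\right)\varLambda\!\left(\tfrac12+s,\overline\chi\right)
=\left(\frac{|d_K|\norm(\mathfrak f)}{4\pi^2}\right)^{1/2+s}\Gamma^2\!\left(\tfrac12+s\right)L\!\left(\tfrac12+s,\chi\right)L\!\left(\tfrac12+s,\overline\chi\right).
\]
Apply Lemma~\ref{thm:functional-equation} to both \(\chi\) and \(\overline\chi\). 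This gives \(\Lambda_2(s)=W(\chi)W(\overline\chi)\Lambda_2(-s)\). Next we check that \(W(\chi)W(\overline\chi)=1\). Iterating the functional equation gives \(\varLambda(s,\chi)=W(\chi)W(\overline\chi)\varLambda(s,\chi)\), and \(\varLambda(\cdot,\chi)\) is not identically zero, so the product of root numbers is \(1\). Hence \(\Lambda_2\) is even. It is also entire, because \(\chi\) is nontrivial and each completed \(L\)-function is entire.

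Now consider
\[
I\coloneqq\frac{1}{2\pi i}\int_{(3)}\Lambda_2(s)\,\frac{G(s)^2}{\Gamma(1/2)^2}\cdot\Gamma(1/2)^2\,\frac{ds}{s}.
\]
More simply, take the integrand \(\Lambda_2(s)G(s)^2 s^{-1}\), normalized using \(G(0)\Gamma(1/2)=1\). Shift the contour from \(\Re s=3\) to \(\Re s=-3\). The shift is justified because \(G^2\) decays faster than any polynomial in vertical strips, while \(\Gamma^2\) and \(L\) grow at most polynomially there (by convexity/Phragmén–Lindelöf). The only pole crossed is at \(s=0\), with residue
\[
\Lambda_2(0)G(0)^2=\left(\frac{|d_K|\norm(\mathfrak f)}{4\pi^2}\right)^{1/2}\Gamma(1/2)^2G(0)^2\left|L\!\left(\tfrac12,\chi\right)\right|^2 .
\]
Here \(L(\tfrac12,\overline\chi)=\overline{L(\tfrac12,\chi)}\), since the Dirichlet coefficients of \(\overline\chi\) are the conjugates of those of \(\chi\). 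The factor \(G(0)^2\Gamma(1/2)^2\) equals \(1\).

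On the line \(\Re s=-3\), substitute \(s\mapsto -s\). Since \(\Lambda_2\) and \(G\) are even and \(ds/s\) is invariant under this substitution, the integral over \((-3)\) equals \(-I\). Therefore
\[
\left(\frac{|d_K|\norm(\mathfrak f)}{4\pi^2}\right)^{1/2}\left|L\!\left(\tfrac12,\chi\right)\right|^2=2I .
\]

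Finally, on \(\Re s=3\) expand \(L(\tfrac12+s,\chi)L(\tfrac12+s,\overline\chi)=\sum_n\lambda_\chi(n)n^{-1/2-s}\). This series converges absolutely, since \(|\lambda_\chi(n)|\le\sum_{n_1n_2=n}a(n_1)a(n_2)\ll n^{\varepsilon}\), where \(a(m)\) counts ideals of norm \(m\). That justifies interchanging sum and integral. Dividing by \(X^{1/2}\) with \(X=|d_K|\norm(\mathfrak f)/4\pi^2\) leaves
\[
\sum_n\lambda_\chi(n)n^{-1/2}\,\frac{1}{2\pi i}\int_{(3)}\Gamma^2\!\left(s+\tfrac12\right)G^2(s)\,(n/X)^{-s}\,\frac{ds}{s},
\]
and the inner integral is exactly \(W(4n\pi^2/(|d_K|\norm(\mathfrak f)))\). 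This yields the claimed formula.

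The main step to check carefully is the symmetry \(W(\chi)W(\overline\chi)=1\), together with the bookkeeping of the constants \((2\pi)^{-s}\) and \(|d_K|\), so that the argument of \(W\) comes out as \(4\pi^2 n/(|d_K|\norm(\mathfrak f))\). Both follow from the normalization of \(\varLambda\) fixed before Lemma~\ref{thm:functional-equation}. The growth estimates needed for the contour shift are routine.
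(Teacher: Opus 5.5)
Your proposal is correct and is essentially the paper's argument: integrate \(\varLambda(\tfrac12+s,\chi)\varLambda(\tfrac12+s,\overline\chi)G^2(s)s^{-1}\) along \(\Re s=3\), use the evenness coming from the functional equation to get \(\varLambda(\tfrac12,\chi)\varLambda(\tfrac12,\overline\chi)G(0)^2=2I\), and expand the Dirichlet series using the normalization \(G(0)\Gamma(\tfrac12)=1\). You also make explicit several points the paper leaves implicit, namely \(W(\chi)W(\overline\chi)=1\) (so the product is even), the justification of the contour shift, and the interchange of sum and integral, all of which are fine; the one small imprecision is that the growth bound should be stated for the entire function \(\Lambda_2\) (or only for large \(|\Im s|\)), since \(\Gamma^2(\tfrac12+s)\) itself has poles at \(s=-\tfrac12,-\tfrac32,-\tfrac52\) inside the strip.
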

\begin{proof}
    Consider the integral
	\[
		I(\chi)
		=
		\frac{1}{2\pi i}
		\int_{(3)}
		\varLambda(\tfrac{1}{2} + s, \chi)
		\varLambda(\tfrac{1}{2} + s, \overline{\chi})
		G^2(s)\frac{\mathrm{d}s}{s}.
	\]
    Then by Cauchy's theorem, we have
    \[
    	\varLambda\left(\frac{1}{2},\chi\right)
	\varLambda\left(\frac{1}{2},\overline{\chi}\right)
	G^2(0)
	=
	2I(\chi).
    \]
Since
\[
\varLambda\left(\frac12,\chi\right)
\varLambda\left(\frac12,\overline\chi\right)
=
\frac{\sqrt{|d_K|\norm(\mathfrak f)}}{2\pi}
\Gamma(1/2)^2\left|L\left(\frac12,\chi\right)\right|^2,
\]
the stated normalization cancels the gamma factors. Expanding the
two \(L\)-functions in \(I(\chi)\) and grouping the ideals according to
\(n=\norm(\mathfrak a)\norm(\mathfrak b)\), we get
\[
	\left|L\left(\frac{1}{2},\chi\right)\right|^2
	=
	2\sum_{n=1}^{\infty}\frac{\lambda_\chi(n)}{n^{1/2}}
	\frac{1}{2\pi i}
	\int_{(3)}
	\Gamma^2\left(s+\frac{1}{2}\right)G^2(s)s^{-1}
	\left(\frac{|d_{K}|\norm(\f)}{4n\pi^2}\right)^s
	\,\mathrm{d}s.
\]
\end{proof}

By choosing the even polynomial factor in \(G\) to cancel the gamma poles in \(\Re s\ge-A\), the weight \(W\) can likewise be chosen to satisfy \eqref{eq:approximation to indicator}.

\section{Preliminary lemmas} \label{sec:analytic-preliminaries}
\subsection{Sums over primes}

We recall the standard ideal-counting estimate
from \cite{marcus1977number} and the number-field Mertens estimates of
\cite{Rosen1999Mertens}.
\begin{lemma}\label{lem:ideal-density-mertens}
As \(x\to\infty\),
\begin{equation} \label{eq:ideal-density}
\sum_{\norm(\mathfrak a)\le x}1
=
\gamma_{-1}(K)x
+\bigO_K\!\left(x^{1-1/[K:\mathbb Q]}\right),
\end{equation}
recalling the notation \(\gamma_{-1}(K)=\operatorname*{res}_{s=1}\zeta_K(s)\). Moreover,
\begin{align}
\prod_{\norm(\mathfrak p)\le x}
\left(1-\frac1{\norm(\mathfrak p)}\right)^{-1}
&\sim e^\gamma\gamma_{-1}(K)\log x, \label{eq:mertens product}\\
\sum_{\norm(\mathfrak p)\le x}\frac1{\norm(\mathfrak p)}
&\sim\log_2x, \label{eq:mertens reciprocal}\\
\sum_{\norm(\mathfrak p)\le x}
\frac{\log\norm(\mathfrak p)}{\norm(\mathfrak p)}
&=\log x+\bigO_K(1). \label{eq:mertens log over norm}
\end{align}
\end{lemma}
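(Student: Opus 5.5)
The plan is to treat the lemma as a collection of standard facts about the Dedekind zeta function and derive each one from its analytic properties, citing \cite{marcus1977number} and \cite{Rosen1999Mertens} for the full details.

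\emph{Ideal count \eqref{eq:ideal-density}.} Write \(n=[K:\mathbb Q]\) and sum over ideal classes. Fix a class \(C\) and an integral ideal \(\mathfrak b\in C^{-1}\). Ideals \(\mathfrak a\in C\) correspond bijectively to principal ideals \((\alpha)\subseteq\mathfrak b\) via \(\mathfrak a\mapsto\mathfrak a\mathfrak b\). Equivalently, they correspond to nonzero \(\alpha\in\mathfrak b\) modulo units, subject to \(|\norm(\alpha)|\le x\norm(\mathfrak b)\). Choose a fundamental domain for the unit action in the Minkowski embedding, cut out by a cone. By homogeneity, counting lattice points of \(\mathfrak b\) in the dilate of that cone region is a Lipschitz-principle lattice count. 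This gives a volume term proportional to \(x\) and a boundary error \(\bigO(x^{1-1/n})\). Summing over the \(h_K\) classes yields \(cx+\bigO(x^{1-1/n})\). The constant \(c\) is identified with \(\gamma_{-1}(K)\) by partial summation in \(\zeta_K(s)\) as \(s\to1^+\). Since \(K\) is imaginary quadratic, the unit group is finite and the region is just a disc, so the argument is particularly simple here.

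\emph{Estimate \eqref{eq:mertens log over norm}.} Use the prime ideal theorem, or more elementarily the following route. Compute \(\sum_{\norm(\mathfrak a)\le x}\log\norm(\mathfrak a)\) in two ways:
\begin{enumerate}[label=(\roman*)]
\item via \eqref{eq:ideal-density} and partial summation, which gives \(\gamma_{-1}(K)x\log x+\bigO(x)\);
\item via \(\log\norm(\mathfrak a)=\sum_{\mathfrak d\mid\mathfrak a}\Lambda_K(\mathfrak d)\), which gives \(\sum_{\norm(\mathfrak d)\le x}\Lambda_K(\mathfrak d)\bigl(\gamma_{-1}(K)x/\norm(\mathfrak d)+\bigO((x/\norm(\mathfrak d))^{1-1/n})\bigr)\).
\end{enumerate}
A Chebyshev-type bound \(\sum_{\norm(\mathfrak d)\le x}\Lambda_K(\mathfrak d)\ll x\) controls the error term in (ii). It follows from the same identity, or from the fact that each rational prime has at most \(n\) primes above it. Comparing (i) and (ii) gives \(\sum\Lambda_K(\mathfrak d)/\norm(\mathfrak d)=\log x+\bigO(1)\). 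Prime powers contribute \(\bigO(1)\), which yields \eqref{eq:mertens log over norm}.

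\emph{Estimates \eqref{eq:mertens reciprocal} and \eqref{eq:mertens product}.} Partial summation applied to \eqref{eq:mertens log over norm} gives \(\sum_{\norm(\mathfrak p)\le x}1/\norm(\mathfrak p)=\log_2x+B_K+\bigO(1/\log x)\), which is stronger than \eqref{eq:mertens reciprocal}. For the product, take logarithms: this equals \(\sum1/\norm(\mathfrak p)\) plus a convergent sum over higher prime powers, so the product is \(\asymp\log x\). Pinning down the constant \(e^\gamma\gamma_{-1}(K)\) is the main obstacle. For this I would follow Rosen's argument: compare \(\log\zeta_K(1+1/\log x)\sim\log(\gamma_{-1}(K)\log x)\) with the truncated Euler product, and evaluate the tail difference using the prime ideal theorem with error \(\bigO(x\exp(-c\sqrt{\log x}))\) (from the zero-free region recalled before \eqref{eq:bound for Dedekind zeta in zero free region}). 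This mirrors Mertens' classical computation, where \(\int_0^\infty(e^{-u}-\mathbf 1_{u<1})\,du/u=-\gamma\) produces the factor \(e^\gamma\). Since the paper simply invokes \cite{Rosen1999Mertens} here, I would cite it for this constant rather than reproduce the computation.
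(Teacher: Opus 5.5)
Your proposal is correct and matches the paper, which gives no argument of its own and simply cites \cite{marcus1977number} for \eqref{eq:ideal-density} and \cite{Rosen1999Mertens} for the three Mertens-type estimates. Two small precisions, neither affecting validity: \begin{itemize} \item Of your two routes to \(\sum_{\norm(\mathfrak d)\le x}\Lambda_K(\mathfrak d)\ll x\), the comparison \(\psi_K(x)\le[K:\mathbb Q]\psi(x)\) is the one that works directly. The identity alone does not give a Chebyshev bound as easily as over \(\mathbb Q\), because \(N_K(y)-2N_K(y/2)\) can be negative. \item For the constant \(e^\gamma\gamma_{-1}(K)\) you need \(\log\zeta_K(1+1/\log x)=\log(\gamma_{-1}(K)\log x)+\littleo(1)\), not mere asymptotic equivalence. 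Your estimate \(\sum_{\norm(\mathfrak p)\le x}\norm(\mathfrak p)^{-1}=\log_2x+B_K+\bigO(1/\log x)\) then already suffices, so the prime ideal theorem with error term is not needed. \end{itemize}
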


These asymptotics can be interpreted as the truncation of certain Dirichlet series at \( s=1 \). We will need the upper bounds to hold uniformly slightly through the left of \( \Re s=1 \). This is the content of the following

\begin{lemma} \label{lem:uniform Mertens}
	    Fix \( B, C > 0 \). As \( x \to \infty \), the following two
	    estimates hold:
	    \begin{align}
	        \prod_{\mathfrak{p} \mid \mathfrak{n}} (1 - \norm(\mathfrak{p})^{-\sigma})^{-1} &\ll_{K,B,C} \log_2 x, \label{eq:uniform mertens product} \\
        \sum_{\mathfrak p\mid\mathfrak n} \frac{\log\norm(\mathfrak p)}{\norm(\mathfrak p)^\sigma-1} &\ll_{K,B,C}\log_2 x. \label{eq:uniform mertens log over norm}
        \end{align}
		uniformly in \( \norm(\mathfrak{n}) \le x^B \) and \( \sigma \ge 1-1/(C \log_2 x)\).
\end{lemma}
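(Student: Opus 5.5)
The plan is to reduce both estimates to a sum over the prime divisors of \(\mathfrak n\) that is dominated by the smallest possible primes. Since \(\norm(\mathfrak n)\le x^B\), the ideal \(\mathfrak n\) has at most \(\omega(\mathfrak n)\ll_K B\log x/\log_2 x\) distinct prime divisors. For a fixed \(\sigma\), the quantity \(\sum_{\mathfrak p\mid\mathfrak n} g(\norm(\mathfrak p))\), with \(g\) decreasing, is largest when the primes dividing \(\mathfrak n\) are the \(\omega(\mathfrak n)\) prime ideals of smallest norm. Those all have norm \(\le y\), where \(y\) satisfies \(\pi_K(y)\asymp \omega\). By the prime ideal theorem (or simply by \(\pi_K(y)\gg y/\log y\) together with the trivial upper bound), we can take \(y\ll_{K,B}\log x\).

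Take logarithms in \eqref{eq:uniform mertens product}. Using \(-\log(1-u)=u+\bigO(u^2)\) for \(u=\norm(\mathfrak p)^{-\sigma}\le 2^{-\sigma}\) bounded away from \(1\) (note \(\sigma\ge 1/2\) for \(x\) large), it suffices to show
\[
\sum_{\mathfrak p\mid\mathfrak n}\norm(\mathfrak p)^{-\sigma}\le \log_3 x+\bigO_{K,B,C}(1),
\]
and the \(u^2\) terms contribute \(\bigO(1)\) because \(\sum_{\mathfrak p}\norm(\mathfrak p)^{-2\sigma}\) converges for \(\sigma\ge 3/4\). By the extremal remark, the left side is at most \(\sum_{\norm(\mathfrak p)\le y}\norm(\mathfrak p)^{-\sigma}\). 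For \(\norm(\mathfrak p)\le y\ll\log x\) we have
\[
\norm(\mathfrak p)^{1-\sigma}\le \exp\bigl(\log y/(C\log_2 x)\bigr)=\exp\bigl(\bigO(1/C)\bigr)=\bigO_{B,C}(1),
\]
so this sum is \(\ll \sum_{\norm(\mathfrak p)\le y}\norm(\mathfrak p)^{-1}\). By \eqref{eq:mertens reciprocal} that is \(\log_2 y+\bigO(1)\le\log_3 x+\bigO(1)\). Exponentiating gives \(\ll\log_2 x\).

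A cleaner route avoids the constant-factor loss in the exponent, which would only give \((\log_2 x)^{\bigO(1)}\). Split \(\norm(\mathfrak p)^{-\sigma}=\norm(\mathfrak p)^{-1}\cdot\norm(\mathfrak p)^{1-\sigma}\) and write \(\norm(\mathfrak p)^{1-\sigma}=1+\bigO(\log\norm(\mathfrak p)/\log_2 x)\). The error term is then \(\ll (\log_2 x)^{-1}\sum_{\norm(\mathfrak p)\le y}\log\norm(\mathfrak p)/\norm(\mathfrak p)\ll \log y/\log_2 x=\bigO(1)\) by \eqref{eq:mertens log over norm}. This yields exactly \(\log_3 x+\bigO(1)\) in the exponent, hence the product is \(\ll\log_2 x\).

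For \eqref{eq:uniform mertens log over norm}, first note \(\norm(\mathfrak p)^\sigma-1\gg\norm(\mathfrak p)^\sigma\). The same extremal reduction and the same bound \(\norm(\mathfrak p)^{1-\sigma}\ll1\) for \(\norm(\mathfrak p)\le y\) give
\[
\sum_{\mathfrak p\mid\mathfrak n}\frac{\log\norm(\mathfrak p)}{\norm(\mathfrak p)^\sigma-1}\ll\sum_{\norm(\mathfrak p)\le y}\frac{\log \norm(\mathfrak p)}{\norm(\mathfrak p)}\ll\log y\ll\log_2 x,
\]
by \eqref{eq:mertens log over norm}.

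The step needing most care is justifying the extremal replacement rigorously, since \(\mathfrak n\) may have prime divisors of norm larger than \(y\). I would do it by splitting the divisors of \(\mathfrak n\) into small primes \(\norm(\mathfrak p)\le y\), which are controlled as above, and large primes \(\norm(\mathfrak p)>y\). There are at most \(\omega(\mathfrak n)\ll\log x/\log_2 x\) large primes, and each has \(\norm(\mathfrak p)^{-\sigma}\le y^{-\sigma}\ll y^{-1}\) (using \(y^{1-\sigma}\ll1\)). Choosing \(y=\log x\), their total contribution is \(\ll (\log x/\log_2 x)\cdot(1/\log x)=\bigO(1)\) in the first sum, and \(\ll \omega\log y/y\ll1\) in the second. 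This avoids any appeal to the prime ideal theorem beyond the trivial bound \(\omega(\mathfrak n)\ll\log\norm(\mathfrak n)/\log_2\norm(\mathfrak n)\). That bound follows because \(\mathfrak n\) with \(k\) distinct prime factors has norm at least the product of the \(k\) smallest prime norms, each \(\ge 2\), and at most \([K:\mathbb Q]\) prime ideals lie over each rational prime.
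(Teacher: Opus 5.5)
Your final argument is correct and is essentially the paper's proof. You split the prime divisors of \(\mathfrak n\) at \(\norm(\mathfrak p)=\log x\), treat the small primes with the expansion \(\norm(\mathfrak p)^{1-\sigma}=1+\bigO(\log\norm(\mathfrak p)/\log_2 x)\) and \eqref{eq:mertens log over norm}, and treat the large primes by noting there are at most \(B\log x/\log_2 x\) of them, each contributing at most \((\log x)^{-\sigma}\). The only cosmetic difference is that you exponentiate a prime sum, which needs Mertens in the sharper form \(\sum_{\norm(\mathfrak p)\le y}\norm(\mathfrak p)^{-1}=\log_2 y+\bigO(1)\) (obtainable by taking logarithms in \eqref{eq:mertens product}) rather than just \eqref{eq:mertens reciprocal}; the paper instead compares directly with the product \eqref{eq:mertens product}.
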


\begin{proof}
Both expressions decrease as \(\sigma\) increases, so it suffices to take
\(\sigma=1-1/(C\log_2x)\). For sufficiently large \(x\), this value is
at least \(3/4\). Split the prime divisors of \(\mathfrak n\) at
\(\norm(\mathfrak p)=\log x\). Below this cutoff, Taylor's formula gives
\(\norm(\mathfrak p)^{-\sigma}
=\norm(\mathfrak p)^{-1}
(1+\bigO_C(\log\norm(\mathfrak p)/\log_2x))\).
Consequently, \eqref{eq:mertens log over norm} implies that
\(\sum_{\norm(\mathfrak p)\le\log x}
(\norm(\mathfrak p)^{-\sigma}-\norm(\mathfrak p)^{-1})\ll_{K,C}1\).
The terms of degree at least two in the logarithms of both Euler products
have bounded sum, since \(2\sigma\ge3/2\). Therefore
\[
\prod_{\norm(\mathfrak p)\le\log x}
(1-\norm(\mathfrak p)^{-\sigma})^{-1}
\ll_{K,C}
\prod_{\norm(\mathfrak p)\le\log x}
(1-\norm(\mathfrak p)^{-1})^{-1}
\ll_K\log_2x
\]
by \eqref{eq:mertens product}.
For the remaining prime divisors, the bound
\(\sum_{\mathfrak p\mid\mathfrak n}\log\norm(\mathfrak p)
\le\log\norm(\mathfrak n)\le B\log x\) gives
\[
\sum_{\substack{\mathfrak p\mid\mathfrak n\\\norm(\mathfrak p)>\log x}}
\norm(\mathfrak p)^{-\sigma}
\le\frac{B\log x}{(\log x)^\sigma\log_2x}
=\frac{Be^{1/C}}{\log_2x}.
\]
Its Euler product is bounded, which proves
\eqref{eq:uniform mertens product}. This argument also covers ideals of
bounded norm, including \(\mathfrak n=\mathcal O_K\), without a
separate assumption on \(\log_2\norm(\mathfrak n)\).

For \eqref{eq:uniform mertens log over norm}, the primes below the cutoff contribute
\(\ll_C\)
\(\sum_{\norm(\mathfrak p)\le\log x}
\log\norm(\mathfrak p)/\norm(\mathfrak p)\ll_K\log_2x\).
Above the cutoff it is
\(\ll_C(\log x)^{-\sigma}
\sum_{\mathfrak p\mid\mathfrak n}\log\norm(\mathfrak p)
\ll_{B,C}1\). This proves the second estimate.

\end{proof}

The next lemma develops an approximation for the number of primitive characters modulo \( \mathfrak{f} \) when \( \mathcal{O}_K \) has finitely many units. By Dirichlet's unit theorem, this occurs exactly when \( K = \mathbb{Q} \) or \( K \) is imaginary quadratic.

\begin{lemma}\label{lem:primitive-family-size}

Consider the Dirichlet convolution
\[
\Phi_K^*(\f)
:=(\mu*\phi)(\f)
=\sum_{\s\t=\f}\mu(\s)\phi(\t).
\]
We have
\[
\Phi_K^*(\f) =\norm(\f) \prod_{\p\parallel\f}\left(1-\frac{2}{\norm(\p)}\right) \prod_{\p^2\mid\f}\left(1-\frac{1}{\norm(\p)}\right)^2.
\]
In particular, if a (prime) ideal of norm \(2\) exactly divides \( \mathfrak{f} \), then \(h_K^*(\f)=0\).

Now assume \( \mathcal{O}_K^\times \) is finite. Then
\[
h_K^*(\f)=\frac{h_K}{\lvert\mathcal O_K^\times\rvert}\Phi_K^*(\f)+\bigO_K(1).
\]
Moreover, as \(\norm(\mathfrak f)\to\infty\) among ideals satisfying
\(\mathfrak p\parallel\mathfrak f\implies\norm(\mathfrak p)\ne2\),
\[
h_K^*(\f) \gg \frac{\norm(\f)} {\log_2^2\norm(\f)}.
\]
\end{lemma}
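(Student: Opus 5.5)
Since $\mu*\phi$ is a convolution of multiplicative functions on ideals, $\Phi_K^*$ is multiplicative, so the product formula reduces to a prime-power computation. For $\mathfrak p^k\parallel\mathfrak f$ the only surviving terms are $\mathfrak s=\mathcal O_K$ and $\mathfrak s=\mathfrak p$, so the local factor is $\phi(\mathfrak p^k)-\phi(\mathfrak p^{k-1})$. Write $q=\norm(\mathfrak p)$. If $k=1$, this is $(q-1)-1=q(1-2/q)$. If $k\ge2$, it is $q^{k-1}(q-1)-q^{k-2}(q-1)=q^k(1-1/q)^2$. Multiplying over primes gives the stated formula. If some $\mathfrak p$ with $\norm(\mathfrak p)=2$ satisfies $\mathfrak p\parallel\mathfrak f$, the factor $1-2/2$ vanishes, so $\Phi_K^*(\mathfrak f)=0$. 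For the claim that $h_K^*(\mathfrak f)=0$ I would not go through the asymptotic. Instead, for such $\mathfrak p$, write $\mathfrak f=\mathfrak p\mathfrak f'$ with $(\mathfrak p,\mathfrak f')=1$. Since $(\mathcal O_K/\mathfrak p)^\times$ is trivial, $(\mathcal O_K/\mathfrak f)^\times\to(\mathcal O_K/\mathfrak f')^\times$ is an isomorphism. Comparing the exact sequences of Lemma~\ref{lem:ray-class-exact-sequence} for $\mathfrak f$ and $\mathfrak f'$ then shows $\operatorname{Cl}_K(\mathfrak f)\to\operatorname{Cl}_K(\mathfrak f')$ is an isomorphism. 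Hence every character modulo $\mathfrak f$ is induced modulo $\mathfrak f'$, and none is primitive.

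Now assume $\mathcal O_K^\times$ is finite. Write $w_{\mathfrak t}=[\mathcal O_K^\times:\mathcal O_K^\times\cap K^{\mathfrak t,1}]$. By Lemma~\ref{lem:ray-class-exact-sequence}, $h_K(\mathfrak t)=h_K\phi(\mathfrak t)/w_{\mathfrak t}$. A unit $u\ne1$ satisfies $u\equiv1\pmod{\mathfrak t}$ only if $\mathfrak t\mid(u-1)$. Since there are finitely many units, all such $\mathfrak t$ divide some element of a fixed finite set $\mathcal E$ of nonzero ideals. Outside this set of $\mathfrak t$, we have $w_{\mathfrak t}=|\mathcal O_K^\times|$. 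Lemma~\ref{lem:primitive-orthogonality} (the count in Lemma~\ref{lem:primitive-ray-orthogonality}) then gives
\[
h_K^*(\mathfrak f)-\frac{h_K}{|\mathcal O_K^\times|}\Phi_K^*(\mathfrak f)
=\sum_{\substack{\mathfrak s\mathfrak t=\mathfrak f\\ \mathfrak t\in\mathcal E'}}\mu(\mathfrak s)\,h_K\phi(\mathfrak t)\Bigl(\frac1{w_{\mathfrak t}}-\frac1{|\mathcal O_K^\times|}\Bigr),
\]
where $\mathcal E'$ is the finite set of divisors of members of $\mathcal E$. Each summand is bounded by $h_K\max_{\mathcal E'}\norm(\mathfrak t)\ll_K1$, and there are at most $|\mathcal E'|$ terms. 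This gives the $\bigO_K(1)$ error.

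For the lower bound, assume no prime of norm $2$ exactly divides $\mathfrak f$. Each factor $1-1/\norm(\mathfrak p)$ is at least $\tfrac12$, and each factor $1-2/\norm(\mathfrak p)$ with $\norm(\mathfrak p)\ge3$ is at least $(1-1/\norm(\mathfrak p))^{3}$, since $1-2x\ge(1-x)^3$ for $0\le x\le1/3$. Hence
\[
\Phi_K^*(\mathfrak f)\ge\norm(\mathfrak f)\prod_{\mathfrak p\mid\mathfrak f}\bigl(1-\norm(\mathfrak p)^{-1}\bigr)^{3}.
\]
Lemma~\ref{lem:uniform Mertens} with $\sigma=1$, $\mathfrak n=\mathfrak f$, $x=\norm(\mathfrak f)$ bounds $\prod(1-\norm(\mathfrak p)^{-1})^{-1}\ll\log_2\norm(\mathfrak f)$, so this only yields $\norm(\mathfrak f)/\log_2^3$. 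To reach exponent $2$ I would sharpen the treatment of the degree-one factors. Note $\log(1-2x)=2\log(1-x)+\bigO(x^2)$, and $\sum_{\mathfrak p}\norm(\mathfrak p)^{-2}$ converges once primes of norm $2$ are excluded, where $1-2x$ could be $0$. It follows that
\[
\Phi_K^*(\mathfrak f)\gg_K\norm(\mathfrak f)\prod_{\mathfrak p\mid\mathfrak f}\bigl(1-\norm(\mathfrak p)^{-1}\bigr)^2\gg\frac{\norm(\mathfrak f)}{\log_2^2\norm(\mathfrak f)}.
\]
Since this tends to infinity, it absorbs the $\bigO_K(1)$ error and gives the bound for $h_K^*(\mathfrak f)$.

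The main obstacle is making the constants in the last step uniform: the number of primes of norm $3$ is bounded, so finitely many factors $1-2/3$ cost only a constant. I would also double-check the vanishing argument against the convention that $\phi(\mathcal O_K)=1$.
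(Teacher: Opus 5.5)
Your proposal is correct and follows the paper's proof essentially step by step: the local-factor computation for $\Phi_K^*$, the comparison of the exact sequences for $\mathfrak f=\mathfrak p\mathfrak f'$ to show $h_K^*(\mathfrak f)=0$ (the paper phrases this as the five lemma), the observation that $h_K(\mathfrak t)-h_K\phi(\mathfrak t)/|\mathcal O_K^\times|$ is supported on divisors of the finitely many ideals $(u-1)$, and the lower bound via $\frac{1-2/x}{(1-1/x)^2}=1-\frac{1}{(x-1)^2}$ combined with Lemma~\ref{lem:uniform Mertens}. The preliminary $(1-x)^3$ bound and the concern about primes of norm $3$ are unnecessary, because the convergent product over $\norm(\mathfrak p)\ge3$ already absorbs those factors into the implied constant.
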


\begin{proof}
The product formula for \(\Phi_K^*(\mathfrak{f}) \) is readily checked on prime powers, and then it follows for all ideals by multiplicativity.  The cardinality
formula of Lemma~\ref{lem:ray-class-exact-sequence} gives
\[
    h_K(\mathfrak{t}) - \frac{h_K}{|\mathcal{O}_K^\times|} \phi(\mathfrak{t}) = h_K \phi(\mathfrak{t}) \frac{|\mathcal{O}_K^\times \cap K^{\mathfrak{t},1}| - 1}{|\mathcal{O}_K^\times|}.
\]
for any integral ideal \(\mathfrak{t} \).
The right-hand side is supported on \(\mathfrak{t}\) dividing \( \prod_{1 \neq u \in \mathcal{O}_K^\times} (u-1) \), and the count of such ideals depends only on \( K \). So the convolution of the right-hand side by the M\"obius function is trivially \( \mathsf{O}_K(1) \). By Lemma~\ref{lem:primitive-ray-orthogonality}, the convolution of the
left-hand side is exactly the difference in the asserted asymptotic.

Now suppose that \( \mathfrak{p} \) with norm \( 2 \) exactly divides \( \mathfrak{f} \) and write  \( \mathfrak{f} = \mathfrak{pg} \). Apply the exact sequence in
Lemma~\ref{lem:ray-class-exact-sequence} to \(\mathfrak{f} \) and
\(\mathfrak{g} \), and connect the sequences with vertical arrows:
\[
    \begin{tikzcd}
    1 \ar[r] & (\mathcal{O}_K/\mathfrak{f})^\times/\operatorname{im} \mathcal{O}_K^\times \ar[r] \ar[d] & \operatorname{Cl}_K(\mathfrak{f}) \ar[r]\ar[d] & I_K/P_K \ar[r] \ar[d] & 1 \\
    1 \ar[r] & (\mathcal{O}_K/\mathfrak{g})^\times/\operatorname{im} \mathcal{O}_K^\times \ar[r] & \operatorname{Cl}_K(\mathfrak{g}) \ar[r] & I_K/P_K \ar[r] & 1.
    \end{tikzcd}
\]
The left and right vertical arrows are isomorphisms - the left by the Chinese remainder theorem - so the middle one is also, per the five lemma. Thus every character modulo \( \mathfrak{f} \) is induced modulo \( \mathfrak{g} \), i.e.\ there are no primitive characters modulo \(\mathfrak{f} \).

Finally suppose no prime ideal of norm \(2\) exactly divides \(\mathfrak f\). We write
\[
	\Phi_K^*(\mathfrak{f}) = \norm(\mathfrak{f}) \prod_{\mathfrak{p} \mid \mathfrak{f}} \biggl(1 - \frac{1}{\norm(\mathfrak{p})}\biggr)^2 \prod_{\mathfrak{p} \Vert \mathfrak{f}} \frac{1-2/ \norm(\mathfrak{p})}{(1-1/ \norm(\mathfrak{p}))^2}
\]
Applying the elementary inequality
\(\frac{1-2/x}{(1-1/x)^2}
=1-\frac1{(x-1)^2}\ge1-\frac3{x^2}\) for \(x\ge3\)
to the product over \(\mathfrak p\parallel\mathfrak f\) shows that it is
\(\gg_K1\), since these factors are positive and
\(\sum_{\mathfrak p}\norm(\mathfrak p)^{-2}\) converges.
The asserted lower bound now follows from the prime ideal
theorem and the first estimate of Lemma~\ref{lem:uniform Mertens}.

\end{proof}

\subsection{Mollifier estimates}

The next lemma is an asymptotic formula for the derivatives of the mollification of \( 1/\zeta_{K,\mathfrak{af}}(s) \) near \( s=1 \). We show that the mollification saves a logarithmic factor in the error term on average. This averaging will occur in the estimates of Section~\ref{sec:mollified-second-moment}. This result is a generalization of \cite[Lemma 10]{Conrey1983ZerosLine}, which deals with the case \( K = \mathbb{Q} \). See also the discussion after the proof of \cite[Lemma 1]{MichelVanderKam2000}.

\begin{lemma}\label{lem:conrey-1}
	Write \( (x)_+ = \max(x,0) \). Fix a polynomial \(h\) satisfying \( h(0) = 0 \) and consider
	\[
		\frac{1}{\zeta_{K,\mathfrak{af}}}(s;h) \coloneqq \sum_{(\mathfrak{m,af}) = 1} \frac{\imu{m}}{\inorm[s]{m}} h \biggl( \biggl(\frac{ \log (M / \inorm{am})}{\log M}\biggr)_+ \biggr).
	\]
    We have the following asymptotic formula as
	\(\norm(\mathfrak{f}) \to \infty \), where the error terms are uniform in \( |s-1| \ll 1/\log M\) and in nonzero integral ideals \(\mathfrak a\) with \(\norm(\mathfrak a)\le M\), for each fixed integer \(k\ge0\).
	\[
		\biggl(\frac{1}{\zeta_{K,\mathfrak{a}\mathfrak{f}}}\biggr)^{(k)}(s;h) = \frac{A_k}{\prod_{\mathfrak{p} \mid \mathfrak{a}\mathfrak{f}} (1 - \norm(\mathfrak{p})^{-s})}  + \bigO_{K,\theta,h,k}(\log^k(M)\mathcal E_1(\mathfrak a))
	\]
	Here
	\[
		\begin{aligned}
			A_0 &= \frac{1}{\gamma_{-1}(K)}\left\{(s-1) h \biggl(\frac{\log(M/\norm(\mathfrak{a}))}{\log(M)}\biggr) + \frac{1}{\log(M)} h' \biggl(\frac{\log(M/\norm(\mathfrak{a}))}{\log(M)}\biggr)\right\}, \\
			A_1 &= \frac{1}{\gamma_{-1}(K)} h \biggl(\frac{\log(M/\norm(\mathfrak{a}))}{\log(M)}\biggr), \\
			A_k &= 0 \quad (k \ge 2), \\
			\mathcal{E}_1(\mathfrak{a})
				&= \frac{\log_2^2(M)}{\log^2(M)} [1 + \log (M) \log_2(M) (\norm(\mathfrak{a}) / M)^{b}], %
		\end{aligned}
	\]
	where \( b = 1/(C\log_2(M)) \) for sufficiently large
	\(C=C(K,\theta)>0\).
\end{lemma}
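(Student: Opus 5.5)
The plan follows Conrey's argument for \(K=\mathbb Q\): Perron's formula, then a contour shift into the zero-free region. Write \(L=\log M\) and \(X=M/\norm(\mathfrak a)\ge1\). Since \(h\) is a polynomial with \(h(0)=0\), it suffices by linearity to treat \(h(x)=x^j\) with \(j\ge1\). In that case, for \(\norm(\mathfrak m)\le X\),
\[
h\Bigl(\frac{\log(X/\norm(\mathfrak m))}{L}\Bigr)=\frac{j!}{L^j}\cdot\frac1{2\pi i}\int_{(c)}\Bigl(\frac{X}{\norm(\mathfrak m)}\Bigr)^{w}\frac{dw}{w^{j+1}},\qquad c>0,
\]
and the integral vanishes for \(\norm(\mathfrak m)>X\), so the positive-part truncation is automatic. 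Summing over \(\mathfrak m\) coprime to \(\mathfrak{af}\) gives
\[
\frac1{\zeta_{K,\mathfrak{af}}}(s;h)=\frac{j!}{L^j}\frac1{2\pi i}\int_{(c)}\frac{1}{\zeta_K(s+w)}\prod_{\mathfrak p\mid\mathfrak{af}}\bigl(1-\norm(\mathfrak p)^{-s-w}\bigr)^{-1}\frac{X^w}{w^{j+1}}\,dw ,
\]
with \(c=1-\Re s+1/L\). Differentiating \(k\) times in \(s\) under the integral is harmless, so I will work with the \(k\)-th derivative of the integrand directly.

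Next I shift the contour to the left, onto the path \(\Re(s+w)=1-b\) for \(|\Im w|\le T\), with \(b=1/(C\log_2 M)\). On this path, \eqref{eq:bound for Dedekind zeta in zero free region} controls \(1/\zeta_K\), and Lemma~\ref{lem:uniform Mertens} (with \(x=M\), noting \(\norm(\mathfrak{af})\le M\norm(\mathfrak f)=\norm(\mathfrak f)^{1+\theta}\), a fixed power of \(M\)) bounds the Euler product over \(\mathfrak p\mid\mathfrak{af}\) and its log-derivatives by powers of \(\log_2 M\). The horizontal segments at \(|\Im w|=T\), with \(T\) a power of \(\log M\), are handled the same way, with \(b\) shrunk to \(C/\log T\) as the zero-free region requires.

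The pole crossed is at \(w=0\), of order \(j+1\), where \(1/\zeta_K(s+w)\) vanishes to first order at \(s+w=1\); note \(|s-1|\ll1/L\). I expand \(1/\zeta_K(s+w)=(s+w-1)/\gamma_{-1}(K)+\bigO((s+w-1)^2)\) and keep the Euler product factor intact. The residue at \(w=0\) then gives exactly
\[
\frac{j!}{L^j}\,\frac{1}{\gamma_{-1}(K)}\Bigl[(s-1)\frac{(\log X)^j}{j!}+\frac{(\log X)^{j-1}}{(j-1)!}\Bigr]\prod_{\mathfrak p\mid\mathfrak{af}}(1-\norm(\mathfrak p)^{-s})^{-1}
\]
plus correction terms. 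These corrections come from \(w\)-derivatives of the Euler product, which are controlled by \eqref{eq:uniform mertens log over norm}, and from the quadratic terms of \(1/\zeta_K\). Each carries an extra factor of \(1/L\) times powers of \(\log_2 M\), and I will bound them by \(\log_2^2(M)/L^2\).

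The main term is \(A_0\) times the Euler product. Its \(s\)-derivatives produce \(A_1\) and \(A_k=0\) for \(k\ge2\), once the terms where the derivative falls on the Euler product are absorbed into the error via \eqref{eq:uniform mertens log over norm}. Each \(s\)-derivative of the error terms costs at most a factor of \(L\) (by Cauchy's estimate on a disc of radius \(\asymp1/L\)), which accounts for the \(\log^k M\). The shifted contour contributes
\[
\ll L^{-j}X^{-b}\,(\log_2 M)\int\frac{\log(2+|t|)}{|w|^{j+1}}\,|dw| ,
\]
and since \(|w|\gg b\) on this path the integral is \(\ll b^{-j}\). The result is \(\ll L^{-j}(\log_2 M)^{j+1}(\norm(\mathfrak a)/M)^{b}\), which for \(j=1\) matches the term \(\log_2^3 M\,L^{-1}(\norm(\mathfrak a)/M)^b\) in \(\mathcal E_1\).

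The main obstacle is \(j\ge2\), where the same crude bound gives worse powers of \(\log_2 M\). To avoid this, I will use the contour \(\Re w=-b\) only when \(\log X\gg\log_2 M\); otherwise I keep \(c=1/L\)-type contours and rely on the size \((\log X/L)^j\) directly. I will verify that this case split reproduces exactly the two-term shape of \(\mathcal E_1\), with \(C(K,\theta)\) chosen large enough that Lemma~\ref{lem:uniform Mertens} applies.
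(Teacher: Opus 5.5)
Your proposal is correct and is essentially the paper's argument. It has the same ingredients: a Mellin/Perron representation of the truncated weight; a shift into the zero-free region along \(\Re(s+w)=1-b\) for \(|\Im w|\le T\), with \(T\) a power of \(\log M\) and the finite Euler product controlled by Lemma~\ref{lem:uniform Mertens}; the residue at the pole of order \(j+1\), computed by Taylor-expanding \(1/\zeta_K\) while keeping \(\prod_{\mathfrak p\mid\mathfrak a\mathfrak f}(1-\norm(\mathfrak p)^{-s})^{-1}\) intact; and Cauchy estimates on discs of radius \(\asymp 1/\log M\) for the \(s\)-derivatives.

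The one correction is that your ``main obstacle'' for \(j\ge2\) does not exist, because every extra power of \(\log_2 M\) comes with an extra factor \(1/L\). Your crude bound \(L^{-j}(\log_2 M)^{j+1}(\norm(\mathfrak a)/M)^b\) is \((\log_2 M/L)^{j-1}\) times the \(j=1\) bound. So, exactly as in the paper, where the \(\ell\ge2\) terms are dominated by \(\ell=1\), no case split is needed.

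Separately, take \(c=|s-1|+1/L\) rather than \(1-\Re s+1/L\), so that \(c>0\) also when \(\Re s>1\).
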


\textbf{Proof.} We follow the proof of \cite[Lemma 10]{Conrey1983ZerosLine}. Expanding \( h \) in Taylor series and applying the Mellin transform
\[
	\biggl(\frac{\log (M/\norm(\mathfrak{a} \mathfrak{m}))}{\log M}\biggr)_+^\ell = \frac{\ell!}{\log^\ell(M)} \times \frac{1}{2\pi \mathsf{i}} \int_{(3)} \frac{(M/\norm(\mathfrak{a} \mathfrak{m}))^{z}}{z^{\ell+1}} \,\mathrm{d}z
\]
gives
\[
	\frac{1}{\zeta_{K,\mathfrak{a}\mathfrak{f}}}(s;h)  = \sum_{\ell \ge 1} \frac{h^{(\ell)}(0)}{\log^\ell(M)} \times \frac{1}{2\pi \mathsf{i}} \sum_{(\mathfrak{m}, \mathfrak{a} \mathfrak{f}) = 1} \int_{(3)} \frac{\mu(\mathfrak{m})}{\norm(\mathfrak{m})^{z+s}} \frac{(M/\norm(\mathfrak{a}))^z}{z^{\ell + 1}} \,\mathrm{d}z.
\]
We interchange the inner sum and integral, change variables \( z \leadsto z-s \) and straighten the contour to get
\begin{equation} \label{reciprocal zeta integral kernel formula}
	\frac{1}{\zeta_{K,\mathfrak{a} \mathfrak{f}}}(s;h) = \sum_{\ell \ge 1} \frac{h^{(\ell)}(0)}{\log^\ell(M)} \times \frac{1}{2 \pi \mathsf{i}} \int_{(2)} \frac{1}{\zeta_{K,\mathfrak{a} \mathfrak{f}}(z)} \frac{(M/\norm(\mathfrak{a}))^{z-s}}{(z-s)^{\ell + 1}} \,\mathrm{d}z
\end{equation}
To evaluate the integral in \eqref{reciprocal zeta integral kernel formula} we deform the contour to the line \(\Re z=1\) with a left indentation near \(z=1\), as in the diagram below. Let \(I(s)\) denote the sum of the integrals over the three
segments \(\Re z=1\), \(\Im z=\pm\log M\), and \(\Re z=1-b\), so that the original integral over \(\Re z = 2 \) is equal to
\(-I(s)\) plus \(2\pi\mathsf i\) times the residue of the integrand at \(z=s\), with the orientations shown.

\begin{center}
	\begin{tikzpicture}[
	baseline=0pt,
    >=Stealth,
    arrow/.style={
        decoration={markings, mark=at position #1 with {\arrow{>}}},
        postaction={decorate}
    },
    arrow/.default=0.5,
    pt/.style={circle, fill=black, inner sep=1.2pt}
]

    \def\vpos{2.5} %
    \def\vlength{3.0} %

    \def\bone{1.0} %
    \def\bdist{0.8} %
    \def\bm {\bone-\bdist} %
    \def\tone{0.9}  %
    \def\tmax{3.0} %

    \coordinate (A) at (\bone, \tmax);    %
    \coordinate (B) at (\bone, \tone);    %
    \coordinate (C) at (\bm, \tone);      %
    \coordinate (D) at (\bm, -\tone);     %
    \coordinate (E) at (\bone, -\tone);   %
    \coordinate (F) at (\bone, -\tmax);   %

    \coordinate (Lbot) at (\vpos, -\vlength);
    \coordinate (Ltop) at (\vpos, \vlength);

    \coordinate (ZOne) at (\bone, 0);

    \draw[arrow=0.5] (Lbot) -- (Ltop);

    \draw[arrow=0.5] (A) -- (B);

    \draw[arrow=0.5] (B) -- (C);

    \draw[arrow=0.5] (C) -- (D);

    \draw[arrow=0.5] (D) -- (E);

    \draw[arrow=0.5] (E) -- (F);

    \node[below] at (\vpos, -\vlength) {$\Re z = 2$};

    \node[pt] at (ZOne) {};
    \node[right=2.4pt] at (ZOne) {$z=1$};

    \draw[gray, thick, dashed] (ZOne) circle (0.2cm);
    \node[gray, above left] at (\bone+0.25, 0.25) {$s$};

    \newcommand{\pointlabel}[3]{%
        \node[pt] at (#1) {};
        \node[#2] at (#1) {#3};
    }

    \pointlabel{B}{above right}{$1 + \mathsf{i}t_0$}
    \pointlabel{C}{above left}{$(1-b) + \mathsf{i}t_0$}
    \pointlabel{D}{below left}{$(1-b) - \mathsf{i}t_0$}
    \pointlabel{E}{below right}{$1 - \mathsf{i}t_0$}

    \node[above] at (A) {$1 + i\infty$};
    \node[below] at (F) {$1 - i\infty$};

\end{tikzpicture}

	\( \mathrlap{\qquad (t_0 = \log M)} \)
\end{center}
On the new contour we recall the bounds for \(\zeta_K\) in \eqref{eq:bound for Dedekind zeta in zero free region}, as well as the Mertens estimate \eqref{eq:uniform mertens product} of Lemma~\ref{lem:uniform Mertens}:
\[
    \begin{aligned}
		\zeta_K(\sigma + \mathsf{i}t)^{-1} &\ll_K \log_2(M) \quad \text{ for } \sigma \ge 1-b, |t| \le \log M; \\
		 \zeta_K(1+\mathsf{i}t)^{-1}
										   &\ll_K\log(2+|t|),
		 \quad \text{for } |t|\ge 1;  \\
        \prod_{\mathfrak{p} \mid \mathfrak{a}\mathfrak{f}}
		(1-\norm(\mathfrak{p})^{-z})^{-1}
										   &\ll_{K,\theta} \log_2(M)
        \quad \text{ for } \Re z \ge 1-b.
    \end{aligned}
\]
The finite Euler product estimate applies because \(\norm(\mathfrak a\mathfrak f)\le M^{1+1/\theta}\). These bounds control \(\zeta_{K,\mathfrak{af}}(z)^{-1}\) in the integrand defining \(I(s)\). The other factor is a function of \(s\). We estimate its \( s \)-derivatives using Cauchy's theorem, integrating around a circle about \( s = 1 \). We find that the optimal radius is about \( 1/\log M \), giving

	\[
		\partial_s^k \biggl[ \frac{(M/\norm(\mathfrak{a}))^{z-s}}{(z-s)^{\ell+1}}\biggr]  \ll_{k,\ell} \frac{\log^k(M) (M/\norm(\mathfrak{a}))^{\Re z - 1}}{|z-1|^{\ell+1}}
	\]
	uniformly for \( |s-1| \ll 1/\log M \).

    Armed with the above bounds, we can estimate \( I(s) \). Since \( \ell \ge 1 \), the integrals along \( \Re z = 1 \) can be differentiated in \( s \) under the integral sign. Doing so gives
	\[
		\partial^k_s \int_{\Re z = 1} \ll \log^k(M) \log_2(M) \int_{t_0}^{\infty} \frac{\log(3+t)}{t^{\ell + 1}} \,\mathrm{d}t \ll \log^{k-\ell}(M) \log_2^2(M)
	\]
	The horizontal integrals satisfy
	\[
		\partial_s^k \int_{\Im z = \pm t_0} \ll \log^{k-(\ell+1)}(M) \log_2^2(M) \times \int_{1-b}^1 (M/\norm(\mathfrak{a}))^{\sigma - 1} \,\mathrm{d}\sigma \ll \log^{k-(\ell+1)}(M)\log_2(M)
	\]
	Finally, the last integral satisfies
	\[
        \begin{aligned}
		\partial_s^k \int_{\Re z = 1-b}
        &\ll \log^k(M) \log_2^2(M) (\norm(\mathfrak{a})/M)^{b} \int_{-t_0}^{t_0} \frac{\mathrm{d}t}{(b^2 + t^2)^{(\ell + 1)/2}} \\
        &\ll  \log^k(M) \log_2^{\ell+2}(M) (\norm(\mathfrak{a})/M)^b.
        \end{aligned}
	\]
    We finally sum over \(\ell\) to obtain the error term in \eqref{reciprocal zeta integral kernel formula}.
    \[
        \sum_{\ell \ge 1} \frac{h^{(\ell)}(0)}{\log^{\ell}(M)} \times \partial_s^k \frac{1}{2\pi\mathsf{i}} I(s) \ll \log^k(M) \mathcal{E}_1(\mathfrak{a})
    \]
    It remains to evaluate the main term, namely the residue
    \[
		\sum_{\ell \ge 1} \frac{h^{(\ell)}(0)}{\log^\ell(M)} \times \partial_s^k \operatorname*{res}_{z=s} \biggl( \frac{1}{\zeta_{K,\mathfrak{a f}}(z)} \frac{(M/\norm(\mathfrak{a}))^{z-s}}{(z-s)^{\ell+1}} \,\mathrm{d}z \biggr).
	\]
	The \( k \)th \(s\)-derivative of the residue is
	\[
		\begin{aligned}
			\partial_s^k \operatorname*{res}_{z=s}
			&= \frac{1}{\ell!} \partial_s^k \partial_z^\ell \biggl[\frac{(M/\norm(\mathfrak{a}))^{z-s}}{\zeta_{K,\mathfrak{a}\mathfrak{f}
			}(z)}\biggr]_{z=s} \\
			&= \frac{1}{\ell!} \sum_{q=0}^\ell \binom{\ell}{q} \log^{\ell-q} (M/\norm(\mathfrak{a})) \times (1/\zeta_{K,\mathfrak{a}\mathfrak{f}})^{(q+k)}(s).
		\end{aligned}
	\]
	We have
	\[
		\frac{1}{\zeta_{K,\mathfrak{a}\mathfrak{f}}(s)} = \frac{(s-1)/\gamma_{-1}(K) + \bigO_K((s-1)^2)}{\prod_{\mathfrak{p} \mid \mathfrak{a}\mathfrak{f}} (1 - \norm(\mathfrak{p})^{-s})} = \frac{s-1}{\gamma_{-1}(K)\prod_{\mathfrak{p} \mid \mathfrak{a}\mathfrak{f}} (1-\norm(\mathfrak{p})^{-s})} + \bigO_{K,\theta}(\log^{-2}(M) \log_2(M))
	\]
	The logarithmic derivative of the finite Euler product satisfies
	\[
		\sum_{\mathfrak p\mid\mathfrak a\mathfrak f}
		\left|\frac{\log\norm(\mathfrak p)}
		{\norm(\mathfrak p)^s-1}\right|
		\ll \log_2(M)
	\]
	by the second estimate of Lemma~\ref{lem:uniform Mertens}, since
	\(|s-1|\ll1/\log M=\littleo(b)\). We also have
	\( \zeta_K'/\zeta_K (s) = -(s-1)^{-1} + \bigO_K(1) \). Thus
    \[
	\begin{aligned}
		\biggl(\frac{1}{\zeta_{K,\mathfrak{a}\mathfrak{f}}}\biggr)'(s)
		&= -\biggl[
		\sum_{\mathfrak p\mid\mathfrak a\mathfrak f}
		\frac{\log\norm(\mathfrak p)}{\norm(\mathfrak p)^s-1}
		+\frac{\zeta_K'}{\zeta_K}(s)
		\biggr]\frac{1}{\zeta_{K,\mathfrak a\mathfrak f}}(s)\\
		&= [(s-1)^{-1} + \bigO(\log_2(M))] \times \frac{(s-1)/\gamma_{-1}(K) + \bigO((s-1)^2)}{\prod_{\mathfrak{p \mid af}} (1 - \inorm[-s]{p})} \\
		&= \frac{1}{\gamma_{-1}(K) \prod_{\mathfrak{p} \mid \mathfrak{a}\mathfrak{f}} (1-\norm(\mathfrak{p})^{-s})} + \bigO(\log^{-1}(M) \log_2^2(M)).
	\end{aligned}
\]
For a general number of derivatives, by Cauchy's theorem applied to a circle of radius \( \asymp 1 / \log_2 (M) \) about \( s=1 \),
\[
	(1/\zeta_{K,\mathfrak{a}\mathfrak{f}})^{(q+k)}(s) \ll_{K,\theta,q,k} \log_2^{q+k}(M)
\]
These estimates imply that the error term in this residue sum is
\[
\ll_{K,\theta,h,k}\log^{k-2}(M)\log_2^2(M),
\]
which can be absorbed into \(\log^k(M)\mathcal E_1(\mathfrak a)\). For \(k=0\), the
main term from the sum over \(q\) is
\[
	\frac{1}{\gamma_{-1}(K)\prod_{\mathfrak{p} \mid \mathfrak{a}\mathfrak{f}} (1-\norm(\mathfrak{p})^{-s})} \times \biggl((s-1) \frac{\log^\ell (M/\norm(\mathfrak{a}))}{\ell!} + \frac{\log^{\ell-1} (M /\norm(\mathfrak{a}))}{(\ell-1)!}\biggr) %
\]
Summing over \( \ell \) yields \( A_0 \). The evaluations for \(k=1\) and \(k \ge 2\) are similar. \( \hfill \square \)

\begin{numberedcorollary}\label{lem:T0-T1-evaluation} For integral ideals \(\mathfrak l\) and \(\mathfrak d\), define
    \[
    \begin{aligned}
        T_0(\mathfrak{l})
        &= \sum_{(\mathfrak{m}, \mathfrak{lf}) = 1} \frac{\widetilde{\mu}(\mathfrak{lm})}{\norm(\mathfrak{m})}, \\
        T_1(\mathfrak l,\mathfrak d)
        &=\sum_{(\mathfrak{m,lf})=1}
        \frac{\mmu{lm}}{\inorm{m}}
        \log\frac{\inorm{f}}{\inorm{dm}}.
    \end{aligned}
    \]
    Uniformly in \( \mathfrak{l} \) coprime to \(\mathfrak f\) with \(\norm(\mathfrak{l}) \le M \) and
    \(\mathfrak d \) dividing \( \mathfrak l\),  we have
		\begin{align*}
			T_0(\mathfrak{l})
			&= \frac{1}{\gamma_{-1}(K) }\frac{\norm(\mathfrak{f})}{\phi( \mathfrak{f})} \frac{\mu_1(\mathfrak{l})}{\log(M)} + \bigO(\mathcal{E}_1(\mathfrak{l})), \\
			T_1(\mathfrak{l}, \mathfrak{d})
			&= \frac{1}{\gamma_{-1}(K)}\frac{\norm(\mathfrak{f})}{\phi( \mathfrak{f})}  \frac{\mu_1(\mathfrak{l})}{\log(M)} \log \frac{M \norm(\mathfrak{f})}{\inorm{ld}} + \bigO(\log(M) \mathcal{E}_1(\mathfrak{l})),
		\end{align*}
	where \( \mathcal{E}_1(\mathfrak{l}) \) is the error from Lemma~\ref{lem:conrey-1}, and
    \[
        \mu_1(\mathfrak{l}) = \frac{\mu(\mathfrak{l})}{\prod_{\mathfrak{p} \mid \mathfrak{l}} (1 - \norm(\mathfrak{p})^{-1})}.
    \]
\end{numberedcorollary}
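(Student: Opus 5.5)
We reduce both sums to Lemma~\ref{lem:conrey-1} with $\mathfrak a=\mathfrak l$ and $h(x)=x$ (which satisfies $h(0)=0$), evaluated at $s=1$ or differentiated once in $s$.

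\emph{Reduction.} Since $\mu(\mathfrak l\mathfrak m)=\mu(\mathfrak l)\mu(\mathfrak m)$ whenever $(\mathfrak m,\mathfrak l)=1$, we have
\[
\widetilde\mu(\mathfrak l\mathfrak m)=\mu(\mathfrak l)\mu(\mathfrak m)\biggl(\frac{\log(M/\norm(\mathfrak l\mathfrak m))}{\log M}\biggr)_+ .
\]
Hence
\[
T_0(\mathfrak l)=\mu(\mathfrak l)\,\frac{1}{\zeta_{K,\mathfrak l\mathfrak f}}(1;h).
\]
For $T_1$, write
\[
\log\frac{\norm(\mathfrak f)}{\norm(\mathfrak d\mathfrak m)}=\log\frac{\norm(\mathfrak f)}{\norm(\mathfrak d)}-\log\norm(\mathfrak m).
\]
The factor $\norm(\mathfrak m)^{-s}\log\norm(\mathfrak m)$ is $-\partial_s\norm(\mathfrak m)^{-s}$, so
\[
T_1(\mathfrak l,\mathfrak d)=\mu(\mathfrak l)\Bigl[\log\frac{\norm(\mathfrak f)}{\norm(\mathfrak d)}\,\frac{1}{\zeta_{K,\mathfrak l\mathfrak f}}(1;h)+\Bigl(\frac{1}{\zeta_{K,\mathfrak l\mathfrak f}}\Bigr)'(1;h)\Bigr].
\]
The point $s=1$ certainly satisfies $|s-1|\ll1/\log M$, and $\norm(\mathfrak l)\le M$, so Lemma~\ref{lem:conrey-1} applies uniformly.

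\emph{Main terms.} Since $(\mathfrak l,\mathfrak f)=1$, the finite Euler product at $s=1$ factors as
\[
\prod_{\mathfrak p\mid\mathfrak l\mathfrak f}(1-\norm(\mathfrak p)^{-1})^{-1}=\frac{\norm(\mathfrak f)}{\phi(\mathfrak f)}\prod_{\mathfrak p\mid\mathfrak l}(1-\norm(\mathfrak p)^{-1})^{-1}.
\]
Multiplied by $\mu(\mathfrak l)$, this produces $\frac{\norm(\mathfrak f)}{\phi(\mathfrak f)}\mu_1(\mathfrak l)$.

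For $h(x)=x$ at $s=1$, the lemma gives $A_0=1/(\gamma_{-1}(K)\log M)$, because the $(s-1)$ term vanishes and $h'=1$. This yields the main term of $T_0$, and the error term $\bigO(\mathcal E_1(\mathfrak l))$ follows since $|\mu(\mathfrak l)|\le1$.

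For $T_1$, the $k=1$ case gives
\[
A_1=\frac{1}{\gamma_{-1}(K)}\,\frac{\log(M/\norm(\mathfrak l))}{\log M},
\]
with error $\bigO(\log M\,\mathcal E_1(\mathfrak l))$. Adding this to $\log(\norm(\mathfrak f)/\norm(\mathfrak d))$ times the $T_0$ main term gives
\[
\frac{1}{\gamma_{-1}(K)}\frac{\norm(\mathfrak f)}{\phi(\mathfrak f)}\frac{\mu_1(\mathfrak l)}{\log M}\Bigl[\log\frac{\norm(\mathfrak f)}{\norm(\mathfrak d)}+\log\frac{M}{\norm(\mathfrak l)}\Bigr],
\]
which is exactly the claimed $\log\bigl(M\norm(\mathfrak f)/\norm(\mathfrak l\mathfrak d)\bigr)$.

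\emph{Error in $T_1$.} The remaining error is $\log(\norm(\mathfrak f)/\norm(\mathfrak d))\cdot\bigO(\mathcal E_1(\mathfrak l))$. Since $\norm(\mathfrak d)\ge1$ and $\log\norm(\mathfrak f)=\theta^{-1}\log M$, this is $\bigO_\theta(\log M\,\mathcal E_1(\mathfrak l))$. It is only bounded above, since $\norm(\mathfrak d)\le\norm(\mathfrak l)\le M$ keeps the logarithm at most $\ll\log M$ in absolute value either way.

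\emph{Main obstacle.} The only delicate point is to check that the derivative in Lemma~\ref{lem:conrey-1} is taken of the full function $\frac{1}{\zeta_{K,\mathfrak l\mathfrak f}}(s;h)$, with the weight depending only on $\mathfrak m$ and not on $s$. Then the sign convention works out: $(1/\zeta)'(1;h)=-\sum\mu(\mathfrak m)\log\norm(\mathfrak m)\,(\dots)/\norm(\mathfrak m)$, and this matches the $-\log\norm(\mathfrak m)$ appearing in $T_1$.
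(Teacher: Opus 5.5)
Your proposal is correct and follows the paper's proof. Like the paper, you factor $\widetilde\mu(\mathfrak l\mathfrak m)=\mu(\mathfrak l)\mu(\mathfrak m)h(\cdot)$ with $h(x)=x$, apply Lemma~\ref{lem:conrey-1} with $\mathfrak a=\mathfrak l$ at $s=1$ (case $k=0$ for $T_0$, cases $k=0,1$ for $T_1$ after splitting off $-\log\norm(\mathfrak m)$), and bound $\log(\norm(\mathfrak f)/\norm(\mathfrak d))\ll_\theta\log M$ in the error; your explicit treatment of the derivative's sign and of the Euler factor $\norm(\mathfrak f)/\phi(\mathfrak f)$ just fills in details the paper leaves implicit.
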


\begin{proof}
	For \((\mathfrak m,\mathfrak l)=1\), the identity \(\widetilde\mu(\mathfrak l\mathfrak m)=\mu(\mathfrak l)\mu(\mathfrak m)\log(M/\norm(\mathfrak l\mathfrak m))/\log M\) holds on the mollifier support. Thus the estimate for \(T_0(\mathfrak l)\) follows from Lemma~\ref{lem:conrey-1} for \(k=0\) with \( h(x) = x \), \(s = 1\), and \( \mathfrak{a} = \mathfrak{l} \). For \( T_1(\mathfrak{l},\mathfrak{d}) \), we write \( \log \frac{\norm(\mathfrak{f})}{\norm(\mathfrak{d}\mathfrak{m})} = \log \frac{\norm(\mathfrak{f})}{\norm(\mathfrak{d})} - \log \norm(\mathfrak{m}) \) and apply Lemma~\ref{lem:conrey-1} with \( k=0,1 \). Since \(\norm(\mathfrak d)\le M\) and \( M = \norm(\mathfrak{f})^\theta \), the first logarithm on the right hand side satisfies
    \( \log (\norm(\mathfrak{f})/\norm(\mathfrak{d})) \ll_\theta \log M\).
    This yields the error term for \( T_1(\mathfrak{l},\mathfrak{d}) \).
\end{proof}

\subsection{Mean values at \texorpdfstring{\( s=1 \)}{s=1}}

\begin{lemma}\label{lem:reciprocal coprime mean value}
For any number field \(K\), nonzero integral ideal \(\mathfrak f\), and \(x\ge1\),
\[
\begin{aligned}
\ssum_{\norm(\mathfrak a)\le x\\(\mathfrak a,\mathfrak f)=1}
\frac1{\norm(\mathfrak a)}
&=\frac{\phi(\mathfrak f)}{\norm(\mathfrak f)}
\left\{\gamma_{-1}(K)\log x+\gamma_0(K)
+\gamma_{-1}(K)\eta(\mathfrak f)\right\}\\
&\quad+\bigO_K\!\left(\mathcal E_2(K,\mathfrak f)
 x^{-1/[K:\mathbb Q]}\right),
\end{aligned}
\]
where
\begin{equation}\label{eq:definition of E2}
\mathcal E_2(K,\mathfrak f)
=\prod_{\mathfrak p\mid\mathfrak f}
\left(1+\norm(\mathfrak p)^{-1+1/[K:\mathbb Q]}\right).
\end{equation}
If \([K:\mathbb Q]\ge2\), then
\(\log\mathcal E_2(K,\mathfrak f)
\ll_K \log^{[1/[K:\mathbb{Q}]}\norm(\mathfrak f)/
\log_2 \norm(\mathfrak f) \).
For \(K=\mathbb Q\), we have \(\mathcal E_2(\mathbb Q,f)=2^{\omega(f)}\),
where \(\omega(f)\) is the number of distinct prime divisors of \(f\).
\end{lemma}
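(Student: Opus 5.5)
We will deduce the coprime sum from the full sum \eqref{eq:ideal-density} by M\"obius inversion over divisors of \(\mathfrak f\). Write \(n=[K:\mathbb Q]\). Removing the coprimality condition gives
\[
\ssum_{\norm(\mathfrak a)\le x\\(\mathfrak a,\mathfrak f)=1}\frac1{\norm(\mathfrak a)}
=\sum_{\mathfrak d\mid\mathfrak f}\frac{\mu(\mathfrak d)}{\norm(\mathfrak d)}
\sum_{\norm(\mathfrak b)\le x/\norm(\mathfrak d)}\frac1{\norm(\mathfrak b)} .
\]
Only squarefree \(\mathfrak d\) contribute, i.e.\ products of distinct primes dividing \(\mathfrak f\).

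The first step is the unrestricted mean value. Partial summation applied to \eqref{eq:ideal-density} gives, for \(y\ge1\),
\[
\sum_{\norm(\mathfrak b)\le y}\frac1{\norm(\mathfrak b)}=\gamma_{-1}(K)\log y+c_K+\bigO_K(y^{-1/n}).
\]
The constant \(c_K\) is identified with \(\gamma_0(K)\) by comparing with the Laurent expansion of \(\zeta_K(s)\) at \(s=1\). The standard route is to write \(\zeta_K(s)=s\int_1^\infty A(u)u^{-s-1}\,du\) with \(A(u)=\gamma_{-1}(K)u+R(u)\) and \(R(u)\ll u^{1-1/n}\), and then read off the constant term. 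For \(y<1\) the sum is empty, so the formula holds there with error \(\ll 1+|\log y|\) in place of \(y^{-1/n}\).

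The second step inserts this expansion for the divisors with \(\norm(\mathfrak d)\le x\). The main terms give
\[
\sum_{\mathfrak d\mid\mathfrak f}\frac{\mu(\mathfrak d)}{\norm(\mathfrak d)}\Bigl(\gamma_{-1}(K)\log x-\gamma_{-1}(K)\log\norm(\mathfrak d)+\gamma_0(K)\Bigr).
\]
We then use the two identities
\[
\sum_{\mathfrak d\mid\mathfrak f}\frac{\mu(\mathfrak d)}{\norm(\mathfrak d)}=\frac{\phi(\mathfrak f)}{\norm(\mathfrak f)},
\qquad
-\sum_{\mathfrak d\mid\mathfrak f}\frac{\mu(\mathfrak d)\log\norm(\mathfrak d)}{\norm(\mathfrak d)}=\frac{\phi(\mathfrak f)}{\norm(\mathfrak f)}\,\eta(\mathfrak f).
\]
The second follows by logarithmically differentiating \(\prod_{\mathfrak p\mid\mathfrak f}(1-\norm(\mathfrak p)^{-s})\) at \(s=1\). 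Together they give exactly the stated main term.

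What remains are two errors.
\begin{enumerate}
\item The first is the completion of the main-term sum to divisors with \(\norm(\mathfrak d)>x\).
\item The second is the accumulated error \(\sum_{\mathfrak d}\norm(\mathfrak d)^{-1}(x/\norm(\mathfrak d))^{-1/n}\), taken over divisors with \(\norm(\mathfrak d)\le x\).
\end{enumerate}
The second error equals \(x^{-1/n}\sum_{\mathfrak d}\norm(\mathfrak d)^{-1+1/n}\le x^{-1/n}\mathcal E_2(K,\mathfrak f)\). For the first, a divisor with \(\norm(\mathfrak d)>x\) has \(1+\log(\norm(\mathfrak d)/x)\ll (\norm(\mathfrak d)/x)^{1/n}\). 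Its term is therefore \(\ll \norm(\mathfrak d)^{-1}(\norm(\mathfrak d)/x)^{1/n}\), which is again bounded by \(x^{-1/n}\norm(\mathfrak d)^{-1+1/n}\). The truncated range \(\norm(\mathfrak d)>x\), where the inner sum may vanish or be partial, is handled the same way. This bookkeeping, making sure that every error is dominated by \(x^{-1/n}\norm(\mathfrak d)^{-1+1/n}\), is the only delicate point. It works because \(\log u\ll u^{1/n}\).

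Finally we bound \(\mathcal E_2\). For \(K=\mathbb Q\) the exponent \(-1+1/n\) is \(0\), so each factor equals \(2\) and \(\mathcal E_2(\mathbb Q,f)=2^{\omega(f)}\). For \(n\ge2\) we have \(\log\mathcal E_2\le\sum_{\mathfrak p\mid\mathfrak f}\norm(\mathfrak p)^{-1+1/n}\). This sum is largest when the primes dividing \(\mathfrak f\) are the smallest ones, and they must satisfy \(\sum\log\norm(\mathfrak p)\le\log\norm(\mathfrak f)\). So the primes involved have norm at most \(y\asymp\log\norm(\mathfrak f)\), up to bounded multiplicity. By the prime ideal theorem and partial summation,
\[
\sum_{\norm(\mathfrak p)\le y}\norm(\mathfrak p)^{-1+1/n}\ll \frac{y^{1/n}}{\log y}.
\]
Substituting \(y\asymp\log\norm(\mathfrak f)\) gives the claimed bound \(\log^{1/n}\norm(\mathfrak f)/\log_2\norm(\mathfrak f)\).
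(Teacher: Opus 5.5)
Your proposal is correct and follows the paper's proof essentially step for step. The shared steps are M\"obius inversion over $\mathfrak d\mid\mathfrak f$, the partial-summation asymptotic with its constant identified as $\gamma_0(K)$ via the Mellin representation of $\zeta_K$, the two divisor-sum identities from differentiating $\prod_{\mathfrak p\mid\mathfrak f}(1-\norm(\mathfrak p)^{-s})$ at $s=1$, and the total error $x^{-1/n}\sum_{\mathfrak d}\norm(\mathfrak d)^{-1+1/n}\le x^{-1/n}\mathcal E_2(K,\mathfrak f)$ with $n=[K:\mathbb Q]$.

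The differences are only in bookkeeping. The paper avoids your split at $\norm(\mathfrak d)=x$ by noting once that the unrestricted formula persists for $0<y<1$, since $1+|\log y|\ll y^{-1/n}$. It bounds $\log\mathcal E_2$ by splitting the primes at norm $\log\norm(\mathfrak f)$, as in Lemma~\ref{lem:uniform Mertens}, which makes your ``smallest primes'' comparison precise.
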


\begin{proof}
The ideal density asymptotic \eqref{eq:ideal-density} reads
\(\sum_{\norm(\mathfrak a)\le x}1=\gamma_{-1}(K)x+R_K(x)\), where
\(R_K(x)=\bigO_K(x^{1-1/[K:\mathbb Q]})\). Partial summation gives
\(\zeta_K(s)-\gamma_{-1}(K)/(s-1)
=\gamma_{-1}(K)+s\int_1^\infty R_K(t)t^{-s-1}\,\mathrm dt\).
Letting \(s\to1\), we obtain
\begin{equation}\label{eq:Stieltjes coefficients identity}
\gamma_0(K)=\gamma_{-1}(K)+\int_1^\infty\frac{R_K(t)}{t^2}\,\mathrm dt.
\end{equation}
A second partial summation therefore yields
\begin{equation}\label{eq:reciprocal sum}
\sum_{\norm(\mathfrak a)\le x}\frac1{\norm(\mathfrak a)}
=\gamma_{-1}(K)\log x+\gamma_0(K)
+\bigO_K(x^{-1/[K:\mathbb Q]}).
\end{equation}
This estimate also holds for \(0<x<1\), since the sum is empty and
\(1+|\log x|\ll_K x^{-1/[K:\mathbb Q]}\).
The sum may be restricted to ideals coprime to \( \mathfrak{f} \) via the Möbius identity,
	\[
		\begin{aligned}
        \ssum_{\norm(\mathfrak{a}) \le x \\ (\mathfrak{a}, \mathfrak{f}) = 1} \frac{1}{\norm(\mathfrak{a})}
		&= \ssum_{\norm(\mathfrak{a}) \le x} \frac{1}{\norm(\mathfrak{a})} \sum_{\mathfrak{d} \mid (\mathfrak{a}, \mathfrak{f})} \mu(\mathfrak{d}) \\
		&= \sum_{\mathfrak{d} \mid \mathfrak{f}} \frac{\mu(\mathfrak{d})}{\norm(\mathfrak{d})} \sum_{\norm(\mathfrak{a}) \le x/\norm(\mathfrak{d})} \frac{1}{\norm(\mathfrak{a})}.
		\end{aligned}
	\]
	Applying \eqref{eq:reciprocal sum} at \(x/\norm(\mathfrak d)>0\) for every
\(\mathfrak d\mid\mathfrak f\) gives
    \begin{align*}
		\ssum_{\norm(\mathfrak{a}) \le x \\ (\mathfrak{a}, \mathfrak{f}) = 1} \frac{1}{\norm(\mathfrak{a})}
        &= (\gamma_{-1}(K) \log x + \gamma_0(K))
        \sum_{\mathfrak{d} \mid \mathfrak{f}} \frac{\mu(\mathfrak{d})}{\norm(\mathfrak{d})}
        - \gamma_{-1}(K)\sum_{\mathfrak{d} \mid \mathfrak{f}}
        \frac{\mu(\mathfrak{d})}{\norm(\mathfrak{d})} \log \norm(\mathfrak{d}) \\
        &\quad + \bigO_K\!\biggl(x^{-1/[K:\mathbb{Q}]}
        \sum_{\mathfrak{d} \mid \mathfrak{f}}
        \frac{\mu^2(\mathfrak{d})}{\norm(\mathfrak{d})^{1-1/[K:\mathbb{Q}]}}\biggr).
    \end{align*}
To evaluate the main terms, differentiate
\(\sum_{\mathfrak d\mid\mathfrak f}\mu(\mathfrak d)\norm(\mathfrak d)^{-s}
=\prod_{\mathfrak p\mid\mathfrak f}(1-\norm(\mathfrak p)^{-s})\)
and set \(s=1\). This gives respectively
\(\phi(\mathfrak f)/\norm(\mathfrak f)\) and
\(-\phi(\mathfrak f)\eta(\mathfrak f)/\norm(\mathfrak f)\), proving the main term. The divisor sum in the error term equals \(\mathcal E_2(K,\mathfrak f)\) by its Euler product. The bound for \(K \neq \mathbb{Q} \) follows from an application of the prime ideal theorem as in the proof of Lemma~\ref{lem:uniform Mertens}, while for \(K=\mathbb Q\) follows directly from that product.
\end{proof}

\begin{lemma}\label{lem:dirichlet-factorization}
	Let \( \mathfrak{f} \) be an integral ideal of \( K \). Let \( g \) be a multiplicative function on integral ideals, supported on squarefree ideals coprime to \( \mathfrak{f} \). Suppose that there is \(\delta > 0\) such that \(|g(\mathfrak p)-1| \ll \norm(\mathfrak p)^{-\delta}\), the implicit constant being uniform over prime ideals \(\mathfrak p\nmid\mathfrak f\).

	The function
	\[
		H(s;g) \coloneqq \zeta_{K,\mathfrak{f}}(s)^{-1} \prod_{\mathfrak{p} \nmid \mathfrak{f}} \biggl( 1 + \frac{g(\mathfrak{p})}{\norm(\mathfrak{p})^s}\biggr),
	\]
	initially defined on \( \Re s > 1 \), has Dirichlet series \[ H(s;g) = \sum_\mathfrak{m} h(\mathfrak{m})/\norm(\mathfrak{m})^s \] which converges absolutely for \( \Re s > \max(1-\delta,1/2) \).
    Moreover, \( g \) is equal to the Dirichlet convolution
    \[
    g(\mathfrak{l}) = \ssum_{\mathfrak{am} = \mathfrak{l} \\ (\mathfrak{a}, \mathfrak{f}) = 1}h(\mathfrak{m}). \]
\end{lemma}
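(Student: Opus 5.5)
The plan is to work prime by prime with Euler products and then read off both claims from the local factors.

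First, since \(g\) is multiplicative and supported on squarefree ideals coprime to \(\mathfrak f\), we have, for \(\Re s>1\),
\[
\sum_{\mathfrak l} g(\mathfrak l)\norm(\mathfrak l)^{-s}=\prod_{\mathfrak p\nmid\mathfrak f}\bigl(1+g(\mathfrak p)\norm(\mathfrak p)^{-s}\bigr),
\]
which converges absolutely there because \(g(\mathfrak p)\) is bounded. Also \(\zeta_{K,\mathfrak f}(s)^{-1}=\prod_{\mathfrak p\nmid\mathfrak f}(1-\norm(\mathfrak p)^{-s})\). So for \(\Re s>1\),
\[
H(s;g)=\prod_{\mathfrak p\nmid\mathfrak f}\bigl(1-\norm(\mathfrak p)^{-s}\bigr)\bigl(1+g(\mathfrak p)\norm(\mathfrak p)^{-s}\bigr)
=\prod_{\mathfrak p\nmid\mathfrak f}\Bigl(1+\frac{g(\mathfrak p)-1}{\norm(\mathfrak p)^{s}}-\frac{g(\mathfrak p)}{\norm(\mathfrak p)^{2s}}\Bigr).
\]
Define \(h\) to be the multiplicative function supported on cube-free ideals coprime to \(\mathfrak f\), with \(h(\mathfrak p)=g(\mathfrak p)-1\) and \(h(\mathfrak p^2)=-g(\mathfrak p)\). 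Then \(H(s;g)=\sum_{\mathfrak m}h(\mathfrak m)\norm(\mathfrak m)^{-s}\) formally.

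Next comes absolute convergence, which is the only analytic step. Put \(\sigma=\Re s\). Absolute convergence of \(\sum|h(\mathfrak m)|\norm(\mathfrak m)^{-\sigma}\) is equivalent to convergence of
\[
\prod_{\mathfrak p\nmid\mathfrak f}\Bigl(1+|g(\mathfrak p)-1|\norm(\mathfrak p)^{-\sigma}+|g(\mathfrak p)|\norm(\mathfrak p)^{-2\sigma}\Bigr),
\]
which in turn amounts to convergence of \(\sum_{\mathfrak p}\bigl(\norm(\mathfrak p)^{-\sigma-\delta}+\norm(\mathfrak p)^{-2\sigma}\bigr)\). Here we use the hypothesis \(|g(\mathfrak p)-1|\ll\norm(\mathfrak p)^{-\delta}\) and the boundedness of \(g(\mathfrak p)\). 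Since at most \([K:\mathbb Q]\) primes lie above each rational prime, \(\sum_{\mathfrak p}\norm(\mathfrak p)^{-u}\) converges for \(u>1\). So the sum converges exactly when \(\sigma+\delta>1\) and \(2\sigma>1\), that is, for \(\sigma>\max(1-\delta,1/2)\). The one point needing care is uniformity: the implied constant in the hypothesis is uniform in \(\mathfrak p\nmid\mathfrak f\), and this is what makes the comparison with \(\sum\norm(\mathfrak p)^{-u}\) legitimate.

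Finally, the convolution identity. For \(\Re s>1\) we have \(\zeta_{K,\mathfrak f}(s)H(s;g)=\sum_{\mathfrak l}g(\mathfrak l)\norm(\mathfrak l)^{-s}\), where all the series converge absolutely. The left side is the Dirichlet series of the convolution of \(\1_{(\mathfrak a,\mathfrak f)=1}\) with \(h\). By uniqueness of coefficients of absolutely convergent Dirichlet series over ideals (equivalently, grouping by norm and using uniqueness of ordinary Dirichlet series, or directly comparing local factors), we get
\[
g(\mathfrak l)=\sum_{\substack{\mathfrak a\mathfrak m=\mathfrak l\\(\mathfrak a,\mathfrak f)=1}}h(\mathfrak m).
\]
Alternatively, one can check this locally at each \(\mathfrak p^k\). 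Both sides are multiplicative. For \(\mathfrak p\mid\mathfrak f\) and \(k\ge1\) both sides vanish. For \(\mathfrak p\nmid\mathfrak f\) the right side equals \(\sum_{j\le k}h(\mathfrak p^j)\), which is \(1\) for \(k=0\), \(g(\mathfrak p)\) for \(k=1\), and \(0\) for \(k\ge2\), matching \(g\). I expect no real obstacle here; the convergence bookkeeping in the second step is the only substantive part.
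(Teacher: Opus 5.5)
Your proposal is correct and follows essentially the same route as the paper. You write the local factor as $1+(g(\mathfrak p)-1)\norm(\mathfrak p)^{-s}-g(\mathfrak p)\norm(\mathfrak p)^{-2s}$, bound its deviation from $1$ by $\norm(\mathfrak p)^{-\sigma-\delta}+\norm(\mathfrak p)^{-2\sigma}$ to get absolute convergence for $\sigma>\max(1-\delta,1/2)$, and read the convolution identity off the local factors; you just make explicit what the paper calls immediate (the multiplicative $h$ supported on cube-free ideals coprime to $\mathfrak f$, and the prime-power check of the identity).
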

\begin{proof} The local factor in the Euler product of \( H(s;g) \) expands as
	\[
		\left(1 + \frac{g(\mathfrak{p})}{\norm(\mathfrak{p})^s}\right)\!\left(1 - \frac{1}{\norm(\mathfrak{p})^s}\right) = 1 + \frac{g(\mathfrak{p}) - 1}{\norm(\mathfrak{p})^s} - \frac{g(\mathfrak{p})}{\norm(\mathfrak{p})^{2s}}.
	\]
	The deviation of this local factor
	from \( 1 \) is \( \ll \norm(\mathfrak{p})^{-\Re s -\delta} + \norm(\mathfrak{p})^{-2\Re s} \) , giving the domain of absolute convergence \( \Re s >
	\max(1-\delta, 1/2) \) as claimed. The formula for \( g \) is immediate.

\end{proof}

\begin{lemma}\label{lem:weighted reciprocal mean value}
Let \(g\) satisfy the hypotheses of Lemma~\ref{lem:dirichlet-factorization}
for some \(\delta>0\). For every fixed
\(0<\kappa<\min(1/[K:\mathbb Q],\delta,1/2)\) and \(x\ge1\),
\[
\sum_{\norm(\mathfrak l)\le x}\frac{g(\mathfrak l)}{\norm(\mathfrak l)}
=C_{K,\mathfrak f,\mathfrak g}\log x+D_{K,\mathfrak f,\mathfrak g}
+\bigO\!\left(\mathcal E_2(K,\mathfrak f)x^{-\kappa}
\log(\norm(\mathfrak f)x)\right),
\]
where
\[
\begin{aligned}
C_{K,\mathfrak f,\mathfrak g}
&=\frac{\phi(\mathfrak f)}{\norm(\mathfrak f)}\gamma_{-1}(K)H(1;g),\\
D_{K,\mathfrak f,\mathfrak g}
&=\frac{\phi(\mathfrak f)}{\norm(\mathfrak f)}
\left[\gamma_{-1}(K)H'(1;g)
+(\gamma_0(K)+\gamma_{-1}(K)\eta(\mathfrak f))H(1;g)\right].
\end{aligned}
\]
Here \(\mathcal E_2\) is defined in \eqref{eq:definition of E2}.
The implied constant depends only on \(K,\delta,\kappa\) and the uniform
constant in \(|g(\mathfrak p)-1|\ll\norm(\mathfrak p)^{-\delta}\).
\end{lemma}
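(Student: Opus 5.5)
Write \(g=1*h\) restricted as in Lemma~\ref{lem:dirichlet-factorization}: every \(\mathfrak l\) in the support of \(g\) factors as \(\mathfrak l=\mathfrak a\mathfrak m\) with \((\mathfrak a,\mathfrak f)=1\), and \(g(\mathfrak l)=\sum_{\mathfrak a\mathfrak m=\mathfrak l,(\mathfrak a,\mathfrak f)=1}h(\mathfrak m)\). Since \(g\) is supported on ideals coprime to \(\mathfrak f\), we may also take \(h\) supported on \(\mathfrak m\) coprime to \(\mathfrak f\) (its Euler factors at \(\mathfrak p\mid\mathfrak f\) are trivial). Interchanging sums gives
\[
\sum_{\norm(\mathfrak l)\le x}\frac{g(\mathfrak l)}{\norm(\mathfrak l)}
=\sum_{\norm(\mathfrak m)\le x}\frac{h(\mathfrak m)}{\norm(\mathfrak m)}
\ssum_{\norm(\mathfrak a)\le x/\norm(\mathfrak m)\\(\mathfrak a,\mathfrak f)=1}\frac1{\norm(\mathfrak a)}.
\]
We apply Lemma~\ref{lem:reciprocal coprime mean value} to the inner sum with \(x/\norm(\mathfrak m)\ge1\). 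This produces a main term
\[
\frac{\phi(\mathfrak f)}{\norm(\mathfrak f)}\sum_{\norm(\mathfrak m)\le x}\frac{h(\mathfrak m)}{\norm(\mathfrak m)}\Bigl\{\gamma_{-1}(K)\log x-\gamma_{-1}(K)\log\norm(\mathfrak m)+\gamma_0(K)+\gamma_{-1}(K)\eta(\mathfrak f)\Bigr\}
\]
and an error \(\ll\mathcal E_2(K,\mathfrak f)x^{-1/[K:\mathbb Q]}\sum_{\norm(\mathfrak m)\le x}|h(\mathfrak m)|\norm(\mathfrak m)^{-1+1/[K:\mathbb Q]}\).

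Next complete the \(\mathfrak m\)-sums. Since \(\sum h(\mathfrak m)\norm(\mathfrak m)^{-s}=H(s;g)\) converges absolutely for \(\Re s>\sigma_0:=\max(1-\delta,1/2)\), we have \(\sum_{\mathfrak m}h(\mathfrak m)/\norm(\mathfrak m)=H(1;g)\) and \(-\sum h(\mathfrak m)\log\norm(\mathfrak m)/\norm(\mathfrak m)=H'(1;g)\). These reproduce exactly \(C_{K,\mathfrak f,\mathfrak g}\log x+D_{K,\mathfrak f,\mathfrak g}\). The tails are handled by Rankin's trick. With \(\sigma=1-\kappa'\) for some \(\kappa<\kappa'<\min(\delta,1/2)\),
\[
\sum_{\norm(\mathfrak m)>x}\frac{|h(\mathfrak m)|}{\norm(\mathfrak m)}(\log x+\log\norm(\mathfrak m)+1+\eta(\mathfrak f))
\ll x^{-\kappa}\bigl(\log x+\eta(\mathfrak f)+1\bigr)\sum_{\mathfrak m}\frac{|h(\mathfrak m)|}{\norm(\mathfrak m)^{1-\kappa'}},
\]
absorbing \(\log\norm(\mathfrak m)\) into the gap \(\kappa'-\kappa\). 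Here \(\eta(\mathfrak f)\ll\log_2\norm(\mathfrak f)+1\ll\log(\norm(\mathfrak f)x)\) by Lemma~\ref{lem:uniform Mertens}. Similarly, for the error from Lemma~\ref{lem:reciprocal coprime mean value} we use \(\norm(\mathfrak m)^{1/[K:\mathbb Q]}\le x^{1/[K:\mathbb Q]-\kappa}\norm(\mathfrak m)^{\kappa}\), valid since \(\kappa<1/[K:\mathbb Q]\) and \(\norm(\mathfrak m)\le x\). This bounds that error by \(\mathcal E_2x^{-\kappa}\sum|h(\mathfrak m)|\norm(\mathfrak m)^{-1+\kappa}\).

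The main point needing care is \emph{uniformity in \(\mathfrak f\)}: the Dirichlet series \(\sum|h(\mathfrak m)|\norm(\mathfrak m)^{-(1-\kappa')}\) must be bounded independently of \(\mathfrak f\). We get this from the Euler product in the proof of Lemma~\ref{lem:dirichlet-factorization}. The local factors at \(\mathfrak p\nmid\mathfrak f\) give \(|h(\mathfrak p)|\le|g(\mathfrak p)-1|\ll\norm(\mathfrak p)^{-\delta}\) and \(|h(\mathfrak p^2)|\le|g(\mathfrak p)|\ll1\), with \(h\) vanishing on higher prime powers. Hence
\[
\sum_{\mathfrak m}\frac{|h(\mathfrak m)|}{\norm(\mathfrak m)^{1-\kappa'}}\le\prod_{\mathfrak p}\bigl(1+O(\norm(\mathfrak p)^{-1+\kappa'-\delta})+O(\norm(\mathfrak p)^{-2+2\kappa'})\bigr)\ll_{K,\delta,\kappa}1,
\]
since \(\kappa'<\delta\) and \(\kappa'<1/2\). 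The product over all primes dominates the one over \(\mathfrak p\nmid\mathfrak f\). Collecting the terms, and noting \(\mathcal E_2\ge1\), gives the claimed error \(\bigO(\mathcal E_2(K,\mathfrak f)x^{-\kappa}\log(\norm(\mathfrak f)x))\), with constants depending only on \(K,\delta,\kappa\) and the constant in \(|g(\mathfrak p)-1|\ll\norm(\mathfrak p)^{-\delta}\).
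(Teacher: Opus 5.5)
Your proposal is correct and follows the paper's proof essentially step for step: the factorization of $g$ through $h$, Lemma~\ref{lem:reciprocal coprime mean value} applied to the inner $\mathfrak a$-sum, and completion of the $\mathfrak m$-sums with $x^{-\kappa}$ tails; your Euler-product check that $\sum_{\mathfrak m}|h(\mathfrak m)|\norm(\mathfrak m)^{-1+\kappa'}$ is bounded uniformly in $\mathfrak f$ also spells out a uniformity the paper leaves implicit. The only soft spot is the final absorption $1+\log x+\eta(\mathfrak f)\ll\log(\norm(\mathfrak f)x)$, which the paper makes too; it fails when $\norm(\mathfrak f)x$ is near $1$ (at $\mathfrak f=\mathcal O_K$, $x=1$ the stated error vanishes, yet $g(\mathcal O_K)=1$ need not equal $D_{K,\mathfrak f,\mathfrak g}$), so the error should read $\log(2\norm(\mathfrak f)x)$, which is harmless in the applications where $\norm(\mathfrak f)\to\infty$.
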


\begin{proof}
We write \( g(\mathfrak{l}) = \sum_{\mathfrak{am} = \mathfrak{l}, (\mathfrak{a}, \mathfrak{f}) = 1} h(\mathfrak{m}) \) and apply Lemma~\ref{lem:reciprocal coprime mean value} to get
\begin{equation}\label{eq:weighted reciprocal coprime mean value expansion}
\begin{aligned}
\sum_{\norm(\mathfrak l)\le x}\frac{g(\mathfrak l)}{\norm(\mathfrak l)}
&= \sum_{\mathfrak{m}}  \frac{h(\mathfrak{m})}{\norm(\mathfrak{m})} \ssum_{\norm(\mathfrak a)\le x/\norm(\mathfrak m)\\ (\mathfrak a,\mathfrak f)=(1)} \frac{1}{\norm(\mathfrak a)} \\
&= \frac{\phi(\mathfrak f)}{\norm(\mathfrak f)}
\biggl\{(\gamma_{-1}(K)\log x+\gamma_0(K)+\gamma_{-1}(K)\eta(\mathfrak f))
\sum_{\norm(\mathfrak m)\le x}\frac{h(\mathfrak m)}{\norm(\mathfrak m)}\\
&\hspace{20mm}-\gamma_{-1}(K)
\sum_{\norm(\mathfrak m)\le x}\frac{h(\mathfrak m)\log\norm(\mathfrak m)}{\norm(\mathfrak m)}\biggr\}\\
&\quad+\bigO\!\left(\mathcal E_2(K,\mathfrak f)x^{-1/[K:\mathbb Q]}
\sum_{\norm(\mathfrak m)\le x}
\frac{|h(\mathfrak m)|}{\norm(\mathfrak m)^{1-1/[K:\mathbb Q]}}\right).
\end{aligned}
\end{equation}
Because \(\kappa<1/[K:\mathbb Q]\), the sum in the error is
\(\ll x^{1/[K:\mathbb Q]-\kappa}\).

For instance,
	\[
		\begin{aligned}
			\sum_{\norm(\mathfrak{m}) \le x} \frac{h(\mathfrak{m})}{\norm(\mathfrak{m})}
			&= \sum_{\mathfrak{m}} \frac{h(\mathfrak{m})}{\norm(\mathfrak{m})} - \sum_{\norm(\mathfrak{m}) > x} \frac{h(\mathfrak{m})}{\norm(\mathfrak{m})} \\
			&= H(1;g) - \sum_{\norm(\mathfrak{m}) > x} \frac{h(\mathfrak{m})}{\norm(\mathfrak{m})^{1-\kappa} \norm(\mathfrak{m})^\kappa} \\
			&= H(1;g) + \mathsf{O}_{\delta}(x^{-\kappa}).
		\end{aligned}
	\]
    Similarly
    \[
        \begin{aligned}
            \sum_{\norm(\mathfrak m)\le x}
\frac{h(\mathfrak m)\log\norm(\mathfrak m)}{\norm(\mathfrak m)}
&=-H'(1;g)+\bigO(x^{-\kappa}\log(2x)), \\
\sum_{\norm(\mathfrak m)\le x}
\frac{|h(\mathfrak m)|}{\norm(\mathfrak m)^{1-1/[K:\mathbb Q]}} &\ll x^{1/[K:\mathbb Q]-\kappa}.
        \end{aligned}
    \]
Substituting these estimates into
\eqref{eq:weighted reciprocal coprime mean value expansion} proves the result.
The factor involving \(\eta(\mathfrak f)\) is absorbed by
\(\eta(\mathfrak f)\ll_K\log_2 \norm(\mathfrak f) \), which follows from
the Mertens estimate \eqref{eq:uniform mertens log over norm}.
\end{proof}

\begin{lemma}\label{conrey2}
Suppose \([K:\mathbb Q]\ge2\), fix \(B>0\), and let \(g\) satisfy the
hypotheses of Lemma~\ref{lem:dirichlet-factorization}. For every fixed
integer \(j\ge0\), as \(x\to\infty\), uniformly for
\(\norm(\mathfrak f)\le x^B\),
\[
\ssum_{\norm(\mathfrak l)\le x\\(\mathfrak l,\mathfrak f)=1}
\frac{g(\mathfrak l)}{\norm(\mathfrak l)}
\left(\log\frac{x}{\norm(\mathfrak l)}\right)^j
=C_{K,\mathfrak f,\mathfrak g}\frac{\log^{j+1}x}{j+1}
+D_{K,\mathfrak f,\mathfrak g}\log^jx+\bigO(\log^jx).
\]
The constants \(C_{K,\mathfrak f,\mathfrak g}\) and
\(D_{K,\mathfrak f,\mathfrak g}\) are those of
Lemma~\ref{lem:weighted reciprocal mean value}. The implied constant
depends only on \(K,B,\delta,j\) and the uniform constant in the hypothesis
on \(g\). In particular, this applies at \(x=M=\norm(\mathfrak f)^\theta\)
for every fixed \(\theta>0\).
\end{lemma}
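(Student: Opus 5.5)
The plan is to reduce the statement to Lemma~\ref{lem:weighted reciprocal mean value} by partial summation in the variable \(\norm(\mathfrak l)\). First, the condition \((\mathfrak l,\mathfrak f)=1\) is automatic, since \(g\) is supported on ideals coprime to \(\mathfrak f\). Write
\[
S(y)=\sum_{\norm(\mathfrak l)\le y}\frac{g(\mathfrak l)}{\norm(\mathfrak l)}
=C\log y+D+R(y),
\qquad C=C_{K,\mathfrak f,\mathfrak g},\ D=D_{K,\mathfrak f,\mathfrak g}.
\]
For \(y\ge1\) Lemma~\ref{lem:weighted reciprocal mean value} gives \(R(y)\ll\mathcal E_2(K,\mathfrak f)\,y^{-\kappa}\log(\norm(\mathfrak f)y)\), and \(S(y)=0\) for \(y<1\). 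For \(j=0\) the claim reads \(S(x)=C\log x+D+\bigO(1)\). For \(j\ge1\), Stieltjes integration gives
\[
\sum_{\norm(\mathfrak l)\le x}\frac{g(\mathfrak l)}{\norm(\mathfrak l)}\Bigl(\log\frac{x}{\norm(\mathfrak l)}\Bigr)^j
=\int_{1^-}^{x}\Bigl(\log\frac xy\Bigr)^j\,\mathrm dS(y)
=j\int_1^x S(y)\Bigl(\log\frac xy\Bigr)^{j-1}\frac{\mathrm dy}{y}.
\]
The boundary term vanishes at \(y=x\). Inserting \(S(y)=C\log y+D+R(y)\) and substituting \(u=\log y\), the main terms become
\[
j\int_0^{\log x}(Cu+D)(\log x-u)^{j-1}\,\mathrm du=C\frac{\log^{j+1}x}{j+1}+D\log^jx,
\]
which is exactly the asserted main term.

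It remains to show that the remainder \(j\int_1^x R(y)(\log x/y)^{j-1}\,\mathrm dy/y\) is \(\ll\log^jx\). The naive bound gives
\[
\ll\log^{j-1}x\cdot\mathcal E_2(K,\mathfrak f)\int_1^\infty y^{-\kappa}\log(\norm(\mathfrak f)y)\frac{\mathrm dy}{y}
\ll_\kappa\log^{j-1}x\cdot\mathcal E_2(K,\mathfrak f)\log(\norm(\mathfrak f)+2).
\]
With \(\norm(\mathfrak f)\le x^B\) the last factor is \(\ll_B\log x\). So everything reduces to showing \(\mathcal E_2(K,\mathfrak f)\ll1\), which fails in general. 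By Lemma~\ref{lem:reciprocal coprime mean value}, \(\log\mathcal E_2\) can be as large as \(\log^{1/2}\norm(\mathfrak f)/\log_2\norm(\mathfrak f)\) when \([K:\mathbb Q]=2\). This is the main obstacle, and it is where the hypothesis \([K:\mathbb Q]\ge2\) must enter through the growth rate of \(\mathcal E_2\).

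To handle it, split the \(y\)-integral at \(y_0=\exp(\log^{1/2+\varepsilon}x)\), with \(\varepsilon>0\) small. On \(y\ge y_0\) the factor \(y^{-\kappa}\) beats \(\mathcal E_2\le\exp(\bigO(\log^{1/[K:\mathbb Q]}x))\le\exp(\bigO(\log^{1/2}x))\). The tail is therefore \(\ll\log^{j-1}x\cdot\log x\cdot\exp(\bigO(\log^{1/2}x)-\kappa\log^{1/2+\varepsilon}x)=\littleo(1)\). On \(1\le y\le y_0\) I will not use Lemma~\ref{lem:weighted reciprocal mean value} at all. Instead I bound \(S(y)\) trivially: since \(|g(\mathfrak p)|\le1+\bigO(\norm(\mathfrak p)^{-\delta})\), Mertens \eqref{eq:mertens reciprocal} gives \(|S(y)|\ll\prod_{\norm(\mathfrak p)\le y}(1+|g(\mathfrak p)|/\norm(\mathfrak p))\ll\log(y+2)\). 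Together with \(|C\log y+D|\ll\log(y+2)\), this yields \(|R(y)|\ll\log(y+2)\). Here \(C\ll1\) because \(H(1;g)\ll1\), and \(D\ll\log_2\norm(\mathfrak f)\ll\log_2x\) by the \(\eta\)-bound \eqref{eq:uniform mertens log over norm}.

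The short range \(1\le y\le y_0\) then contributes
\[
\ll\log^{j-1}x\int_1^{y_0}\frac{\log(y+2)+\log_2x}{y}\,\mathrm dy\ll\log^{j-1}x\cdot\log^{1+2\varepsilon}x.
\]
This is slightly too large. I therefore refine with the lemma's own error bound wherever it is smaller, namely for \(y\ge\exp(\log^{1/2+\varepsilon}\norm(\mathfrak f))\). Since \(\mathcal E_2\) is controlled by \(\norm(\mathfrak f)\) rather than \(x\), the bad range in \(\log y\) has length \(\ll\log^{1/2+\varepsilon}x\). Its contribution is \(\log^{j-1}x\cdot(\log^{1/2+\varepsilon}x)^2\ll\log^jx\) once \(\varepsilon<1/4\), which closes the argument uniformly in \(\norm(\mathfrak f)\le x^B\). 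The specialization \(x=M=\norm(\mathfrak f)^\theta\) is the case \(B=1/\theta\).
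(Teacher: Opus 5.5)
Your partial summation, the evaluation of the main terms, and the splitting of the \(y\)-range essentially match the paper: Lemma~\ref{lem:weighted reciprocal mean value} is used above the cut and a crude bound \(|S(y)|\ll\log(2y)\) below it. The gap is in the final estimate.

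With the cut at \(y_0=\exp(\log^{1/2+\varepsilon}x)\), the short range contributes
\[
\log^{j-1}x\cdot(\log^{1/2+\varepsilon}x)^2=\log^{j+2\varepsilon}x .
\]
This is not \(\bigO(\log^jx)\) for any \(\varepsilon>0\), so the condition \(\varepsilon<1/4\) does not rescue it. Your refinement, cutting instead at \(\exp(\log^{1/2+\varepsilon}\norm(\mathfrak f))\), changes nothing in the critical case \(\norm(\mathfrak f)\asymp x^B\). There the two cuts differ only by a constant factor in the exponent, and the bad range still has length \(\asymp\log^{1/2+\varepsilon}x\). As written, you only obtain the weaker error \(\bigO(\log^{j+2\varepsilon}x)\).

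The fix is to place the cut at \(\log y_0\asymp\log^{1/2}x\), with no \(\varepsilon\)-loss in the exponent.

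\begin{itemize}
\item Since \(\norm(\mathfrak f)\le x^B\), Lemma~\ref{lem:reciprocal coprime mean value} gives \(\log\mathcal E_2(K,\mathfrak f)\le c\log^{1/2}x\) for some \(c=c(K,B)\).
\item Cut at \(y_0=\exp(A\log^{1/2}x)\) with a fixed \(A>c/\kappa\). The range \(y\ge y_0\) then contributes \(\ll\log^jx\,\exp(-(\kappa A-c)\log^{1/2}x)=\littleo(1)\).
\item The range \(1\le y\le y_0\) contributes \(\ll\log^{j-1}x\,(A^2\log x+A\log^{1/2}x\log_2x)\ll\log^jx\).
\end{itemize}

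The paper does the same, with the cut at \(\exp((\log M)^{1/[K:\mathbb Q]})\). It uses the extra factor \(1/\log_2\norm(\mathfrak f)\) in the bound for \(\log\mathcal E_2\), so the error above the cut is \(\littleo(1)\) without an adjustable constant. The short range then costs \((\log M)^{2/[K:\mathbb Q]}\le\log M\), which is exactly where \([K:\mathbb Q]\ge2\) enters.

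A smaller point: for the crude bound on \(S(y)\), cite the product form \eqref{eq:mertens product}, giving \(\prod_{\norm(\mathfrak p)\le y}(1+1/\norm(\mathfrak p))\le\prod_{\norm(\mathfrak p)\le y}(1-1/\norm(\mathfrak p))^{-1}\ll\log y\). The asymptotic \eqref{eq:mertens reciprocal} alone only yields \((\log y)^{1+\littleo(1)}\), which would again spoil the short-range estimate.
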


\begin{proof}
Write \(G(M;j)\) for the sum with cutoff \(M\).
Lemma~\ref{lem:dirichlet-factorization} gives
\(H(1;g),H'(1;g)\ll1\). Hence
\(C_{K,\mathfrak f,\mathfrak g}\ll1\).
We also have \(D_{K,\mathfrak f,\mathfrak g}\ll1+\eta(\mathfrak f)\),
which is \(\ll\log_2M\) when \(\norm(\mathfrak f)\le M^B\).
The same convolution, together with
\(\sum_{\norm(\mathfrak a)\le t}\norm(\mathfrak a)^{-1}
\ll_K\log(2t)\), proves \(|G(t;0)|\ll\log(2t)\) for all \(t\ge1\).

Lemma~\ref{lem:reciprocal coprime mean value} gives
\(\log\mathcal E_2(K,\mathfrak f)
\ll_{K,B}(\log M)^{1/[K:\mathbb Q]}/\log_2M\).
Choose any fixed \(\kappa\) allowed by
Lemma~\ref{lem:weighted reciprocal mean value}. Its error tends to zero
uniformly for \(\exp((\log M)^{1/[K:\mathbb Q]})\le t\le M\), so
\begin{equation}\label{eq:uniform asymptotic formula for G for large x}
G(t;0)=C_{K,\mathfrak f,\mathfrak g}\log t
+D_{K,\mathfrak f,\mathfrak g}+\littleo(1)
\end{equation}
in this range. Taking \(t=M\) proves the case \(j=0\).
For \(j\ge1\), partial summation gives
\begin{equation}\label{eq:partial summation for G}
G(M;j)=j\int_1^M G(t;0)
\left(\log\frac Mt\right)^{j-1}\frac{\mathrm dt}{t}.
\end{equation}
The first main term integrates to
\(C_{K,\mathfrak f,\mathfrak g}\log^{j+1}M/(j+1)\).
The second contributes \(D_{K,\mathfrak f,\mathfrak g}\log^jM\).
On the upper interval, the uniform error in
\eqref{eq:uniform asymptotic formula for G for large x} contributes
\(o(\log^jM)\). Below this cutoff, subtract both main terms and use the
crude bound for \(G(t;0)\) and the bounds for the two constants to obtain
\[
\begin{aligned}
&\int_1^{\exp((\log M)^{1/[K:\mathbb Q]})}
\left|G(t;0)-C_{K,\mathfrak f,\mathfrak g}\log t-D_{K,\mathfrak f,\mathfrak g}\right|
\left(\log\frac Mt\right)^{j-1}\frac{\mathrm dt}{t}\\
&\qquad\ll(\log M)^{j-1}
\left((\log M)^{2/[K:\mathbb Q]}
+(\log_2M)(\log M)^{1/[K:\mathbb Q]}\right)
\ll\log^jM.
\end{aligned}
\]
The last inequality uses \([K:\mathbb Q]\ge2\), proving the lemma.

\end{proof}

\subsection{Lattice bounds} \label{sec:lattice bounds}

In the off-diagonal estimates we shall interpret the fractional ideals of \( K \) as lattices, so in this section we collect some lattice counting results. The Lipschitz principle (combined with Minkowski's second formula) states that for a lattice \( \Lambda \subseteq \mathbb{R}^n \), the \( n \)-dimensional ball of  radius \( \sqrt{X} \) centered at the origin contains approximately \( C_n X^{n/2}/(\lambda_1 \lambda_2 \cdots \lambda_n) \) lattice points, where \( C_n \) is the unit \( n \)-ball volume and the denominator is the product of the successive minima of \( \Lambda \). Here we give a simple version of the Lipschitz principle for \( n = 2 \); see \cite[Theorem 2.2]{widmer2012lipschitz} for one example of a general statement.

\begin{lemma}\label{lem:lattice-disk-count}
Let \(\Lambda\) be a lattice in \(\mathbb{R}^2 \) with shortest nonzero vector of length \( \lambda_1 = \min_{0 \neq w \in \Lambda} |w| \) (the first successive minimum). There are absolute constants \( C_1, C_2 > 0 \) such that for any \(z \in \mathbb{R}^2 \) and \(X > 0\), we have
\begin{align}
\#\{w\in z+\Lambda:|w|^2\le X\}
&\le C_1 \biggl( 1+\frac{X}{\lambda_1^2} \biggr), \label{eq:lattice bound with origin} \\
\#\{0\ne w\in\Lambda:|w|^2\le X\}
&\le C_2 \frac{X}{\lambda_1^2}. \label{eq:lattice bound without origin}
\end{align}
\end{lemma}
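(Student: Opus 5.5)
The plan is to use a reduced basis and a packing argument. Choose a Gauss-reduced basis \(v_1,v_2\) of \(\Lambda\). Then \(|v_1|=\lambda_1\), \(|v_2|=\lambda_2\ge\lambda_1\), and the angle between \(v_1\) and \(v_2\) lies in \([\pi/3,2\pi/3]\). Consequently
\[
|a v_1+b v_2|^2\ge \tfrac12(a^2\lambda_1^2+b^2\lambda_2^2)
\]
for all real \(a,b\). Only the bound \(|\sin\angle(v_1,v_2)|\ge\sqrt3/2\) is needed. The determinant then satisfies \(\det\Lambda=\lambda_1\lambda_2|\sin\angle|\asymp\lambda_1\lambda_2\).

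For \eqref{eq:lattice bound with origin} I will argue by disjoint discs. Around each point \(w\in z+\Lambda\), place an open disc of radius \(\lambda_1/2\). Two points of \(z+\Lambda\) differ by a nonzero element of \(\Lambda\), so they are at distance at least \(\lambda_1\), and the discs are pairwise disjoint. If \(|w|\le\sqrt X\), the disc lies in the disc of radius \(\sqrt X+\lambda_1/2\). Comparing areas gives
\[
N\cdot\pi\lambda_1^2/4\le\pi(\sqrt X+\lambda_1/2)^2\le 2\pi\bigl(X+\lambda_1^2/4\bigr),
\]
hence \(N\le 8X/\lambda_1^2+2\). This gives \eqref{eq:lattice bound with origin} with \(C_1=8\). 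The argument is uniform in the translate \(z\) and needs no reduced basis at all.

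For \eqref{eq:lattice bound without origin} the only new issue is the constant term. If \(X<\lambda_1^2\), the set is empty by the definition of \(\lambda_1\), so the bound holds trivially. If \(X\ge\lambda_1^2\), then \(1\le X/\lambda_1^2\). Applying \eqref{eq:lattice bound with origin} with \(z=0\) gives at most \(C_1(1+X/\lambda_1^2)\le 2C_1X/\lambda_1^2\), so \(C_2=2C_1\) works.

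There is no real obstacle here. The one point requiring care is that the constants must be absolute: they cannot depend on \(\Lambda\), on \(\lambda_2\), or on \(z\). The packing argument achieves this because it uses only the minimal distance \(\lambda_1\) and not the covolume. That is why I avoid the sharper Lipschitz-type count \(\pi X/\det\Lambda+\bigO(\sqrt X/\lambda_1+1)\). It would also work, but it introduces \(\lambda_2\), which is unnecessary for these upper bounds. The reduced-basis remark above can be dropped entirely, or kept only as motivation for the comment preceding the lemma about successive minima.
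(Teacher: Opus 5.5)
Your proposal is correct and follows essentially the same route as the paper. The paper also packs disjoint discs of radius \(\lambda_1/2\) inside the disc of radius \(\sqrt X+\lambda_1/2\) to get \(2+8X/\lambda_1^2\), then absorbs the constant using that the origin-free set is empty unless \(X\ge\lambda_1^2\); as you note, the reduced-basis remark is unnecessary.
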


\begin{proof}
	The (open) disks of radius \(\lambda_1/2\) centered at the points \(w \in z + \Lambda\), \(|w|^2 \le X\) are pairwise disjoint and are contained in a disk of radius \( \sqrt{X} + \lambda_1/2\). Comparing volumes and applying the elementary inequality \( (x+y)^2 \le 2(x^2 + y^2) \) gives
\[
	\# \{w \in z + \Lambda : |w|^2 \le X \} \le \frac{\pi(\sqrt{X} + \lambda_1/2)^2}{\pi (\lambda_1/2)^2} \le 2 + \frac{8X}{\lambda_1^2}.
\]
For \eqref{eq:lattice bound without origin}, take \( z=0 \) in \eqref{eq:lattice bound with origin} and note that the set in \eqref{eq:lattice bound without origin} is nonempty exactly when \( X \ge \lambda_1^2 \), so the \( 1 \) in \eqref{eq:lattice bound with origin} can be absorbed in this case.
\end{proof}

\begin{numberedcorollary}\label{lem:ideal-coset-lattice-count}
Assume that \(K\) is imaginary quadratic. Let \(\mathfrak a\) be a nonzero
fractional ideal. Uniformly for \(z\in K \) and \(X>0\),
\[
\begin{aligned}
\#\{\alpha\in z+ \mathfrak{a} : \norm(\alpha) \le X\}
&\ll 1+\frac{X}{\norm(\mathfrak a)},\\
\#\{0\ne\alpha\in\mathfrak a:\norm(\alpha)\le X\}
&\ll\frac{X}{\norm(\mathfrak a)}.
\end{aligned}
\]
Here \( z + \mathfrak{a} = \{z + \alpha : \alpha \in \mathfrak{a}\} \) is the translate of \( \mathfrak{a} \).
\end{numberedcorollary}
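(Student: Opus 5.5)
The plan is to embed \(K\) into \(\mathbb R^2\cong\mathbb C\) via a fixed complex embedding, so that for \(\alpha\in K\) we have \(\norm(\alpha)=|\alpha|^2\). A nonzero fractional ideal \(\mathfrak a\) then becomes a lattice \(\Lambda_{\mathfrak a}\subseteq\mathbb C\), and for \(z\in K\) the translate \(z+\mathfrak a\) becomes the coset \(z+\Lambda_{\mathfrak a}\). The condition \(\norm(\alpha)\le X\) is exactly \(|\alpha|^2\le X\). Both estimates therefore follow from Lemma~\ref{lem:lattice-disk-count} once we have the lower bound \(\lambda_1(\Lambda_{\mathfrak a})^2\gg_K \norm(\mathfrak a)\) for the first successive minimum.

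The central step is this bound on the shortest vector. Let \(0\ne\alpha\in\mathfrak a\). Then \((\alpha)\subseteq\mathfrak a\), so \((\alpha)\mathfrak a^{-1}\) is a nonzero integral ideal. Its norm is therefore at least \(1\), which gives \(\norm((\alpha))\ge\norm(\mathfrak a)\). Since \(\norm((\alpha))=|\norm(\alpha)|=|\alpha|^2\), we conclude \(|\alpha|^2\ge\norm(\mathfrak a)\), and hence \(\lambda_1^2\ge\norm(\mathfrak a)\). This holds with constant \(1\), uniformly in \(\mathfrak a\).

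Inserting this into \eqref{eq:lattice bound with origin} with \(z\) viewed as a point of \(\mathbb R^2\) gives \(\#\{\alpha\in z+\mathfrak a:\norm(\alpha)\le X\}\le C_1(1+X/\norm(\mathfrak a))\). Inserting it into \eqref{eq:lattice bound without origin} gives \(\#\{0\ne\alpha\in\mathfrak a:\norm(\alpha)\le X\}\le C_2X/\norm(\mathfrak a)\). The constants are absolute, so both bounds are uniform in \(z\) and \(X\), as well as in \(\mathfrak a\).

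Nothing here is a serious obstacle. The one point to check is that the identification \(\norm(\alpha)=|\alpha|^2\) under the embedding uses that \(K\) is imaginary quadratic. For a general field the norm is not a Euclidean norm, and the short-vector argument would fail, as the paper remarks.
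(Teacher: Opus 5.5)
Your proposal is correct and follows the paper's proof: embed \(K\) in \(\mathbb C\) so that \(\norm(\alpha)=|\sigma(\alpha)|^2\), show that the first successive minimum of the lattice \(\sigma(\mathfrak a)\) is at least \(\norm(\mathfrak a)^{1/2}\), and then apply Lemma~\ref{lem:lattice-disk-count}. Your argument for the shortest-vector bound, via the integrality of \((\alpha)\mathfrak a^{-1}\), supplies the detail behind a step the paper only asserts.
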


\begin{proof} Fix an embedding \(\sigma:K\hookrightarrow\mathbb C\). The image \(\sigma(\mathfrak a)\) is a lattice in \(\mathbb C = \mathbb{R}^2 \). Since \(K\) is imaginary quadratic, the norm is given by the Euclidean distance, \(|\norm(\alpha)| = |\sigma(\alpha)|^2\). So the shortest nonzero vector in \( \sigma(\mathfrak{a}) \) has length at least \( \norm(\mathfrak{a})^{1/2} \), and then both assertions follow from Lemma~\ref{lem:lattice-disk-count}.
\end{proof}

Here we rely on the field norm being Euclidean; the authors expect that Lemma~\ref{lem:ideal-coset-lattice-count} holds for arbitrary number fields when counting principal ideals instead of elements, by the stronger version of Lipschitz's principle aforementioned. Similarly they expect analogues of the following off-diagonal estimates to hold for other number fields.

\begin{lemma}\label{lem:ray-principal-ideal-count}
Let \(K\) be imaginary quadratic and let \(\mathfrak m,\mathfrak f,\mathfrak t\)
be nonzero integral ideals satisfying \((\mathfrak m,\mathfrak f)=1\) and
\(\mathfrak t\mid\mathfrak f\). For \(X>0\),
\begin{equation}\label{eq:ray-principal-ideal-count}
\#\{0\ne\mathfrak J\subseteq\mathfrak m:
\mathfrak J\in P_K(\mathfrak t),\ \norm(\mathfrak J)\le X\}
\ll_K1+\frac{X}{\norm(\mathfrak m)\norm(\mathfrak t)}.
\end{equation}
If \(V : [0,\infty) \to \mathbb{C}\) satisfies \eqref{eq:approximation to indicator}, then
\begin{equation}\label{eq:ray-principal-ideal-weighted}
\sum_{\mathfrak t\mid\mathfrak f}\phi(\mathfrak t)
\sum_{\substack{0\ne\mathfrak J\subseteq\mathfrak m\\
\mathcal O_K\ne\mathfrak J\in P_K(\mathfrak t)}}
\frac1{\norm(\mathfrak J)^{1/2}}
\left|V\left(\frac{\norm(\mathfrak J)}X\right)\right|
\ll_{K,V}d(\mathfrak f)X^{1/2}.
\end{equation}
\end{lemma}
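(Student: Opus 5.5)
The plan is to turn the count of ideals into a count of lattice points, and then bound the weighted sum by splitting at the scale $X$ and summing dyadically.

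\emph{First bound.} An ideal $\mathfrak J\in P_K(\mathfrak t)$ has the form $\mathfrak J=(\alpha)$ with $\alpha\equiv1\pmod{\mathfrak t}$. The coprimality conditions in that congruence are automatic here. Since $\mathfrak J\subseteq\mathfrak m$, the generator $\alpha$ is integral and lies in $\mathfrak m$. Because $\mathcal O_K^\times$ is finite, each such ideal has at most $|\mathcal O_K^\times|\ll_K1$ generators. We may choose the generator so that $\alpha\equiv1\pmod{\mathfrak t}$ holds in $\mathcal O_K$.

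Thus $\alpha$ lies in $\mathfrak m$ and in $1+\mathfrak t$. Since $(\mathfrak m,\mathfrak f)=1$ and $\mathfrak t\mid\mathfrak f$, we have $(\mathfrak m,\mathfrak t)=1$. The Chinese remainder theorem then says that the set $\mathfrak m\cap(1+\mathfrak t)$ is a single coset $z+\mathfrak m\mathfrak t$, with $\norm(\mathfrak m\mathfrak t)=\norm(\mathfrak m)\norm(\mathfrak t)$. Applying Corollary~\ref{lem:ideal-coset-lattice-count} to this coset with the bound $\norm(\alpha)\le X$ gives \eqref{eq:ray-principal-ideal-count}.

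\emph{Key refinement for the weighted sum.} If $\mathfrak J\ne\mathcal O_K$, then the generator $\alpha$ satisfies $\alpha\ne1$, and also $\alpha-1\ne0$ with $\alpha-1\in\mathfrak t$. Hence $\norm(\alpha-1)\ge\norm(\mathfrak t)$. Since the norm is the square of the complex absolute value, this gives
\[
|\alpha|\ge|\alpha-1|-1\ge\norm(\mathfrak t)^{1/2}-1.
\]
We also have $\norm(\alpha)\ge1$. So every nontrivial $\mathfrak J$ satisfies $\norm(\mathfrak J)\gg\norm(\mathfrak t)$, once $\norm(\mathfrak t)$ is larger than a constant; for bounded $\norm(\mathfrak t)$ the conclusion follows from $\norm(\mathfrak J)\ge1$.

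\emph{Bounding the inner sum.} We split the $\mathfrak J$-sum into dyadic ranges $Y<\norm(\mathfrak J)\le2Y$. Only ranges with $Y\gg\norm(\mathfrak t)$ contribute, by the refinement above. For $Y\le X$ we use $|V|\ll1$, and for $Y>X$ we use $|V(y)|\ll y^{-A}$ with $A=2$.

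In a range with $Y\le X$, the first bound (dropping the factor $\norm(\mathfrak m)\ge1$) gives at most $1+Y/\norm(\mathfrak t)$ ideals. Because $Y\gg\norm(\mathfrak t)$, this is $\ll Y/\norm(\mathfrak t)$. The range therefore contributes $\ll Y^{1/2}/\norm(\mathfrak t)$.

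Summing over dyadic $Y$ up to $X$, the inner sum is $\ll X^{1/2}/\norm(\mathfrak t)$. The tail $Y>X$ adds $\ll X^{1/2}/\norm(\mathfrak t)\sum_k2^{k/2}2^{-2k}$, which is of the same size. This requires $X\gg\norm(\mathfrak t)$. If instead $X\ll\norm(\mathfrak t)$, every term lies in the decaying range, and the inner sum is
\[
\ll\sum_{Y\gg\norm(\mathfrak t)}\frac{Y}{\norm(\mathfrak t)}\,Y^{-1/2}\Bigl(\frac XY\Bigr)^{A}\ll\frac{X^{1/2}}{\norm(\mathfrak t)}.
\]
This holds because $Y^{1/2-A}X^A\le X^{1/2}$ for $Y\ge X$.

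\emph{Conclusion.} Multiplying by $\phi(\mathfrak t)\le\norm(\mathfrak t)$, each divisor $\mathfrak t$ contributes $\ll X^{1/2}$. Summing over the $d(\mathfrak f)$ divisors of $\mathfrak f$ gives \eqref{eq:ray-principal-ideal-weighted}.

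The main obstacle is making sure the additive constant $1$ in the first bound is never paid without the factor $Y/\norm(\mathfrak t)$. Paying it alone would cost $\phi(\mathfrak t)$ per divisor, which is far too large. The lower bound $\norm(\mathfrak J)\gg\norm(\mathfrak t)$ for $\mathfrak J\ne\mathcal O_K$, which comes from $\alpha-1\in\mathfrak t\setminus\{0\}$, is exactly what removes this cost. Small $\norm(\mathfrak t)$ only affects the implied constants.
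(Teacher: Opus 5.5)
Your proposal is correct and follows essentially the same route as the paper. Both arguments use the CRT reduction of $\mathfrak m\cap(1+\mathfrak t)$ to a coset of $\mathfrak m\mathfrak t$ together with Corollary~\ref{lem:ideal-coset-lattice-count}, and both rely on the lower bound $\norm(\mathfrak t)\le 4\norm(\mathfrak J)$, coming from $0\ne\alpha-1\in\mathfrak t$, to absorb the additive $1$. The only difference is bookkeeping: you bound each $\mathfrak t$ separately by $X^{1/2}/\norm(\mathfrak t)$ and then multiply by $\phi(\mathfrak t)$, whereas the paper sums over $\mathfrak t$ inside dyadic blocks anchored at $\norm(\mathfrak m)$. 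Your case split at $X\asymp\norm(\mathfrak t)$ is unnecessary, since splitting the contributing ranges at $Y=X$ works uniformly.
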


\begin{proof}
For each \( \mathfrak{J} \) counted in \eqref{eq:ray-principal-ideal-count}, choose a generator \(\alpha\equiv1\pmod{\mathfrak t}\). Since \( \mathfrak{m} \), \( \mathfrak{t} \) are coprime, the solutions to \( \alpha \equiv 0 \) (mod \( \mathfrak{m} \)), \( \alpha \equiv 1 \) (mod \( \mathfrak{t} \)) form a translate of \(\mathfrak{mt}\). So \eqref{eq:ray-principal-ideal-count} follows from Corollary~\ref{lem:ideal-coset-lattice-count}.

For \eqref{eq:ray-principal-ideal-weighted}, every ideal \(\mathfrak J\) in the inner sum satisfies \(\norm(\mathfrak t)\le4\norm(\mathfrak J)\). Indeed, the off-diagonal condition
\(\mathfrak J\ne\mathcal O_K\) implies \(\alpha-1\ne0\).
Since \(\alpha-1\in\mathfrak t\) and \(\norm(\mathfrak J)\ge1\),
\[
\norm(\mathfrak t)\le\norm(\alpha-1)
\le(\sqrt{\norm(\mathfrak J)}+1)^2
\le4\norm(\mathfrak J).
\]
Now partial the inner sum into dyadic intervals
\(2^j\norm(\mathfrak m)\le\norm(\mathfrak J)
<2^{j+1}\norm(\mathfrak m)\), for \(j\ge0\). For each fixed \(\mathfrak t\), the contribution of a block is at most
\[
\ll_{K,V,A}
\left(1+\frac{2^j}{\norm(\mathfrak t)}\right)
\frac1{2^{j/2}\norm(\mathfrak m)^{1/2}}
\min\left(1,\left(\frac{2^j\norm(\mathfrak m)}X\right)^{-A}\right)
\]
by \eqref{eq:ray-principal-ideal-count} and the decay of \(V\). In this block we have \(\norm(\mathfrak{t}) < 2^{j+3} \norm(\mathfrak{m}) \), and
\[
\sum_{\substack{\mathfrak t\mid\mathfrak f\\
\norm(\mathfrak t)<2^{j+3}\norm(\mathfrak m)}}
\phi(\mathfrak t)\left(1+\frac{2^j}{\norm(\mathfrak t)}\right)
\ll2^jd(\mathfrak f)\norm(\mathfrak m).
\]
Thus the total contribution of the block is
\(\ll_{K,V,A}d(\mathfrak f)(2^j\norm(\mathfrak m))^{1/2}
\min(1,(2^j\norm(\mathfrak m)/X)^{-A})\).
The sum over all \( j \ge 0 \) is readily computed by splitting the sum at \( 2^j \norm(\mathfrak{m}) = X \) and choosing \(A>1/2\).
\end{proof}

\begin{lemma}\label{lem:ray-ideal-pair-lattice}
Let \(K\) be imaginary quadratic and let \(\mathfrak t\) be a nonzero
integral ideal. For \(X,Y\ge1\), we have the following count of pairs of integral ideals.
\begin{equation}\label{eq:ideal-pair-lattice-count}
\#\left\{(\mathfrak J_1,\mathfrak J_2):
\begin{array}{c}
0\ne\mathfrak J_1,\mathfrak J_2\subseteq\mathcal O_K,
\quad\mathcal O_K\ne\mathfrak J_1\mathfrak J_2^{-1}\in P_K(\mathfrak t)\\
\norm(\mathfrak J_1)\le X,\quad\norm(\mathfrak J_2)\le Y
\end{array}\right\}
\ll_K\frac{XY}{\norm(\mathfrak t)}.
\end{equation}
Suppose that \(W : [0,\infty) \to \mathbb{C} \) satisfies \eqref{eq:approximation to indicator}.
For \(X\ge1\) and \(0\le\eta<1/2\),
\begin{equation}\label{eq:ideal-pair-lattice-weighted}
\sum_{\mathcal O_K\ne\mathfrak J_1\mathfrak J_2^{-1}\in P_K(\mathfrak t)}
\norm(\mathfrak J_1\mathfrak J_2)^{-1/2+\eta}
\left|W\left(\frac{\norm(\mathfrak J_1\mathfrak J_2)}{X^2}\right)\right|
\ll_{K,W,\eta}\frac{X^{1+2\eta}\log(2X)}{\norm(\mathfrak t)}.
\end{equation}
(In fact, it is enough to satisfy those estimates for just one \(A > 1/2 + \eta\).)
\end{lemma}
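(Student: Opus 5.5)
The plan is to reduce \eqref{eq:ideal-pair-lattice-count} to the element counts of Corollary~\ref{lem:ideal-coset-lattice-count}, and then obtain \eqref{eq:ideal-pair-lattice-weighted} from it by a dyadic decomposition, in the same way as in the proof of Lemma~\ref{lem:ray-principal-ideal-count}.

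\textbf{The count \eqref{eq:ideal-pair-lattice-count}.} The condition \(\mathfrak J_1\mathfrak J_2^{-1}\in P_K(\mathfrak t)\) means \(\mathfrak J_1=(\alpha)\mathfrak J_2\) with \(\alpha\in K^{\mathfrak t,1}\), and \(\alpha\ne 1\) because \(\mathfrak J_1\mathfrak J_2^{-1}\ne\mathcal O_K\). In particular \(\mathfrak J_1,\mathfrak J_2\) are coprime to \(\mathfrak t\) and lie in the same ideal class.

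\emph{Step 1 (normalise the class).} Fix a finite set of integral representatives \(\mathfrak c\) of the ideal class group, with norms \(\ll_K 1\). Given a pair, choose \(\mathfrak c\) in the class inverse to that of \(\mathfrak J_2\). Then \(\mathfrak J_i\mathfrak c=(\beta_i)\) with \(\beta_i\in\mathfrak c\) and \(\norm(\beta_i)\ll X,Y\) respectively. Since \(\mathcal O_K^\times\) is finite, each ideal \(\mathfrak J_i\) corresponds to only \(\ll_K 1\) such \(\beta_i\).

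\emph{Step 2 (the congruence).} The ray condition says \(\beta_1/\beta_2\equiv u\pmod{\mathfrak t}\) for some unit \(u\). There are at most \(|\mathcal O_K^\times|\) choices of \(u\), so we fix one. For fixed \(\beta_2\), the admissible \(\beta_1\) have \(\beta_1-u\beta_2\in\mathfrak c\mathfrak t'\), where \(\mathfrak t'\) is \(\mathfrak t\) up to a bounded factor. The subtlety is that \(\mathfrak c\) need not be coprime to \(\mathfrak t\); we avoid this by choosing the representatives \(\mathfrak c\) coprime to \(\mathfrak t\), which is possible, and the norm bounds for such \(\mathfrak c\) can be kept uniform since each ideal class contains primes of bounded-type norm. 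Thus \(\beta_1\) lies in a translate of the lattice \(\mathfrak c\mathfrak t\). The off-diagonal condition \(\alpha\ne1\) excludes \(\beta_1=\beta_2\). I will also need to exclude \(\beta_1=u\beta_2\) with \(u\ne1\); this case gives \(\mathfrak J_1=\mathfrak J_2\), i.e.\ \(\mathfrak J_1\mathfrak J_2^{-1}=\mathcal O_K\), which is already excluded.

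\emph{Step 3 (counting).} So \(\gamma=\beta_1-u\beta_2\) is a \emph{nonzero} element of \(\mathfrak c\mathfrak t\) with \(\norm(\gamma)\le(\sqrt{X}+\sqrt{Y})^2\ll\max(X,Y)\). The map \((\beta_1,\beta_2)\mapsto(\beta_2,\gamma)\) is injective. Counting \(\beta_2\) by Corollary~\ref{lem:ideal-coset-lattice-count} gives \(\ll Y\) choices, and the second bound of that corollary gives \(\ll\max(X,Y)/\norm(\mathfrak t)\) choices of \(\gamma\). This yields only \(Y\max(X,Y)/\norm(\mathfrak t)\), which is too weak when \(X\ll Y\). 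To fix this, I will let the smaller of the two variables be the one counted freely: by symmetry, assume \(Y\le X\), and count \(\beta_2\) (\(\ll Y\) choices) and then \(\beta_1\) in a coset of \(\mathfrak c\mathfrak t\). The first bound of the corollary gives \(\ll 1+X/\norm(\mathfrak t)\) choices for \(\beta_1\), for a total of \(\ll Y+XY/\norm(\mathfrak t)\). The term \(Y\) is the real obstacle. It is handled by the nonvanishing of \(\gamma\): as in Lemma~\ref{lem:ray-principal-ideal-count}, we have \(\norm(\mathfrak t)\le\norm(\gamma)\ll X\), so \(Y\ll XY/\norm(\mathfrak t)\). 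Hence the count is \(\ll XY/\norm(\mathfrak t)\). If \(\norm(\mathfrak t)\gg X+Y\), the set is empty.

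\textbf{The weighted sum \eqref{eq:ideal-pair-lattice-weighted}.} Decompose into dyadic boxes \(\norm(\mathfrak J_1)\sim 2^i\), \(\norm(\mathfrak J_2)\sim 2^j\). By \eqref{eq:ideal-pair-lattice-count}, each box contributes
\[
\ll \frac{2^{i+j}}{\norm(\mathfrak t)}\,2^{(i+j)(-1/2+\eta)}\min\bigl(1,(2^{i+j}/X^2)^{-A}\bigr).
\]
Set \(n=i+j\). For each \(n\) there are \(n+1\) boxes, so the sum is \(\norm(\mathfrak t)^{-1}\sum_n (n+1)\,2^{n(1/2+\eta)}\min(1,(2^n/X^2)^{-A})\). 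For \(2^n\le X^2\) this is a geometric sum with a log weight, giving \(\ll X^{1+2\eta}\log(2X)\). For \(2^n>X^2\), choosing \(A>1/2+\eta\) makes the tail decay geometrically to the same order. This gives \eqref{eq:ideal-pair-lattice-weighted}, and it uses the decay estimate \eqref{eq:approximation to indicator} only for that single \(A\).
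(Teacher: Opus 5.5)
Your proof is correct in substance and follows the paper's route. Write $\beta_1/\beta_2=u\alpha$ with $\alpha\equiv1\pmod{\mathfrak t}$. Then your $\gamma=\beta_1-u\beta_2=u\beta_2(\alpha-1)$ is the paper's nonzero element $\alpha-1\in\mathfrak t\mathfrak J_2^{-1}$, rescaled into the integral lattice $\mathfrak c\mathfrak t$. Your dyadic treatment of \eqref{eq:ideal-pair-lattice-weighted} is the paper's, including the remark that one $A>1/2+\eta$ suffices.

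The paper avoids class-group representatives and the unit $u$ altogether. It applies the nonzero-point bound of Corollary~\ref{lem:ideal-coset-lattice-count} directly to the fractional ideal $\mathfrak t\mathfrak J_2^{-1}$, with norm cutoff $4X/\norm(\mathfrak J_2)$. Note also that once you assume $Y\le X$, your \emph{first} count already gives $Y\max(X,Y)/\norm(\mathfrak t)=XY/\norm(\mathfrak t)$. So the coset count followed by $\norm(\mathfrak t)\le 4X$ is valid but redundant.

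Two assertions in your Step~2 are false, although neither is actually needed.

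First, $\mathfrak J_1,\mathfrak J_2$ need not be coprime to $\mathfrak t$; only their quotient is. For example, $\mathfrak J_i=\mathfrak p\mathfrak a_i$ with $\mathfrak p\mid\mathfrak t$ is allowed.

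Second, you cannot in general choose representatives $\mathfrak c$ coprime to $\mathfrak t$ with norm bounded uniformly in $\mathfrak t$. Suppose $\mathfrak t$ is divisible by every prime of norm at most $T$. Then every integral $\mathfrak c$ in a nontrivial class that is coprime to $\mathfrak t$ has $\norm(\mathfrak c)>T$.

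The worry that prompted this choice is unfounded. For $\mathfrak p\mid\mathfrak t$,
\[ v_{\mathfrak p}(\gamma)=v_{\mathfrak p}(\beta_2)+v_{\mathfrak p}(\alpha-1)\ge v_{\mathfrak p}(\mathfrak c)+v_{\mathfrak p}(\mathfrak t), \]
because $(\beta_2)=\mathfrak J_2\mathfrak c$ with $\mathfrak J_2$ integral. For $\mathfrak p\nmid\mathfrak t$ one has $v_{\mathfrak p}(\gamma)\ge v_{\mathfrak p}(\mathfrak c)$, since $\beta_1,\beta_2\in\mathfrak c$. Hence $\gamma\in\mathfrak c\mathfrak t$ for \emph{any} integral $\mathfrak c$, and you may keep a fixed set of bounded-norm representatives.

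Alternatively, all your estimates are homogeneous in $\norm(\mathfrak c)$. The cutoffs are $X\norm(\mathfrak c)$ and $Y\norm(\mathfrak c)$, the lattices have norms $\norm(\mathfrak c)$ and $\norm(\mathfrak c\mathfrak t)$, and $\norm(\mathfrak c\mathfrak t)\le\norm(\gamma)\le 4X\norm(\mathfrak c)$ still yields $\norm(\mathfrak t)\le 4X$. So the size of $\mathfrak c$ never matters.
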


\begin{proof}
Exchanging \(\mathfrak J_1\) and \(\mathfrak J_2\) preserves the ray
condition, so we may assume \(X\ge Y\).
For each pair counted, choose a generator
\(\mathfrak J_1\mathfrak J_2^{-1}=(\alpha)\) with
\(\alpha\equiv1\pmod{\mathfrak t}\).
The element \(\alpha\) may be fractional, and
\(\alpha-1\ne0\) because \(\mathfrak J_1\mathfrak J_2^{-1}\ne\mathcal O_K\).
Moreover, \((\alpha-1)\mathfrak J_2\subseteq
\mathfrak J_1+\mathfrak J_2\subseteq\mathcal O_K\).
At every prime dividing \(\mathfrak t\), the congruence for \(\alpha\)
and the integrality of \(\mathfrak J_2\) give the additional divisibility
by \(\mathfrak t\). Thus
\(\alpha-1\in\mathfrak t\mathfrak J_2^{-1}\).
Since \(\norm(\mathfrak J_2)\le Y\le X\),
\[
\norm(\alpha-1)
\le\left(\sqrt{\frac{\norm(\mathfrak J_1)}{\norm(\mathfrak J_2)}}+1\right)^2
\le\frac{4X}{\norm(\mathfrak J_2)}.
\]
For each fixed \(\mathfrak J_2\), distinct \(\mathfrak J_1\) give distinct
chosen elements \(\alpha-1\). The nonzero lattice-point estimate in
Corollary~\ref{lem:ideal-coset-lattice-count} therefore bounds their number by
\[
\#\left\{0\ne\alpha-1\in\mathfrak t\mathfrak J_2^{-1}:
\norm(\alpha-1)\le\frac{4X}{\norm(\mathfrak J_2)}\right\}
\ll_K\frac{X/\norm(\mathfrak J_2)}{\norm(\mathfrak t\mathfrak J_2^{-1})}
=\frac{X}{\norm(\mathfrak t)}.
\]
There are \(\ll_KY\) choices of \(\mathfrak J_2\) by
Lemma~\ref{lem:ideal-density-mertens}, proving
\eqref{eq:ideal-pair-lattice-count}.

For \eqref{eq:ideal-pair-lattice-weighted}, divide both ideal norms into
dyadic intervals starting at \(1\).
The block \(2^i\le\norm(\mathfrak J_1)<2^{i+1}\),
\(2^j\le\norm(\mathfrak J_2)<2^{j+1}\), contributes at most
\[
\ll_{W,A,\eta}\frac{2^{(1/2+\eta)(i+j)}}{\norm(\mathfrak t)}
\min\left(1,\left(\frac{2^{i+j}}{X^2}\right)^{-A}\right)
\]
by \eqref{eq:ideal-pair-lattice-count} and the decay of \(W\).
For each \(k=i+j\) there are \(k+1\) blocks, so their total is bounded by
\[
\frac1{\norm(\mathfrak t)}
\sum_{k\ge0}(k+1)2^{(1/2+\eta)k}
\min\left(1,\left(\frac{2^k}{X^2}\right)^{-A}\right).
\]
We obtain the desired bound by by splitting at \(2^k=X^2\) and choosing \(A>1/2+\eta\).
\end{proof}

\section{Twisted first moment}\label{sec:first-moment}
Henceforth we assume that \(K\) is an imaginary quadratic field and \(\norm(\mathfrak f)>1\). Thus every primitive character modulo \(\mathfrak f\) is nontrivial.

\subsection{Some character sums} \label{sec:character sums}

\begin{lemma}
\label{lem:gauss-orthogonality}
Let \(\tau(\chi)\) be the Gauss sum occurring in the functional
equation of Lemma~\ref{thm:functional-equation}.
Then, for every nonzero integral ideal \(\mathfrak m\) satisfying
\((\mathfrak m,\mathfrak f)=(1)\),
\[
\begin{aligned}
\sum_{\chi\bmod\mathfrak f}^{*}
\chi(\mathfrak m)\tau(\chi)
&=
\sum_{\mathfrak s\mathfrak t=\mathfrak f}
\mu(\mathfrak s)h_K(\mathfrak t)
\sum_{\substack{\alpha\bmod\mathfrak f\\
(\alpha,\mathfrak f)=(1)\\
\mathfrak m\mathfrak e(\alpha)\in P_K(\mathfrak t)}}
\exp\!\left(
2\pi i\Tr(\alpha\beta)
\right).
\end{aligned}
\]
In particular, if \(\mathfrak m\mathfrak e\) is not principal, then
the right-hand side vanishes.
\end{lemma}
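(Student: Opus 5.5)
Substitute the explicit formula for \(\tau(\chi)\) and interchange the sum over \(\chi\) with the sum over \(\alpha\). Both \(\mathfrak e\) and \(\beta\) were chosen independently of \(\chi\), so for each primitive \(\chi\) modulo \(\mathfrak f\) we have
\[
\chi(\mathfrak m)\tau(\chi)
=\sum_{\substack{\alpha\bmod\mathfrak f\\(\alpha,\mathfrak f)=(1)}}
\chi(\mathfrak m)\chi(\mathfrak e)\chi_{\mathfrak f}(\alpha)
\exp\!\left(2\pi i\Tr(\alpha\beta)\right).
\]
For \(\alpha\in\mathcal O_K\) coprime to \(\mathfrak f\), the definition gives \(\chi_{\mathfrak f}(\alpha)=\chi((\alpha))\). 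Since \(\mathfrak m\), \(\mathfrak e\) and \((\alpha)\) all lie in \(I_K(\mathfrak f)\), multiplicativity of \(\chi\) on \(I_K(\mathfrak f)\) lets us write the product as \(\chi(\mathfrak m\mathfrak e(\alpha))\). Here \(\mathfrak e\) is coprime to \(\mathfrak f\) by construction, and \(\mathfrak m\) is coprime to \(\mathfrak f\) by hypothesis. Summing over primitive \(\chi\) and swapping the finite sums gives
\[
\sideset{}{^*}\sum_{\chi\bmod\mathfrak f}\chi(\mathfrak m)\tau(\chi)
=\sum_{\substack{\alpha\bmod\mathfrak f\\(\alpha,\mathfrak f)=(1)}}
\exp\!\left(2\pi i\Tr(\alpha\beta)\right)
\sideset{}{^*}\sum_{\chi\bmod\mathfrak f}\chi\bigl(\mathfrak m\mathfrak e(\alpha)\bigr).
\]

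Next apply Lemma~\ref{lem:primitive-ray-orthogonality} to the inner sum, taking \(\mathfrak m\mathfrak e(\alpha)\) as the ideal coprime to \(\mathfrak f\). This replaces the inner sum by \(\sum_{\mathfrak s\mathfrak t=\mathfrak f,\ \mathfrak m\mathfrak e(\alpha)\in P_K(\mathfrak t)}\mu(\mathfrak s)h_K(\mathfrak t)\). Interchanging the sums over \(\alpha\) and over the factorizations \(\mathfrak s\mathfrak t=\mathfrak f\) then gives exactly the stated formula.

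There is one point to check. The summand depends only on the residue class \(\alpha\bmod\mathfrak f\), and the condition \(\mathfrak m\mathfrak e(\alpha)\in P_K(\mathfrak t)\) must depend only on \(\alpha\bmod\mathfrak t\), hence on \(\alpha\bmod\mathfrak f\). To see this, suppose \(\alpha'\equiv\alpha\pmod{\mathfrak f}\). Then \(\alpha'/\alpha\equiv1\pmod{\mathfrak t}\), so \((\alpha')(\alpha)^{-1}\in P_K(\mathfrak t)\). The exponential is also well defined: \(\beta\in(\mathfrak D_K\mathfrak f)^{-1}\), so \(\Tr(\mathfrak f\beta)\subseteq\mathbb Z\).

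For the final assertion, every \(\mathfrak t\) in the sum satisfies \(P_K(\mathfrak t)\subseteq P_K\). Hence the condition \(\mathfrak m\mathfrak e(\alpha)\in P_K(\mathfrak t)\) forces \(\mathfrak m\mathfrak e=\mathfrak m\mathfrak e(\alpha)\cdot(\alpha)^{-1}\) to be principal. If \(\mathfrak m\mathfrak e\) is not principal, the inner sum over \(\alpha\) is empty for every factorization, and the right-hand side vanishes.

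There is no real obstacle here. The only care needed is to keep track of coprimality, which Lemma~\ref{lem:primitive-ray-orthogonality} requires, and to check that the ray-class condition is well defined on residue classes modulo \(\mathfrak f\).
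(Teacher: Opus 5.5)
Your proof is correct and follows the paper's argument: you expand \(\tau(\chi)\), swap the finite sums, apply Lemma~\ref{lem:primitive-ray-orthogonality} to \(\mathfrak m\mathfrak e(\alpha)\), and deduce the vanishing from \(P_K(\mathfrak t)\subseteq P_K\). The paper also multiplies by some \((\gamma)\) with \(\gamma\equiv1\pmod{\mathfrak f}\) to make the ideal integral, which is redundant because \(\mathfrak m\mathfrak e(\alpha)\) is already integral and the orthogonality lemma covers fractional ideals; your check that the ray condition and the exponential depend only on \(\alpha\bmod\mathfrak f\) is a detail the paper leaves implicit.
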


\begin{proof}
Expanding \(\tau(\chi)\) and interchanging the finite sums, the left-hand
side becomes
\[
\sum_{\substack{\alpha\bmod\mathfrak f\\
(\alpha,\mathfrak f)=(1)}}
e^{2\pi i\Tr(\alpha\beta)}
\sum_{\chi\bmod\mathfrak f}^{*}
\chi\!\left(\mathfrak m\mathfrak e(\alpha)\right).
\]

Fix \(\alpha\).  The fractional ideal
\(\mathfrak m\mathfrak e(\alpha)\) is coprime to \(\mathfrak f\).
By clearing its denominator away from \(\mathfrak f\) and applying the
Chinese remainder theorem, we may choose
\(\gamma\in\mathcal O_K\) such that
\(\gamma\equiv1\pmod{\mathfrak f}\) and
\((\gamma)\mathfrak m\mathfrak e(\alpha)\) is an integral ideal coprime
to \(\mathfrak f\).

Since
\((\gamma)\in P_K(\mathfrak f)\subseteq P_K(\mathfrak t)\) for every
\(\mathfrak t\mid\mathfrak f\), multiplication by \((\gamma)\) changes
neither the value of a character modulo \(\mathfrak f\) nor membership
in \(P_K(\mathfrak t)\).  Hence
Lemma~\ref{lem:primitive-ray-orthogonality}, applied to
\((\gamma)\mathfrak m\mathfrak e(\alpha)\), gives
\[
\sum_{\chi\bmod\mathfrak f}^{*}
\chi\!\left(\mathfrak m\mathfrak e(\alpha)\right)
=
\sum_{\substack{\mathfrak s\mathfrak t=\mathfrak f\\
\mathfrak m\mathfrak e(\alpha)\in P_K(\mathfrak t)}}
\mu(\mathfrak s)h_K(\mathfrak t).
\]
Substitution proves the asserted identity.

Finally, if
\(\mathfrak m\mathfrak e(\alpha)\in P_K(\mathfrak t)\), then
\(\mathfrak m\mathfrak e(\alpha)\) is principal, and therefore so is
\(\mathfrak m\mathfrak e\), since \((\alpha)\) is principal.  Thus, if
\(\mathfrak m\mathfrak e\) is not principal, no such \(\alpha\) exists
and the right-hand side vanishes.
\end{proof}

\begin{lemma}
\label{lem:gauss-sum-bound}
Under the assumptions of Lemma~\ref{lem:gauss-orthogonality}, for every
nonzero integral ideal \(\mathfrak m\) satisfying
\((\mathfrak m,\mathfrak f)=(1)\),
\[
\left|
\sum_{\chi\bmod\mathfrak f}^{*}
\chi(\mathfrak m)\tau(\chi)
\right|
\le
|\mathcal O_K^\times|
\sum_{\mathfrak t\mid\mathfrak f}
h_K(\mathfrak t)
\frac{\norm(\mathfrak f)}{\norm(\mathfrak t)}
\ll_K
d(\mathfrak f)\norm(\mathfrak f),
\]
where
 \(d(\mathfrak f)\) denotes the number of integral ideal divisors
of \(\mathfrak f\).
\end{lemma}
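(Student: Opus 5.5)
Start from the identity of Lemma~\ref{lem:gauss-orthogonality} and apply the triangle inequality. This discards the additive phases, so it suffices to bound, for each factorization \(\mathfrak s\mathfrak t=\mathfrak f\), the number of residues
\[
\alpha\bmod\mathfrak f,\qquad (\alpha,\mathfrak f)=(1),\qquad \mathfrak m\mathfrak e(\alpha)\in P_K(\mathfrak t).
\]
Since \(|\mu(\mathfrak s)|\le1\), the whole sum is then bounded by the sum over \(\mathfrak t\mid\mathfrak f\) of \(h_K(\mathfrak t)\) times this count. The target is
\[
\#\{\alpha\}\le|\mathcal O_K^\times|\,\frac{\norm(\mathfrak f)}{\norm(\mathfrak t)}.
\]

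To prove the count, suppose the set is nonempty and fix one element \(\alpha_0\). For any other element \(\alpha\), the ideal
\[
(\alpha\alpha_0^{-1})=\mathfrak m\mathfrak e(\alpha)\,\bigl(\mathfrak m\mathfrak e(\alpha_0)\bigr)^{-1}
\]
lies in \(P_K(\mathfrak t)\). Hence \(\alpha\alpha_0^{-1}=u\gamma\) with \(\gamma\equiv1\pmod{\mathfrak t}\) and \(u\in\mathcal O_K^\times\). Indeed, a generator \(\gamma\) of a ray class ideal can be chosen \(\equiv1\pmod{\mathfrak t}\), and any two generators differ by a unit. Since \(\alpha_0\) is a unit modulo \(\mathfrak f\), this gives
\[
\alpha\equiv u\alpha_0\pmod{\mathfrak t}.
\]
The \(u\) ranges over at most \(|\mathcal O_K^\times|\) values. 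For each \(u\), the classes \(\alpha\bmod\mathfrak f\) lying over a fixed class mod \(\mathfrak t\) number \(\norm(\mathfrak f)/\norm(\mathfrak t)\), which is the size of the kernel of \(\mathcal O_K/\mathfrak f\to\mathcal O_K/\mathfrak t\). This yields the first inequality.

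The one delicate point is the reduction from the fractional congruence \(\gamma\equiv1\pmod{\mathfrak t}\) to an honest congruence of residues. Since \(\alpha,\alpha_0\) are integral and coprime to \(\mathfrak f\), the element \(\gamma=u^{-1}\alpha\alpha_0^{-1}\) is \(\mathfrak t\)-integral. The congruence \(\gamma\equiv1\) then holds in the localization at primes dividing \(\mathfrak t\), and multiplying by \(u\alpha_0\) gives \(\alpha\equiv u\alpha_0\) in \(\mathcal O_K/\mathfrak t\).

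For the second inequality, Lemma~\ref{lem:ray-class-exact-sequence} gives \(h_K(\mathfrak t)\le h_K\phi(\mathfrak t)\le h_K\norm(\mathfrak t)\). Thus each term is at most \(h_K\norm(\mathfrak f)\), and summing over the \(d(\mathfrak f)\) divisors gives \(\ll_K d(\mathfrak f)\norm(\mathfrak f)\). I expect no real obstacle beyond the bookkeeping of generators and units in the counting step.
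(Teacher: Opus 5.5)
Your proof is correct and follows the paper's argument. You apply the triangle inequality to Lemma~\ref{lem:gauss-orthogonality}, show that the admissible \(\alpha\) fall into at most \(|\mathcal O_K^\times|\) residue classes modulo \(\mathfrak t\), each lifting to \(\norm(\mathfrak f)/\norm(\mathfrak t)\) classes modulo \(\mathfrak f\), and finish with \(h_K(\mathfrak t)\le h_K\norm(\mathfrak t)\). The only cosmetic difference is that you normalize by a base point \(\alpha_0\) of the set, whereas the paper writes \(\mathfrak m\mathfrak e=(\xi)\) and obtains the classes \(\alpha\equiv\xi^{-1}u\pmod{\mathfrak t}\) directly.
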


\begin{proof}
By Lemma~\ref{lem:gauss-orthogonality}, the triangle inequality, and
\(|\mu(\mathfrak s)|\le1\), the left-hand side is bounded by
\[
\sum_{\mathfrak t\mid\mathfrak f}
h_K(\mathfrak t)
\#
\left\{
\alpha\bmod\mathfrak f:
(\alpha,\mathfrak f)=(1),\
\mathfrak m\mathfrak e(\alpha)\in P_K(\mathfrak t)
\right\}.
\]

If \(\mathfrak m\mathfrak e\) is not principal, all the sets in
the preceding display are empty by
Lemma~\ref{lem:gauss-orthogonality}.  We may therefore assume that
\(\mathfrak m\mathfrak e=(\xi)\) for some \(\xi\in K^\times\).

Since
\(
(\mathfrak m\mathfrak e,\mathfrak f)=(1),
\)
we have
\(
v_{\mathfrak p}(\xi)=0,
(\mathfrak p\mid\mathfrak f).
\)
Thus \(\xi\) is invertible modulo every
\(\mathfrak t\mid\mathfrak f\). The condition
\(
\mathfrak m\mathfrak e(\alpha)
=
(\xi\alpha)
\in P_K(\mathfrak t)
\)
is equivalent to the existence of a unit
\(u\in\mathcal O_K^\times\) such that
\(\xi\alpha\equiv u\pmod{\mathfrak t}\).
Indeed, if \((\xi\alpha)\in P_K(\mathfrak t)\), then there exists
\(\lambda\equiv1\pmod{\mathfrak t}\) satisfying
\(
(\lambda)=(\xi\alpha).
\)
Hence
\(
\lambda=u\xi\alpha
\)
for some \(u\in\mathcal O_K^\times\), and therefore
\(
\xi\alpha\equiv u^{-1}\pmod{\mathfrak t}.
\)
Since inverses of units are again units, this is equivalent to
the preceding congruence.  The converse follows in the same way.

For each fixed unit \(u\), since \(\xi\) is invertible modulo
\(\mathfrak t\), this congruence determines the single residue class
\(\alpha\equiv\xi^{-1}u\pmod{\mathfrak t}\).
The natural reduction map
\(\mathcal O_K/\mathfrak f\longrightarrow\mathcal O_K/\mathfrak t\)
has fibers of cardinality
\(
\frac{\norm(\mathfrak f)}{\norm(\mathfrak t)}.
\)
Restricting to residue classes coprime to \(\mathfrak f\) can only
decrease the number of lifts.  Hence
\[
\#
\left\{
\alpha\bmod\mathfrak f:
(\alpha,\mathfrak f)=(1),\
\mathfrak m\mathfrak e(\alpha)\in P_K(\mathfrak t)
\right\}
\le
|\mathcal{O}_{K}^{\times}|
\frac{\norm(\mathfrak f)}{\norm(\mathfrak t)}.
\]
Substituting this into the earlier absolute-value bound gives
\[
\left|
\sum_{\chi\bmod\mathfrak f}^{*}
\chi(\mathfrak m)\tau(\chi)
\right|
\le
|\mathcal O_K^\times|
\sum_{\mathfrak t\mid\mathfrak f}
h_K(\mathfrak t)
\frac{\norm(\mathfrak f)}{\norm(\mathfrak t)}.
\]

Finally, the cardinality formula in
Lemma~\ref{lem:ray-class-exact-sequence} gives
\[
h_K(\mathfrak t)
=
h_K
\frac{\phi(\mathfrak t)}
{[\mathcal O_K^\times:
  \mathcal O_K^\times\cap K^{\mathfrak t,1}]}
\le
h_K\phi(\mathfrak t)
\le
h_K\norm(\mathfrak t).
\]
Therefore
\[
\begin{aligned}
|\mathcal O_K^\times|
\sum_{\mathfrak t\mid\mathfrak f}
h_K(\mathfrak t)
\frac{\norm(\mathfrak f)}{\norm(\mathfrak t)}
&\le
|\mathcal O_K^\times|h_K\norm(\mathfrak f)
\sum_{\mathfrak t\mid\mathfrak f}1 \\
&=
|\mathcal O_K^\times|h_Kd(\mathfrak f)\norm(\mathfrak f) \\
&\ll_K
d(\mathfrak f)\norm(\mathfrak f).
\end{aligned}
\]
This proves the lemma.
\end{proof}

\begin{numberedcorollary}\label{cor:gauss-sum-bound-fractional}
For every fractional ideal
\(\mathfrak b\in I_K(\mathfrak f)\),
\[
\left|
\sum_{\chi\bmod\mathfrak f}^{*}
\chi(\mathfrak b)\tau(\chi)
\right|
\le
|\mathcal O_K^\times|
\sum_{\mathfrak t\mid\mathfrak f}
h_K(\mathfrak t)
\frac{\norm(\mathfrak f)}{\norm(\mathfrak t)}
\ll_K
d(\mathfrak f)\norm(\mathfrak f).
\]
\end{numberedcorollary}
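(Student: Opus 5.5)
The plan is to reduce Corollary~\ref{cor:gauss-sum-bound-fractional} to Lemma~\ref{lem:gauss-sum-bound} by replacing the fractional ideal \(\mathfrak b\) with an integral ideal in the same ray class modulo \(\mathfrak f\). This swap changes neither the values \(\chi(\mathfrak b)\) of characters modulo \(\mathfrak f\) nor the Gauss sums \(\tau(\chi)\).

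First, write \(\mathfrak b=\mathfrak b_1\mathfrak b_2^{-1}\) with \(\mathfrak b_1,\mathfrak b_2\) integral and coprime to \(\mathfrak f\). This is possible since \(\mathfrak b\in I_K(\mathfrak f)\) has no valuation at primes dividing \(\mathfrak f\). Next, choose \(\gamma\in\mathcal O_K\) with \(\gamma\in\mathfrak b_2\) and \(\gamma\equiv1\pmod{\mathfrak f}\). Because \(\mathfrak b_2\) and \(\mathfrak f\) are coprime, \(\mathfrak b_2+\mathfrak f=\mathcal O_K\), and the Chinese remainder theorem produces such a \(\gamma\). This is the same device used in the proof of Lemma~\ref{lem:gauss-orthogonality}.

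Then \(\mathfrak m:=(\gamma)\mathfrak b=(\gamma)\mathfrak b_2^{-1}\mathfrak b_1\) is an integral ideal, since \((\gamma)\subseteq\mathfrak b_2\). It is coprime to \(\mathfrak f\), because \(\gamma\equiv1\pmod{\mathfrak f}\) forces \((\gamma,\mathfrak f)=1\) and \(\mathfrak b\) is already coprime to \(\mathfrak f\). Since \((\gamma)\in P_K(\mathfrak f)\), every character \(\chi\) modulo \(\mathfrak f\) satisfies \(\chi(\mathfrak m)=\chi(\mathfrak b)\). Hence
\[
\sideset{}{^*}\sum_{\chi\bmod\mathfrak f}\chi(\mathfrak b)\tau(\chi)=\sideset{}{^*}\sum_{\chi\bmod\mathfrak f}\chi(\mathfrak m)\tau(\chi),
\]
and Lemma~\ref{lem:gauss-sum-bound} applied to \(\mathfrak m\) gives both inequalities verbatim.

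There is no real obstacle here. The only point needing care is that \(\tau(\chi)\) is defined with \(\mathfrak e\) and \(\beta\) fixed independently of \(\chi\), so the Gauss sums are unaffected by the replacement \(\mathfrak b\leadsto\mathfrak m\). The right-hand side of Lemma~\ref{lem:gauss-sum-bound} does not depend on \(\mathfrak m\) at all, so the bound transfers unchanged.
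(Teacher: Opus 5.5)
Your proposal is correct and is essentially the paper's proof. The paper chooses an integral ideal \(\mathfrak d\) coprime to \(\mathfrak f\) with \(\mathfrak d\mathfrak b\) integral (your \(\mathfrak b_2\) is one such choice), picks \(\gamma\in\mathfrak d\) with \(\gamma\equiv1\pmod{\mathfrak f}\) by the Chinese remainder theorem, and applies Lemma~\ref{lem:gauss-sum-bound} to \((\gamma)\mathfrak b\); the only trivial point to add is that \(\gamma\) should be taken nonzero, which is always possible because the admissible \(\gamma\) form a full coset of \(\mathfrak b_2\mathfrak f\).
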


\begin{proof}
Choose an integral ideal \(\mathfrak d\), coprime to
\(\mathfrak f\), such that \(\mathfrak d\mathfrak b\) is integral.
By the Chinese remainder theorem, choose
\(\gamma\in\mathfrak d\) with
\(\gamma\equiv1\pmod{\mathfrak f}\). Then
\(\mathfrak n:=(\gamma)\mathfrak b\) is an integral ideal coprime to
\(\mathfrak f\). Since \((\gamma)\in P_K(\mathfrak f)\), one has
\(\chi(\mathfrak n)=\chi(\mathfrak b)\)
for every character modulo \(\mathfrak f\). The result now follows by
applying Lemma~\ref{lem:gauss-sum-bound} to \(\mathfrak n\).
\end{proof}

\subsection{Twisted first moment}
For a nonzero integral ideal \(\mathfrak m\) coprime to \(\mathfrak f\), the twisted first moment is
\[
	\mathcal{A}(\m) \coloneqq \sideset{}{^*}\ssum_{\chi \bmod \f} \chi(\m)\, L\bigl(\tfrac{1}{2},\chi\bigr).
\]
The approximate functional equation, Lemma~\ref{approximate functional equation}, gives
\[
	\mathcal{A}(\m) = \sideset{}{^*}\ssum_{\chi \text{ mod } \mathfrak{f}} \chi(\m)\!\biggl[ \ssum_{(\a,\f) = 1} \frac{\chi(\a)}{\norm(\a)^{1/2}}\, V\!\left(\frac{\norm(\a)}{\norm(\f)^{1/2}}\right) + \frac{\tau(\chi)}{\norm(\f)^{1/2}} \ssum_{(\a,\f) = 1} \frac{\overline{\chi(\a)}}{\norm(\a)^{1/2}}\, V\!\left(\frac{\norm(\a)}{\norm(\f)^{1/2}}\right) \biggr].
\]
Interchanging sums and summing over characters with
Lemma~\ref{lem:primitive-ray-orthogonality} yields
\(\mathcal{A}(\m)=\mathcal{A}_1(\m)+\mathcal{A}_2(\m)\) where
\begin{align*}
	\mathcal{A}_1(\m) &\coloneqq \ssum_{(\a,\f) = 1} \frac{1}{\norm(\a)^{1/2}}\, V\!\left(\frac{\norm(\a)}{\norm(\f)^{1/2}}\right) \ssum_{\substack{\a\m \in P_K(\t) \\ \s\t = \f}} \mu(\s)\, h_K(\t), \\
	\mathcal{A}_2(\m) &\coloneqq \frac{1}{\norm(\f)^{1/2}} \ssum_{(\a,\f) = 1} \frac{1}{\norm(\a)^{1/2}}\, V\!\left(\frac{\norm(\a)}{\norm(\f)^{1/2}}\right) \sideset{}{^*}\ssum_{\chi \text{ mod } \mathfrak{f}} \tau(\chi)\, \chi(\a^{-1}\m).
\end{align*}

\subsection{\texorpdfstring{Main term of \(\mathcal A_1\)}{Main term of A1}}
\label{sec:first-moment-main-term}

The diagonal contribution to \(\mathcal A_1(\mathfrak m)\) comes from \(\mathfrak a\mathfrak m=\mathcal O_K\), which for integral \(\mathfrak a,\mathfrak m\) means \(\mathfrak a=\mathfrak m=\mathcal O_K\). By Lemma~\ref{lem:primitive-ray-orthogonality} and \eqref{eq:approximation to indicator}, this term is
\[
	\mathcal{A}_{1,\mathrm{main}}(\mathfrak m) = \delta_{\mathfrak m}h_K^*(\mathfrak f) V \biggl(\frac{1}{\norm(\mathfrak{f})^{1/2}}\biggr) = \delta_{\mathfrak m} h_K^*(\mathfrak{f}) (1 + \mathsf{O}_A(\norm(\mathfrak{f})^{-A/2}))
\]
where \( \delta_\mathfrak{m} = 1 \) if \( \mathfrak{m} = \mathcal{O}_K \) and \( \delta_\mathfrak{m} = 0 \) otherwise.

\subsection{\texorpdfstring{Error term of \(\mathcal A_1\)}{Error term of A1}}

The remaining terms contribute
\[
	\mathcal{A}_1(\mathfrak{m}) - \mathcal{A}_{1,\text{main}}(\mathfrak{m}) = \sum_{(\a,\f) = 1} \frac{1}{\norm(\a)^{1/2}}\, V\!\left(\frac{\norm(\a)}{\norm(\f)^{1/2}}\right) \ssum_{\substack{\mathcal{O}_K \neq \a\m \in P_K(\t) \\ \s\t = \f}} \mu(\s)\, h_K(\t).
\]
The inner sum is supported only on ideals \( \mathfrak{a} \) that make \( \mathfrak{am} \) principal. We re-index by replacing \( \mathfrak{a} \) with \( \mathfrak{J} = \mathfrak{am} \in P_K(\mathfrak{t}) \). Bounding \( \mu(\mathfrak{s}) \) trivially and using \( h_K(\mathfrak{t}) \le h_K \phi(\mathfrak{t}) \) (Lemma~\ref{lem:ray-class-exact-sequence}) gives
\[
	\mathcal{A}_1(\mathfrak{m}) - \mathcal{A}_{1,\text{main}}(\mathfrak{m}) \ll_K \norm(\mathfrak{m})^{1/2} \sum_{\mathfrak{t} \mid \mathfrak{f}} \phi(\mathfrak{t}) \ssum_{\mathcal{O}_K \neq \mathfrak{J} \in P_K(\mathfrak{t}) \\ \mathfrak{J} \subseteq \mathfrak{m}} \frac{1}{\norm(\mathfrak{J})^{1/2}} \biggl| V \biggl(\frac{\norm(\mathfrak{J})}{\norm(\mathfrak{m}) \norm(\mathfrak{f})^{1/2}}\biggr)\biggr|,
\]
so by Lemma~\ref{lem:ray-principal-ideal-count} with \( X = \norm(\mathfrak{m}) \norm(\mathfrak{f})^{1/2} \),
\begin{equation} \label{A1 error}
	\mathcal{A}_1(\mathfrak{m}) - \mathcal{A}_{1,\text{main}}(\mathfrak{m}) \ll \norm(\mathfrak{m}) \norm(\mathfrak{f})^{1/4+\varepsilon}.
\end{equation}

\subsection{\texorpdfstring{Estimation of \(\mathcal A_2\)}{Estimation of A2}}

We apply Corollary~\ref{cor:gauss-sum-bound-fractional} to the fractional ideal \(\mathfrak a^{-1}\mathfrak m\), which is coprime to \(\mathfrak f\), to get
\[
	|\mathcal{A}_2(\m)| \ll_K d(\f)\, \norm(\f)^{1/2} \, \ssum_{(\a,\f) = (1)} \frac{1}{\sqrt{\norm(\a)}}\, \left|V\!\left(\frac{\norm(\a)}{\sqrt{\norm(\f)}}\right)\right|.
\]
Split the sum over $\a$ at $\norm(\a) = \norm(\mathfrak{f})^{1/2}$. Write \(I_1\) for the sum over \( \norm{\mathfrak{a}} \le \norm(\mathfrak{f})^{1/2}\), and \(I_2\) for \( \norm{\mathfrak{a}} > \norm(\mathfrak{f})^{1/2}\). For \( I_1 \), we bound \( V \) trivially. Then partial summation and the ideal counting estimate in Lemma~\ref{lem:ideal-density-mertens} yield
\[
	I_1 \ll \norm(\mathfrak{f})^{1/2} \times \norm(\f)^{-1/4} + \int_1^{\norm(\mathfrak{f})^{1/2}} t \times t^{-3/2}\, dt
	\ll_K \norm(\f)^{1/4}.
\]
For the tail, choose \(A=1\) in \eqref{eq:approximation to indicator} and apply partial summation again, giving
\[
I_2
\ll_K
\norm(\mathfrak f)^{1/2}
\sum_{\norm(\mathfrak a)>\sqrt{\norm(\mathfrak f)}}
\norm(\mathfrak a)^{-3/2}
\ll_K \norm(\mathfrak f)^{1/4}.
\]
We conclude that
\begin{equation} \label{A2 bound}
	|\mathcal{A}_2(\m)| \ll_K d(\f)\, \norm(\f)^{3/4}
	\ll \norm(\f)^{3/4+\varepsilon}.
\end{equation}

\section{Mollified first moment}\label{sec:mollified-first-moment}
In this section we prove Proposition~\ref{prop:first mollified moment}. The mollified first moment expands as
\[
\mathcal C(M)
\coloneqq
\sideset{}{^*}\sum_{\chi\bmod\mathfrak f}
\mathcal M(\chi)L\!\left(\frac12,\chi\right)
=
\sum_{\substack{\norm(\mathfrak m)\le M\\
(\mathfrak m,\mathfrak f)=(1)}}
\frac{\widetilde\mu(\mathfrak m)}{\norm(\mathfrak{m})^{1/2}}
\mathcal A(\mathfrak m),
\]
where \(\mathcal A(\mathfrak m)=\mathcal A_1(\mathfrak m)+\mathcal A_2(\mathfrak m)\) is the twisted first moment computed in \Cref{sec:first-moment}. Taking \(\mathfrak{m} = \mathcal{O}_K\), we see that the main term of \( \mathcal{C}(M) \) is \( \mathcal C_{\mathrm{main}}(M)
=
h_K^*(\mathfrak f)
\left(1+\littleo(1)\right) \).
The remaining contribution is
\(\mathcal C(M)-\mathcal C_{\mathrm{main}}(M)
=\mathcal E_1(M)+\mathcal E_2(M)\) where
\[
\begin{aligned}
\mathcal E_1(M)
&:=
\sum_{\substack{\norm(\mathfrak m)\le M\\
(\mathfrak m,\mathfrak f)=(1)}}
\widetilde\mu(\mathfrak m)\norm(\mathfrak m)^{-1/2}
\bigl(
\mathcal A_1(\mathfrak m)
-
\mathcal A_{1,\mathrm{main}}(\mathfrak m)
\bigr),\\
\mathcal E_2(M)
&:=
\sum_{\substack{\norm(\mathfrak m)\le M\\
(\mathfrak m,\mathfrak f)=(1)}}
\widetilde\mu(\mathfrak m)\norm(\mathfrak m)^{-1/2}
\mathcal A_2(\mathfrak m).
\end{aligned}
\]
With \eqref{A1 error} and \eqref{A2 bound} we obtain
\[
|\mathcal E_1(M)|
\ll_{K,\varepsilon}
\norm(\mathfrak f)^{1/4+\varepsilon}
\sum_{\norm(\mathfrak m)\le M}
\norm(\mathfrak m)^{1/2}
\ll
M^{3/2}\norm(\mathfrak f)^{1/4+\varepsilon}
\]
and
\[
|\mathcal E_2(M)|
\ll_{K,\varepsilon}
\norm(\mathfrak f)^{3/4+\varepsilon}
\sum_{\norm(\mathfrak m)\le M}
\norm(\mathfrak m)^{-1/2}
\ll
M^{1/2}\norm(\mathfrak f)^{3/4+\varepsilon}.
\]
Hence we have
\begin{equation}
\label{C asymptotic first moment}
\mathcal C(M)
=
h_K^*(\mathfrak f)(1+\mathsf{O}_A(\norm(\mathfrak{f})^{-A/2}))
+
\bigO \!\left(
M^{3/2}\norm(\mathfrak f)^{1/4+\varepsilon}
+
M^{1/2}\norm(\mathfrak f)^{3/4+\varepsilon}
\right),
\end{equation}
which is Proposition~\ref{prop:first mollified moment}.

\section{Twisted second moment}\label{sec:second-moment}
For nonzero integral ideals \(\mathfrak m_1,\mathfrak m_2\) coprime to \(\mathfrak f\), define the twisted second moment by
\[
\mathcal{B}(\m_1,\m_2)=\sum^{*}_{\chi \mod \f}\chi(\m_1)\overline{\chi}(\m_2)|L(\tfrac{1}{2},\chi)|^2.
\]
By Lemmas~\ref{approximate functional equation II} and
\ref{lem:primitive-ray-orthogonality}, we get
\begin{equation}\label{second moments}
\begin{aligned}
\mathcal{B}(\m_1,\m_2)
={}&2\sum_{n}W\left(\frac{4n\pi^2}{|d_{K}|\norm(\f)}\right)n^{-1/2}
\sum_{\substack{\norm(\a_1\a_2)=n\\
(\a_1\a_2\m_1\m_2,\f)=1}}
\sum_{\substack{\mathfrak{a}_1 \mathfrak{m}_1
\mathfrak{a}_2^{-1} \mathfrak{m}_2^{-1} \in P_{K}(\t)\\
\s\t=\f}}
\mu(\s)h_{K}(\t).
\end{aligned}
\end{equation}

\subsection{Diagonal contribution}\label{sec:twisted-diagonal}

This calculation follows the proof of \cite[Lemma~4.1]{Iwaniec-Sarnak}.
The diagonal terms in \eqref{second moments} are those satisfying \(\mathfrak a_1\mathfrak m_1=\mathfrak a_2\mathfrak m_2\). Write \(\mathcal B_{\mathrm{main}}(\mathfrak m_1,\mathfrak m_2)\) for their contribution. For every integral ideal \(\mathfrak l\) coprime to \(\mathfrak f\), the identity \(\chi(\mathfrak l)\overline{\chi(\mathfrak l)}=1\) gives \(\mathcal B(\mathfrak l\mathfrak n_1,\mathfrak l\mathfrak n_2)=\mathcal B(\mathfrak n_1,\mathfrak n_2)\), and the same identity holds for the diagonal contribution. Taking \(\mathfrak l=(\mathfrak m_1,\mathfrak m_2)\), it therefore suffices to consider coprime \(\mathfrak n_1,\mathfrak n_2\). The diagonal condition then gives \(\mathfrak a_1=\mathfrak b\mathfrak n_2\) and \(\mathfrak a_2=\mathfrak b\mathfrak n_1\) for a unique integral ideal \(\mathfrak b\), so
\begin{equation} \label{eq:S formula for B main}
\begin{aligned}
\mathcal B_{\mathrm{main}}(\mathfrak n_1,\mathfrak n_2)
&=2h_K^*(\mathfrak f)
\sum_{n=1}^{\infty}
W\!\left(\frac{4\pi^2n}{|d_K|\norm(\mathfrak f)}\right)n^{-1/2}
\ssum_{\substack{\mathfrak b:\,\norm(\mathfrak b^2\mathfrak n_1 \mathfrak n_2 )=n\\
(\mathfrak b,\mathfrak f)=1}}1\\
&=\frac{2h_K^*(\mathfrak f)}
{\norm(\mathfrak n_1\mathfrak n_2)^{1/2}}
S\!\left(\frac{|d_K|\norm(\mathfrak f)}{
4\pi^2\norm(\mathfrak n_1 \mathfrak n_2 )}\right),
\end{aligned}
\end{equation}
where
\[
S(X)=\sum_{n=1}^{\infty}
\frac{c_\mathfrak f(n)}{n}W\!\left(\frac{n^2}{X}\right), \qquad c_\mathfrak{f}(n) = \# \{\mathfrak{b} \subseteq \mathcal{O}_K : (\mathfrak{b}, \mathfrak{f}) = 1, \norm(\mathfrak{b}) = n\}.
\]
Note that the Dirichlet series associated to \(c_\mathfrak{f}(n)\) is simply \(
	\sum_{n=1}^\infty \frac{c_\mathfrak{f}(n)}{n^s} = \zeta_{K,\mathfrak{f}}(s)
\).
By Mellin inversion,
\[
	S(X) = \frac{1}{2 \pi \mathsf{i}} \int_{(3)} \Gamma^2(s + \tfrac{1}{2}) G^2(s) \zeta_{K,\mathfrak{f}}(1+2s) X^s \frac{\mathrm{d}s}{s}.
\]
    For the following computations we choose the weight function \( G \) defining \(W\) in Lemma~\ref{approximate functional equation II} to be normalized by
\(
G(0)\Gamma\!\left(\frac12\right)=1
\) and  \(G(-\frac{1}{2}) = G'(0) = 0 \). Then the integrand is meromorphic in
	\(\Re s>-1\) with a double pole at \(s=0\) and no other singularities. Shifting the contour to
	\(\Re s=-\frac12\) gives
\begin{equation} \label{eq:S shifted formula}
    S(X) = \operatorname{res}(s=0) + \bigO_K\!\left(d(\mathfrak{f}) X^{-1/2}\right)
\end{equation}
	since
	\(
	\left|
	\prod_{\mathfrak p\mid\mathfrak f}
	\left(1-\norm(\mathfrak p)^{-(1+2s)}\right)
	\right|
	\le
	\prod_{\mathfrak p\mid\mathfrak f}2
	\le d(\mathfrak f)
	\)
    on the new contour. The Laurent expansions at \(s=0\) are given by
\[
    \begin{aligned}
	s^{-1} \Gamma^2(s + \tfrac{1}{2}) G^2(s) X^s
        &= s^{-1} + \log X + 2 \frac{\Gamma'}{\Gamma}\biggl(\frac{1}{2}\biggr) + \mathsf{O}(s), \\
    \zeta_{K,\mathfrak{f}}(1+2s)
        &= \frac{\phi(\mathfrak{f})}{\norm(\mathfrak{f})} \biggl(\frac{\gamma_{-1}(K)}{2s} + \gamma_0(K) + \gamma_{-1}(K) \eta(\mathfrak{f}) + \mathsf{O}(s)\biggr).
    \end{aligned}
\]
Thus the main term of \(S(X)\) is
\[
	\operatorname{res}(s=0) = \frac{\phi(\mathfrak{f})}{\norm(\mathfrak{f})}  \biggl( \frac{\gamma_{-1}(K)}{2} \log X + \gamma_{-1}(K) \frac{\Gamma'}{\Gamma}\biggl(\frac{1}{2}\biggr) + \gamma_0(K) + \gamma_{-1}(K)\eta(\mathfrak{f})  \biggr).
\]
For \((\mathfrak n_1,\mathfrak n_2)=1\), substituting \eqref{eq:S shifted formula} into \eqref{eq:S formula for B main} gives
\begin{equation}\label{eq:complete-twisted-diagonal}
\mathcal B_{\mathrm{main}}(\mathfrak n_1,\mathfrak n_2) =
\frac{\gamma_{-1}(K)h_K^*(\mathfrak f)\phi(\mathfrak f)}
{\norm(\mathfrak f)\norm(\mathfrak n_1\mathfrak n_2)^{1/2}}
\mathcal L_0(\mathfrak n_1,\mathfrak n_2)
+ \mathsf{O}_K(h_K^*(\mathfrak f)d(\mathfrak f)
\norm(\mathfrak f)^{-1/2}),
\end{equation}
where
\[
\mathcal L_0(\mathfrak  n_1,\mathfrak  n_2)
=\log\frac{|d_K|\norm(\mathfrak f)}
{4\pi^2\norm(\mathfrak n_1\mathfrak n_2)}
+2 \frac{\Gamma'}{\Gamma}\biggl(\frac{1}{2}\biggr)
+2\frac{\gamma_0(K)}{\gamma_{-1}(K)}
+2\eta(\mathfrak f).
\]
\subsection{Averaged off-diagonal bound}\label{sec:twisted-off-diagonal}

The off-diagonal contribution of \eqref{second moments} comes from the terms with \( \mathfrak{a}_1 \mathfrak{m}_1 \neq \mathfrak{a}_2 \mathfrak{m}_2 \). The contribution is \(\ll \beta(\mathfrak{m}_1, \mathfrak{m}_2) \)
where
\[
	\beta(\mathfrak{m}_1, \mathfrak{m}_2) \coloneqq \sum_{n=1}^\infty\left|W\left(\frac{4n\pi^2}{|d_{K}|\norm(\f)}\right)\right|n^{\frac{-1}{2}}\sum_{\norm(\a_1\a_2)=n}\ssum_{1 \neq \mathfrak{a}_1 \mathfrak{m}_1 (\mathfrak{a}_2 \mathfrak{m}_2)^{-1} \in P_{K}(\t) \\ \t \mid \f}h_{K}(\t)
\]
Following Iwaniec and Sarnak \cite{Iwaniec-Sarnak}, we bound the average of \(\beta(\mathfrak m_1,\mathfrak m_2)\) over \(\norm(\mathfrak m_1),\norm(\mathfrak m_2)\le M\), with weight \(\norm(\mathfrak m_1\mathfrak m_2)^{-1/2}\). This suffices for the mollified second moment in \Cref{sec:mollified-second-moment}. Pulling out the sum over \(\mathfrak t\) and combining the other sums by setting \(\mathfrak J_i=\mathfrak a_i\mathfrak m_i\), for \(i=1,2\), gives
\begin{multline*}
\sum_{\norm(\mathfrak m_1),\norm(\mathfrak m_2)\le M}
\norm(\mathfrak m_1\mathfrak m_2)^{-1/2}
\beta(\mathfrak m_1,\mathfrak m_2)\\
\le
\sum_{\mathfrak t\mid\mathfrak f}h_K(\mathfrak t)
\ssum_{1\ne\mathfrak J_1\mathfrak J_2^{-1}\in P_K(\mathfrak t)}
\norm(\mathfrak J_1\mathfrak J_2)^{-1/2}
\ssum_{\norm(\mathfrak m_1),\norm(\mathfrak m_2)\le M\\
\mathfrak m_i\mid\mathfrak J_i}
	\left|W\!\left(
	\frac{4\pi^2}{|d_K|}
	\frac{\norm(\mathfrak J_1\mathfrak J_2)}
	{\norm(\mathfrak m_1\mathfrak m_2\mathfrak f)}
	\right)\right|.
\end{multline*}
The number of decompositions \(\mathfrak J_i=\mathfrak a_i\mathfrak m_i\), with \(\norm(\mathfrak m_i)\le M\), is at most \(d(\mathfrak J_i)\ll_{K,\varepsilon}\norm(\mathfrak J_i)^\varepsilon\). Moreover, the rapid decay of \(W\) and the bound \(\norm(\mathfrak m_1\mathfrak m_2)\le M^2\) give
\[
\left|W\!\left(\frac{4\pi^2}{|d_K|}
\frac{\norm(\mathfrak J_1\mathfrak J_2)}
{\norm(\mathfrak m_1\mathfrak m_2\mathfrak f)}\right)\right|
\ll_K \left(1+\frac{\norm(\mathfrak J_1\mathfrak J_2)}{M^2\norm(\mathfrak f)}\right)^{-5}.
\]
Consequently,
\begin{multline*}
\sum_{\norm(\mathfrak m_1),\norm(\mathfrak m_2)\le M}
\norm(\mathfrak m_1\mathfrak m_2)^{-1/2}\beta(\mathfrak m_1,\mathfrak m_2)\\
\ll_{K,\varepsilon}\sum_{\mathfrak t\mid\mathfrak f}h_K(\mathfrak t)
\ssum_{1\ne\mathfrak J_1\mathfrak J_2^{-1}\in P_K(\mathfrak t)}
\norm(\mathfrak J_1\mathfrak J_2)^{-1/2+\varepsilon}
\left(1+\frac{\norm(\mathfrak J_1\mathfrak J_2)}{M^2\norm(\mathfrak f)}\right)^{-5}.
\end{multline*}
Apply Lemma~\ref{lem:ray-ideal-pair-lattice} with \(X=M\norm(\mathfrak f)^{1/2}\). The inner sum is
\(\ll_{K,\varepsilon}\norm(\mathfrak t)^{-1}(M\norm(\mathfrak f)^{1/2})^{1+2\varepsilon}\log(2M\norm(\mathfrak f)^{1/2})\).
Since \(h_K(\mathfrak t)/\norm(\mathfrak t)\ll_K1\), we may absorb the logarithm and \(d(\mathfrak f)\) by increasing and then renaming \(\varepsilon\), obtaining
\begin{equation} \label{eq:twisted second moment off diagonal averaged bound}
	\sum_{\norm(\mathfrak m_1),\norm(\mathfrak m_2)\le M}  \norm (\mathfrak{m}_1 \mathfrak{m}_2)^{-1/2} \beta(\mathfrak{m}_1, \mathfrak{m}_2)  \ll M^{1+\varepsilon} \norm(\mathfrak{f})^{1/2+\varepsilon}.
\end{equation}

\section{Mollified second moment}\label{sec:mollified-second-moment}
In this section we prove Proposition~\ref{prop:second mollified moment}. The expansion of the second mollified moment is
\[
    \begin{aligned}
    \mathcal D(M)
    \coloneqq \sideset{}{^*}\sum_{\chi\bmod\mathfrak f}
    \left|\mathcal M(\chi)L\!\left(\frac12,\chi\right)\right|^2
    = \sum_{\substack{\norm(\mathfrak m_1),\norm(\mathfrak m_2)\le M \\
    (\mathfrak m_1\mathfrak m_2,\mathfrak f)=1}}
    \frac{\widetilde\mu(\mathfrak m_1)\widetilde\mu(\mathfrak m_2)}{\norm(\mathfrak m_1\mathfrak m_2)^{1/2}}
    \mathcal B(\mathfrak m_1,\mathfrak m_2).
    \end{aligned}
\]
The error term in \eqref{eq:complete-twisted-diagonal} averaged over the mollifier is
\begin{equation*}
\begin{aligned}
\ll h_K^*(\mathfrak f)d(\mathfrak f)
\norm(\mathfrak f)^{-1/2}
\sum_{\substack{\norm(\mathfrak m_1)\le M,\ \norm(\mathfrak m_2)\le M\\
(\mathfrak m_1\mathfrak m_2,\mathfrak f)=1}}
\frac{1}{\norm(\mathfrak m_1\mathfrak m_2)^{1/2}}
&\le
h_K^*(\mathfrak f)d(\mathfrak f)
\norm(\mathfrak f)^{-1/2}
\left(
\sum_{\norm(\mathfrak m)\le M}
\frac{1}{\norm(\mathfrak m)^{1/2}}
\right)^2\\
&\ll
M\norm(\mathfrak f)^{1/2+\varepsilon}.
\end{aligned}
\end{equation*}
The summation in parentheses is evaluated by partial summation on \eqref{eq:ideal-density} of Lemma~\ref{lem:ideal-density-mertens}.
This error may be absorbed into the error term of \eqref{eq:twisted second moment off diagonal averaged bound}. We thus obtain
\begin{equation}\label{eq:mollified-second-moment-decomposition}
\mathcal D(M)
=\mathcal D_{\mathrm{diag}}(M) + \mathsf{O}(M^{1+\varepsilon} \norm(\mathfrak{f})^{1/2+\varepsilon})
\end{equation}
where \(\mathcal D_{\mathrm{diag}}(M)\) is the residue term in \eqref{eq:complete-twisted-diagonal}, summed against the mollifier coefficients. To write it explicitly, we set \(\mathfrak l=(\mathfrak m_1,\mathfrak m_2)\) and \(\mathfrak m_i=\mathfrak l\mathfrak n_i\) and expand the coprimality condition using \(1_{(\mathfrak n_1,\mathfrak n_2)=1}=\sum_{\mathfrak d\mid(\mathfrak n_1,\mathfrak n_2)}\mu(\mathfrak d)\). After replacing each \(\mathfrak n_i\) by \(\mathfrak d\mathfrak n_i\), relabel \(\mathfrak l\mathfrak d\) as \(\mathfrak l\). Then \(\mathfrak d\mid\mathfrak l\), and the norm factor becomes \(1/(\norm(\mathfrak l)\norm(\mathfrak d)\norm(\mathfrak n_1\mathfrak n_2))\). The factor \(\mu^2(\mathfrak l)\) may be inserted because the mollifier coefficients vanish unless \(\mathfrak l\) is squarefree. Thus
\[
\mathcal D_{\mathrm{diag}}(M)
=
\frac{\gamma_{-1}(K)h_K^*(\f)\phi(\f)}{\norm(\f)}
\sum_{\substack{\norm(\mathfrak l)\le M\\(\mathfrak l,\f)=1}}
\frac{\mu^2(\mathfrak l)}{\norm(\mathfrak l)}
\sum_{\mathfrak d\mid\mathfrak l}
\frac{\mu(\mathfrak d)}{\norm(\mathfrak d)}
\sum_{\substack{\mathfrak n_1,\mathfrak n_2\\
(\mathfrak n_1\mathfrak n_2,\mathfrak f)=1}}
\frac{\widetilde{\mu}(\mathfrak l\mathfrak n_1)
\widetilde{\mu}(\mathfrak l\mathfrak n_2)}
{\norm(\mathfrak n_1)\norm(\mathfrak n_2)}\mathcal{L}_0(\mathfrak{d}\mathfrak{n}_1, \mathfrak{d}\mathfrak{n}_2).
\]
We write
\[
\mathcal L_0(\mathfrak{d}\mathfrak{n}_1, \mathfrak{d}\mathfrak{n}_2) = \log\frac{\norm(\mathfrak f)}{\norm(\mathfrak d\mathfrak n_1)}
+\log\frac{\norm(\mathfrak f)}{\norm(\mathfrak d\mathfrak n_2)}
-\log\norm(\mathfrak f) + R(K,\mathfrak f)
\]
where
\(
R(K,\mathfrak f)=
\log\frac{|d_K|}{4\pi^2}
+2\frac{\Gamma'}{\Gamma}\!\left(\frac12\right)
+2\frac{\gamma_0(K)}{\gamma_{-1}(K)}
+2\eta(\mathfrak f) \) is \(\ll_K\log_2\norm(\mathfrak f)\), by Lemma~\ref{lem:uniform Mertens}. In the notation of Corollary~\ref{lem:T0-T1-evaluation}, the innermost sum is
\begin{equation} \label{eq:innermost sum using Lemma for T0-T1}
	\sum_{\substack{\mathfrak n_1,\mathfrak n_2\\
	(\mathfrak n_1\mathfrak n_2,\mathfrak f)=1}}
	\frac{\widetilde{\mu}(\mathfrak l\mathfrak n_1)
	\widetilde{\mu}(\mathfrak l\mathfrak n_2)}
	{\norm(\mathfrak n_1)\norm(\mathfrak n_2)}\mathcal{L}_0(\mathfrak{d}\mathfrak{n}_1, \mathfrak{d}\mathfrak{n}_2)
	=
		2T_0(\mathfrak l)T_1(\mathfrak l,\mathfrak d)
		+ (-\log \norm(\f)  +R(K, \mathfrak{f})) T_0(\mathfrak l)^2.
\end{equation}
By the same corollary, we have
\[
	\begin{aligned}
		T_0(\mathfrak{l})
		&= T_{0,\mathrm{main}}(\mathfrak{l}) + \mathcal{O}(\mathcal{E}_1(\mathfrak{l})), \\
		T_1(\mathfrak{l},\mathfrak{d})
		&= T_{0,\mathrm{main}}(\mathfrak{l}) (\log (M) + \log \norm(\mathfrak{f}) - \log \norm(\mathfrak{l}) - \log \norm(\mathfrak{d})) + \mathcal{O}(\log (M) \mathcal{E}_1(\mathfrak{l})),
	\end{aligned}
\]
where \( T_{0,\mathrm{main}}(\mathfrak{l}) = \frac{1}{\gamma_{-1}(K)} \frac{\norm(\mathfrak{f})}{\phi(\mathfrak{f})} \frac{\norm(\mathfrak{l})}{\phi(\mathfrak{l})} \frac{\mu(\mathfrak{l})}{\log(M)}  \).
Summing \eqref{eq:innermost sum using Lemma for T0-T1} over \( \mathfrak{l} \) thus yields
\begin{multline}  \label{eq:expansion of D diag}
	\mathcal{D}_{\mathrm{diag}} (M) = \frac{\gamma_{-1}(K) h_K^*(\mathfrak{f}) \phi(\mathfrak{f})}{\norm(\mathfrak{f})} \biggl[(\log \norm(\mathfrak{f}) + R(K,\mathfrak{f})) \mathcal{D}_{\mathrm{diag},1}(M) + 2 \mathcal{D}_{\mathrm{diag},2}(M) - 2 \mathcal{D}_{\mathrm{diag},3}(M) \\
	+ \mathsf{O}(\mathcal{E}_{\mathrm{diag}}(M)) \biggr],
\end{multline}
where
\[
	\begin{aligned}
    \mathcal{D}_{\mathrm{diag},1}(M)
	&= \ssum_{\norm(\mathfrak{l}) \le M \\ (\mathfrak{l}, \mathfrak{f}) = 1} \frac{\mu^2(\mathfrak{l})}{\norm(\mathfrak{l})} T_0(\mathfrak{l})^2 \sum_{\mathfrak{d} \mid \mathfrak{l}} \frac{\mu(\mathfrak{d})}{\norm(\mathfrak{d})}, \\
	\mathcal{D}_{\mathrm{diag},2}(M)
	&= \ssum_{\norm(\mathfrak{l}) \le M \\ (\mathfrak{l}, \mathfrak{f}) = 1} \frac{\mu^2(\mathfrak{l})}{\norm(\mathfrak{l})} T_0(\mathfrak{l})^2 \log \frac{M}{\norm(\mathfrak{l})} \sum_{\mathfrak{d} \mid \mathfrak{l}} \frac{\mu(\mathfrak{d})}{\norm(\mathfrak{d})}, \\
	\mathcal{D}_{\mathrm{diag},3}(M)
	&= \ssum_{\norm(\mathfrak{l}) \le M \\ (\mathfrak{l}, \mathfrak{f}) = 1} \frac{\mu^2(\mathfrak{l})}{\norm(\mathfrak{l})} T_0(\mathfrak{l})^2  \sum_{\mathfrak{d} \mid \mathfrak{l}} \frac{\mu(\mathfrak{d})}{\norm(\mathfrak{d})}\log \norm(\mathfrak{d}),
	\end{aligned}
\]
and
\begin{equation} \label{eq:E diag formula}
	\mathcal{E}_{\mathrm{diag}}(M)
	= \log(M) \ssum_{\norm(\mathfrak{l}) \le M \\ (\mathfrak{l}, \mathfrak{f}) = 1} \frac{\mu^2(\mathfrak{l})}{\norm(\mathfrak{l})} \biggl(\frac{\norm(\mathfrak{f})}{\phi(\mathfrak{f})} \times \frac{\norm(\mathfrak{l})}{\phi(\mathfrak{l}) \log(M)}  + \mathcal{E}_1(\mathfrak{l})\biggr) \mathcal{E}_1(\mathfrak{l}) \sum_{\mathfrak{d} \mid \mathfrak{l}} \frac{1}{\norm(\mathfrak{d})}.
\end{equation}
Here we note that \( \log \norm(\mathfrak{l}) \) and \( \log \norm(\mathfrak{d}) \) are both \( \ll \log(M) \), so the contribution from \( T_0(\mathfrak{l})^2 - T_{0,\mathrm{main}}(\mathfrak{l})^2 \) in each of the \( \mathcal{D}_{\mathrm{diag},i}(M) \) can be absorbed into \eqref{eq:E diag formula}. So \eqref{eq:expansion of D diag} holds when we replace \( T_0(\mathfrak{l})^2 \) with \( T_{0,\mathrm{main}}(\mathfrak{l})^2 \) in each of the \( \mathcal{D}_{\mathrm{diag},i}(M) \). After this replacement we resolve the sums over \( \mathfrak{d} \) through the identities \( \sum_{\mathfrak{d} \mid \mathfrak{l}} \frac{\mu(\mathfrak{d})}{\norm(\mathfrak{d})} = \frac{\phi(\mathfrak{l})}{\norm(\mathfrak{l})} \), \( -\sum_{\mathfrak{d} \mid \mathfrak{l}}\frac{\mu(\mathfrak{d})}{\norm(\mathfrak{d})} \log \norm(\mathfrak{d}) = \frac{\phi(\mathfrak{l})}{\norm(\mathfrak{l})} \eta(\mathfrak{l}) \) that hold by virtue of \( \mathfrak{l} \) being squarefree. In summary, we have the simplified expansion
\begin{multline} \label{eq:D diag simplified expansion}
\mathcal{D}_{\mathrm{diag}}(M) = \frac{h_K^*(\mathfrak{f})\norm(\mathfrak{f})}{\gamma_{-1}(K) \phi(\mathfrak{f}) \log^2(M)} \biggl[ (\log \norm(\mathfrak{f}) + R(K,\mathfrak{f})) \mathcal{D}_{\mathrm{diag},1}'(M) + 2 \mathcal{D}_{\mathrm{diag},2}'(M) - 2 \mathcal{D}_{\mathrm{diag},3}'(M)\biggr] \\ + \mathsf{O} \biggl(\frac{h_K^*(\mathfrak{f}) \phi(\mathfrak{f}) \mathcal{E}_{\mathrm{diag}}(M)}{\norm(\mathfrak{f})}\biggr),
\end{multline}
where
\[
	\begin{gathered}
    \mathcal{D}_{\mathrm{diag},1}'(M)
	=  \ssum_{\norm(\mathfrak{l}) \le M \\ (\mathfrak{l}, \mathfrak{f}) = 1} \frac{\mu^2(\mathfrak{l})}{\norm(\mathfrak{l})} \frac{\norm(\mathfrak{l})}{\phi(\mathfrak{l})}, \qquad
	\mathcal{D}_{\mathrm{diag},2}'(M) = \ssum_{\norm(\mathfrak{l}) \le M \\ (\mathfrak{l}, \mathfrak{f}) = 1} \frac{\mu^2(\mathfrak{l})}{\norm(\mathfrak{l})} \frac{\norm(\mathfrak{l})}{\phi(\mathfrak{l})}\log \frac{M}{\norm(\mathfrak{l})}, \\
	\mathcal{D}_{\mathrm{diag},3}'(M)
	= -\ssum_{\norm(\mathfrak{l}) \le M \\ (\mathfrak{l}, \mathfrak{f}) = 1} \frac{\mu^2(\mathfrak{l})}{\norm(\mathfrak{l})} \frac{\norm(\mathfrak{l})}{\phi(\mathfrak{l})}\eta(\mathfrak{l}).
	\end{gathered}
\]
For \( \mathcal{D}_{\mathrm{diag},3}'(M) \), we expand \( \eta(\mathfrak{l}) = \sum_{\mathfrak{p} \mid \mathfrak{l}}  \frac{\log \norm(\mathfrak{p})}{\norm(\mathfrak{p}) - 1}\) to write it as
\[
	\begin{aligned}
		-\mathcal{D}_{\mathrm{diag},3}'(M)
		&= \ssum_{\norm(\mathfrak{p}) \le M \\ (\mathfrak{p},\mathfrak{f}) = 1} \frac{\log \norm(\mathfrak{p})}{\norm(\mathfrak{p}) - 1} \ssum_{\norm(\mathfrak{l}) \le M/\norm(\mathfrak{p}) \\ (\mathfrak{l}, \mathfrak{f}) = 1} \frac{\mu^2(\mathfrak{p}\mathfrak{l})}{\phi(\mathfrak{p}\mathfrak{l})} \\
		&= \ssum_{\norm(\mathfrak{p}) \le M \\ (\mathfrak{p},\mathfrak{f}) = 1} \frac{\log \norm(\mathfrak{p})}{(\norm(\mathfrak{p}) - 1)^2}
		\ssum_{\norm(\mathfrak{l}) \le M/\norm(\mathfrak{p}) \\ (\mathfrak{l}, \mathfrak{pf}) = 1} \frac{\mu^2(\mathfrak{l})}{\norm(\mathfrak{l})} \frac{\norm(\mathfrak{l})}{\phi(\mathfrak{l})}
	\end{aligned}
\]
Lemma~\ref{conrey2} applies to \(\mathcal D_{\mathrm{diag},i}'(M)\), for \(i=1,2\), with \(j=0,1\) and \(g(\mathfrak l)=\mu^2(\mathfrak l)\norm(\mathfrak l)/\phi(\mathfrak l)\) for \((\mathfrak{l}, \mathfrak f) = 1\). Its conductor condition holds with \(B=1/\theta\), since \(\norm(\mathfrak f)=M^{1/\theta}\). To bound the inner sum in \(-\mathcal D_{\mathrm{diag},3}'(M)\) uniformly in \(\mathfrak p\), drop the coprimality condition. The identity \(\norm(\mathfrak l)/\phi(\mathfrak l)=\sum_{\mathfrak d\mid\mathfrak l}\mu^2(\mathfrak d)/\phi(\mathfrak d)\) and the ideal-counting estimate give, for \(x\ge1\),
\[
\sum_{\norm(\mathfrak l)\le x}\frac{\mu^2(\mathfrak l)}{\phi(\mathfrak l)}
\le\sum_{\norm(\mathfrak d)\le x}\frac{\mu^2(\mathfrak d)}{\norm(\mathfrak d)\phi(\mathfrak d)}
\sum_{\norm(\mathfrak a)\le x/\norm(\mathfrak d)}\frac1{\norm(\mathfrak a)}
\ll_K\log(2x).
\]
The sum over \(\mathfrak{d}\) evidently converges. The outer sum over \(\mathfrak{p}\) also converges, so we deduce that \(\mathcal D_{\mathrm{diag},3}'(M) \ll \log(M) \). For the first two sums, Lemma~\ref{conrey2} gives
\[
	\begin{aligned}
		\mathcal{D}_{\mathrm{diag},1}'(M)
		&= \gamma_{-1}(K) \frac{\phi(\mathfrak{f})}{\norm(\mathfrak{f})} \log(M) + D(\mathfrak{f}) + \mathsf{O}(1), \\
		\mathcal{D}_{\mathrm{diag},2}'(M)
		&= \gamma_{-1}(K) \frac{\phi(\mathfrak{f})}{\norm(\mathfrak{f})} \frac{\log^2(M)}{2} + D(\mathfrak{f}) \log(M) + \mathsf{O}(\log(M)),
	\end{aligned}
\]
where
\[
	D(\mathfrak{f}) = \frac{\phi(\mathfrak{f})}{\norm(\mathfrak{f})} \biggl[\gamma_0(K) + \gamma_{-1}(K)  \biggl(\eta(\mathfrak{f}) + \sum_{\mathfrak{p} \nmid \mathfrak{f}} \frac{\log \norm(\mathfrak{p})}{\norm(\mathfrak{p}) (\norm(\mathfrak{p}) - 1)}  \biggr) \biggr].
\]
We put these estimates into \eqref{eq:D diag simplified expansion}. Upon noting that \( D(\mathfrak{f}) \ll \log_2(M) \), it becomes
\[
	\begin{aligned}
	\mathcal{D}_{\mathrm{diag}}(M)
		&=  h_K^*(\mathfrak{f}) \biggl[ \frac{\log \norm(\mathfrak{f})}{\log(M)} + 1 + \mathsf{O} \biggl(\frac{\log_2^2(M)}{\log(M)}\biggr)  \biggr] + \mathsf{O} \biggl(\frac{h_K^*(\mathfrak{f}) \phi(\mathfrak{f}) \mathcal{E}_{\mathrm{diag}}(M)}{\norm(\mathfrak{f})}\biggr) \\
		&=  h_K^*(\mathfrak{f}) \biggl[ 1 + \frac{1}{\theta} + \mathsf{O} \biggl(\frac{\log_2^2(M)}{\log(M)}\biggr)  \biggr] + \mathsf{O} \biggl(\frac{h_K^*(\mathfrak{f}) \phi(\mathfrak{f}) \mathcal{E}_{\mathrm{diag}}(M)}{\norm(\mathfrak{f})}\biggr),
	\end{aligned}
\]
where \( \frac{1}{\theta} \) arises from \(M=\norm(\mathfrak f)^\theta\). It remains to bound \(\mathcal E_{\mathrm{diag}}(M)\). In \eqref{eq:E diag formula}, drop the squarefree and coprimality conditions and use
\(\sum_{\mathfrak d\mid\mathfrak l}\norm(\mathfrak d)^{-1}\le\norm(\mathfrak l)/\phi(\mathfrak l)\ll_K\log_2(M)\).
The resulting estimate is
\[
\frac{\phi(\mathfrak f)}{\norm(\mathfrak f)}\mathcal E_{\mathrm{diag}}(M)
\ll_{K,\theta}\log_2^2(M)\mathcal E_{\mathrm{diag},1}(M)
+\log(M)\log_2(M)\mathcal E_{\mathrm{diag},2}(M),
\]
where, for \(i=1,2\), we write
\(\mathcal E_{\mathrm{diag},i}(M)=\sum_{\norm(\mathfrak l)\le M}\mathcal E_1(\mathfrak l)^i/\norm(\mathfrak l)\).
Substitute the formula
\[
\mathcal E_1(\mathfrak l)=\frac{\log_2^2(M)}{\log^2(M)}\bigl(1+\log(M)\log_2(M)(\norm(\mathfrak l)/M)^{1/(C\log_2 M)}\bigr),
\]
with \(C=C(K,\theta)>0\) as in Lemma~\ref{lem:conrey-1}. Partial summation using \eqref{eq:ideal-density} gives
\[
\begin{aligned}
\sum_{\norm(\mathfrak l)\le M}\frac1{\norm(\mathfrak l)}&\ll \log(M) ,\\
\sum_{\norm(\mathfrak l)\le M}\frac{(\norm(\mathfrak l)/M)^{j/(C\log_2 M)}}{\norm(\mathfrak l)} &\ll \log_2(M), \qquad j=1,2.
\end{aligned}
\]
Expand \(\mathcal E_1(\mathfrak l)\) for the first estimate and bound its square by twice the sum of the squares of its two terms for the second. We obtain
\[
\begin{aligned}
\mathcal E_{\mathrm{diag},1}(M)
&\ll_{K,\theta}\frac{\log_2^2 M}{\log M}
+\frac{\log_2^4 M}{\log M}
\ll_{K,\theta}\frac{\log_2^4 M}{\log M},\\
\mathcal E_{\mathrm{diag},2}(M)
&\ll_{K,\theta}\frac{\log_2^4 M}{\log^3 M}
+\frac{\log_2^7 M}{\log^2 M}
\ll_{K,\theta}\frac{\log_2^7 M}{\log^2 M}.
\end{aligned}
\]
Consequently, \(\phi(\mathfrak f)\mathcal E_{\mathrm{diag}}(M)/\norm(\mathfrak f)\ll_{K,\theta}\log_2^8(M)/\log M\). Combining this with \eqref{eq:mollified-second-moment-decomposition} proves Proposition~\ref{prop:second mollified moment} in the more precise form
\begin{equation} \label{D asymptotic second moment}
\mathcal{D}(M) = h_K^*(\mathfrak{f}) \biggl[1 + \frac{1}{\theta} + \mathsf{O} \biggl(\frac{\log_2^8(M)}{\log(M)}\biggr)\biggr] + \mathsf{O}(M^{1+\varepsilon} \norm(\mathfrak{f})^{1/2+\varepsilon}).
\end{equation}

\section{Proof of Theorem 1.1} \label{sec:main thm}

We now prove \Cref{thm:main}.  Let
\[
\mathcal N(\mathfrak f)
:=
\#\left\{
\chi\bmod\mathfrak f:
\chi \text{ primitive},\
L\!\left(\frac12,\chi\right)\ne0
\right\}.
\]
By Lemma~\ref{lem:primitive-family-size}, if a prime ideal of norm \(2\)
divides \(\mathfrak f\) exactly once, then the primitive family is empty
and the asserted lower bound is trivial.  We may therefore assume that
this does not occur.  The same lemma then gives
\(h_K^*(\mathfrak f)=\norm(\mathfrak f)^{1-\littleo(1)}\).
By \eqref{C asymptotic first moment}, for \(M=\norm(\mathfrak f)^\theta\)
with \(0<\theta<1/2\), we have
\[
\mathcal C(M)
=
(1+\littleo(1))h_K^*(\mathfrak f).
\]
Indeed, the error terms in \eqref{C asymptotic first moment} are
\[
\bigO\!\left(
\norm(\mathfrak f)^{3\theta/2+1/4+\varepsilon}
+
\norm(\mathfrak f)^{\theta/2+3/4+\varepsilon}
\right),
\]
which are \(\littleo(h_K^*(\mathfrak f))\) for \(\theta<1/2\), by
Lemma~\ref{lem:primitive-family-size} after choosing \(\varepsilon>0\)
sufficiently small.

Similarly, by \eqref{D asymptotic second moment},
\[
\mathcal D(M)
=
\left(1+\frac1\theta+\littleo(1)\right)h_K^*(\mathfrak f).
\]
The off-diagonal error is \(\littleo(h_K^*(\mathfrak f))\) in the same range
\(0<\theta<1/2\), again by
Lemma~\ref{lem:primitive-family-size}.

The Cauchy-Schwarz inequality reads \(|\mathcal C(M)|^2\le\mathcal N(\mathfrak f)\mathcal D(M)\), or
\[
\mathcal N(\mathfrak f)
\ge
\frac{|\mathcal C(M)|^2}{\mathcal D(M)}
=
\left(
\frac{\theta}{1+\theta}+\littleo(1)
\right)
h_K^*(\mathfrak f).
\]
For each fixed \(0<\theta<1/2\), first let \(\norm(\mathfrak f)\to\infty\)
over nonempty primitive families. The preceding inequality gives
\[
\liminf_{\substack{\norm(\mathfrak f)\to\infty\\h_K^*(\mathfrak f)>0}}
\frac{\mathcal N(\mathfrak f)}{h_K^*(\mathfrak f)}
\ge\frac{\theta}{1+\theta}.
\]
Taking the supremum over the fixed values \(0<\theta<1/2\) yields
the lower bound \(1/3\). This proves
\Cref{thm:main}.

\section*{Acknowledgments}
The authors wish to thank Alia Hamieh for the many helpful discussions throughout this project. The second author gratefully acknowledges support from the Mitacs Globalink program during the initial stages.

\bibliography{Hecke}

\bigskip
\noindent\begin{minipage}{\textwidth}
\footnotesize
\textsc{Thurman Ye}\\
Department of Mathematics, University of California, Irvine\\
Irvine, CA 92697-3875, USA\\
\textit{Email address:} \email{thurmay@uci.edu}
\end{minipage}

\medskip
\noindent\begin{minipage}{\textwidth}
\footnotesize
\textsc{Xu Zhuang}\\
Department of Mathematics, University of California, Irvine\\
Irvine, CA 92697-3875, USA\\
\textit{Email address:} \email{xzhuang8@uci.edu}
\end{minipage}
\end{document}